\documentclass[a4paper,12pt]{article}

\usepackage[left=2cm,right=2cm, top=2cm,bottom=3cm,bindingoffset=0cm]{geometry}

\usepackage{verbatim}
\usepackage{amsmath}
\usepackage{amsthm}
\usepackage{amssymb}
\usepackage{delarray}
\usepackage{cite}
\usepackage{hyperref}
\usepackage{mathrsfs}
\usepackage{tikz}
\usetikzlibrary{patterns}
\usepackage{caption}
\DeclareCaptionLabelSeparator{dot}{. }
\captionsetup{justification=centering,labelsep=dot}

\newcommand{\al}{\alpha}
\newcommand{\be}{\beta}

\newcommand{\de}{\delta}
\newcommand{\la}{\lambda}
\newcommand{\om}{\omega}

\newcommand{\eps}{\varepsilon}
\newcommand{\vv}{\varphi}

\theoremstyle{plain}

\numberwithin{equation}{section}

\newtheorem{thm}{Theorem}[section]
\newtheorem{lem}[thm]{Lemma}
\newtheorem{prop}[thm]{Proposition}
\newtheorem{cor}[thm]{Corollary}

\theoremstyle{definition}

\newtheorem{example}[thm]{Example}
\newtheorem{ip}[thm]{Inverse Problem}
\newtheorem{defin}[thm]{Definition}

\theoremstyle{remark}

\newtheorem{remark}[thm]{Remark}

\DeclareMathOperator*{\Res}{Res}
\DeclareMathOperator{\rank}{rank}

\sloppy \allowdisplaybreaks

\begin{document}

\begin{center}
{\Large\bf Uniform stability for the matrix inverse \\[0.2cm] Sturm-Liouville problems}
\\[0.5cm]
{\bf Natalia P. Bondarenko}
\end{center}

\vspace{0.5cm}

{\bf Abstract.} In this paper, the uniform stability of the inverse spectral problem is proved for the matrix Sturm-Liouville operator on a finite interval. Namely, we describe the sets of spectral data, on which the inverse spectral mapping is bounded and, consequently, the uniform estimates hold for the differences of the matrix potentials and of the corresponding coefficients of the boundary conditions. Our approach is based on a constructive procedure for solving the inverse problem by developing ideas of the method of spectral mappings. In addition, we apply our technique to obtain the uniform stability of the inverse Sturm-Liouville problem on the star-shaped graph.

\medskip

{\bf Keywords:} matrix Sturm-Liouville operator; inverse spectral problem; uniform stability; method of spectral mappings; Sturm-Liouville operator on graph.

\medskip

{\bf AMS Mathematics Subject Classification (2020):} 34A55 34B09 34B45 34L40 

\section{Introduction} \label{sec:intr}

Consider the matrix Sturm-Liouville equation
\begin{equation} \label{eqv}
-Y'' + Q(x) Y = \la Y, \quad x \in (0,\pi),
\end{equation}
with the boundary conditions
\begin{equation} \label{bc}
Y'(0) - h Y(0) = 0, \quad Y'(\pi) + H Y(\pi) = 0,
\end{equation}
where $Y = Y(x)$ is a column vector function of size $m$ ($m \ge 1$), $Q(x)$ is a complex-valued $(m \times m)$ matrix function with elements of $L_2(0,\pi)$, which is called the \textit{potential}, $\la$ is the spectral parameter, $h$ and $H$ are constant matrices of $\mathbb C^{m \times m}$. Assume that
\begin{equation} \label{sa}
Q^*(x) = Q(x) \:\: \text{a.e. on} \:\: (0,\pi), \quad h = h^*, \quad H = H^*,
\end{equation}
where the symbol ``$^*$'' denotes the conjugate transpose. Then, the problem \eqref{eqv}--\eqref{bc} is self-adjoint. 

In this paper, we focus on the inverse spectral problem, which consists in the recovery of $Q(x)$, $h$, and $H$ from the eigenvalues $\{ \la_u \}$ and the norming vectors $\{ v_u \}$, $v_u = Y_u(0)$, where $\{ Y_u(x) \}$ are orthonormal eigenfunctions of the problem \eqref{eqv}--\eqref{bc} (see Section~\ref{sec:prelim} for more details). The uniqueness of solution and the spectral data characterization for this problem follow from the previously known results \cite{Yur06, Bond11, Bond19}. The main goal of this paper is to prove the uniform stability for this inverse problem, which is of fundamental novelty for the matrix Sturm-Liouville operators.

The most complete results in the inverse problem theory were obtained for the scalar ($m = 1$) Sturm-Liouville operators (see, e.g., the monographs \cite{Mar77, Lev84, PT87, FY01, Krav20} and references therein). Matrix inverse Sturm-Liouville problems ($m > 1$) have a long history. It began with the study of inverse scattering problems by Agranovich and Marchenko~\cite{AM63} on the half-line and by Wadati and Kamijo~\cite{WK74} on the line. In the finite interval case, the uniqueness of recovering the matrix Sturm-Liouville potential from various types of spectral characteristics (Weyl matrix, several spectra, monodromy matrix, eigenvalues and weight matrices) has been investigated by Carlson~\cite{Car02}, Chabanov~\cite{Chab04}, Malamud~\cite{Mal05}, Yurko~\cite{Yur06}, and other scholars. In contrast to the scalar case, the matrix problem \eqref{eqv}--\eqref{bc} may have an infinite number of multiple or asymptotically multiple eigenvalues, which significantly complicates the analysis of inverse spectral problems.

The necessary and sufficient conditions for solvability of the matrix inverse Sturm-Liouville problems on a finite interval have been obtained in \cite{CK09, MT10, Bond11} in parallel by different methods. Specifically, Chelkak and Korotyaev \cite{CK09} have given the spectral data characterization in the case of asymptotically simple spectrum by developing the approach of Trubowitz and his co-authors (see~\cite{PT87}). That method is based on the ideas of Borg~\cite{Borg46} for local solution of an inverse problem and on transforms that change a finite amount of spectral data to achieve a global solution. For the matrix case, such transforms were constructed by Chelkak and Korotyaev in \cite{CK06}. 

Mykytyuk and Trush~\cite{MT10} reduced the matrix Sturm-Liouville equation with Miura potential to a Dirac-type system and then applied the Krein accelerant method \cite{Krein56}. Later on, analogous reduction was applied by Eckhardt et al \cite{Eckh14, Eckh15} to the matrix Sturm-Liouville operators on the half-line and on the line. It is also worth mentioning that Mykytyuk and Puyda \cite{MP12, Puy13} deduced the spectral data characterization for the matrix Dirac-type operators. 

The third approach of Bondarenko \cite{Bond11} develops the method of spectral mappings \cite{FY01, Yur02}, which was originally created for higher-order differential operators (see \cite{Leib66, Yur92}).
This method can be used for reducing various types of inverse spectral problems to linear equations in suitable Banach spaces. The central place in the method of spectral mappings is taken by contour integration of some meromorphic functions (``spectral mappings'') in the complex plane of the eigenparameter. The first steps in applying this approach to the matrix Sturm-Liouville problem \eqref{eqv}--\eqref{bc} was implemented by Yurko \cite{Yur06}. However, the results of \cite{Yur06} are limited to uniqueness theorems and a reconstruction algorithm for the case of simple eigenvalues. Bondarenko has solved the matrix inverse problem in the general case of arbitrary behavior of the spectrum, including multiple and asymptotically multiple eigenvalues. The results of \cite{Bond11}, which were subsequently improved in \cite{Bond19}, contain spectral data characterization for the self-adjoint and non-self-ajoint problems of form \eqref{eqv}--\eqref{bc}. Later on, this approach was developed for the Sturm-Liouville problems with general self-adjoint boundary conditions, including the case of singular potentials in the class of matrix functions $W_2^{-1}(0,\pi)$ (see \cite{Bond20-ipse, Bond20-amp, Bond21-mn, Bond21-amp}).

Inverse problems for the matrix Sturm-Liouville operators have a variety of applications. Such problems arise in quantum mechanics \cite{AM63} and elasticity theory \cite{BHN95}. Furthermore,  solutions of matrix inverse Sturm-Liouville problems have been used for description of electromagnetic waves \cite{BS95}, for
reconstructing the coefficients of the linear reaction diffusion system~\cite{Boum17}, and for integration of matrix nonlinear evolution equations by the inverse scattering transform (see the classical work \cite{CD77} and some recent studies \cite{DM23, KUB25} on this topic). The application, which is mostly related to this paper, is concerned with quantum graphs.

Differential operators on graphs, also called quantum graphs, are given by differential expressions on the edges of a metric graph and by matching conditions in the vertices. Such models are applied in science and engineering for describing physical processes in spacial networks. Basic theory of quantum graphs and their applications are presented, e.g., in the monographs \cite{PPP04, BCFK06, BK13, Kur24}. Various approaches to inverse problems on metric graphs were developed in \cite{Bel04, Now07, ALM10, Yur16} and other studies. In particular, one of the existing approaches consists in the representation of differential equations on graphs in the matrix form. This idea was applied by Harmer \cite{Har02, Har05} to the inverse scattering problem on the star-shaped graph with infinite edges, which is reduced to the matrix Sturm-Liouville problem on the half-line with the general self-adjoint boundary condition. The latter inverse problem has been extensively studied by Aktosun and Weder \cite{AW21}. Moreover, Xu and Bondarenko \cite{XB22} have recently proved its stability. The mentioned studies \cite{Har02, Har05, AW21, XB22} rely on the methods of Marchenko \cite{Mar77, AM63} and deal with a finite discrete spectrum. Thus, in this context, the case of the half-line is simpler than the finite interval case, which is concerned with a countable set of eigenvalues. Matrix inverse Sturm-Liouville problems with general self-adjoint boundary conditions started to be investigated in \cite{CM13, Xu19}, but those results were limited to uniqueness theorems. Later, Bondarenko \cite{Bond20-ipse} has developed a constructive procedure to find a solution based on the method of spectral mappings. This approach allowed her to get the spectral data characterization for the Sturm-Liouville operator on the star-shaped graph and for a more general matrix Sturm-Liouville operator with non-diagonal potential (see \cite{Bond20-amp}). Furthermore, those results were generalized to distributional potentials and to graphs of arbitrary geometric structure with rationally dependent edge lengths (see \cite{Bond21-mn, Bond21-amp}).


In recent years, significant progress has been achieved in the investigation of the uniform stability for inverse spectral problems. In particular, Savchuk and Shkalikov \cite{SS10, SS13} have proved the unconditional uniform stability of the inverse Sturm-Liouville problems with potentials in the scale of the Sobolev spaces $W_2^{\theta}$, $\theta > -1$. The case $\theta = -1$ was studied by Hryniv \cite{Hryn11-1} by another method.
Their fundamental achievement consists in describing the sets in the space of spectral data, on which the inverse spectral mapping is bounded and, consequently, the stability estimates are uniform. This issue is quite non-trivial, since the inverse problem is nonlinear and the spaces of spectral data and of the potentials are infinite dimensional. The results of \cite{SS10, SS13} allowed Savchuk and Shkalikov to deduce the stability estimates for approximations of the potential by finite spectral data (see \cite{SS14, Sav16}). Such estimates are especially important for practical applications since only a finite amount of spectral data is usually available in practice. Furthermore, the uniform stability of inverse problems was studied for the Dirac system \cite{Hryn11-2}, for several classes of nonlocal operators \cite{But21, BD22, Kuz23}, for non-self-adjoint Sturm-Liouville operators \cite{GMXA23, Bond24}, for the half-inverse Sturm-Liouville problem \cite{Bond25-mn}, and for the Sturm-Liouville problem with rational Herglotz-Nevanlinna functions of the eigenparameter in the boundary conditions \cite{Bond25-jmp}. However, for the matrix Sturm-Liouville operators, the question about constraints on spectral data that guarantee the uniform stability of the inverse problem was open. 


Unconditional uniform stability of inverse problems on graphs, to the best of the author's knowledge, was also not investigated before.
Some local stability results were obtained in \cite{Bond20-ipse, Bond18, Bond21-mmas}. In addition, the conditional uniform stability (i.e. with restrictions on the potentials) has been proved for inverse problems on graphs with a loop \cite{MT19, Bond25-jiip} and on arbitrary compact trees \cite{Bond25-sam}. 
We also mention that Buterin \cite{But23} obtained the uniform stability of the inverse problem on a graph for a functional-differential operator, which is nonlocal and so fundamentally different from differential operators.
At the same time, numerical methods for solving inverse spectral problems on quantum graphs are actively developing (see \cite{AK23, AKK24}). This indicates the importance of studying the stability of such problems.

In this paper, the matrix Sturm-Liouville operator given by \eqref{eqv}--\eqref{bc} is studied. We aim to find the constraints on the spectral data that guarantee the uniform stability of the inverse spectral problem. For this purpose, we first investigate the direct problem that consists in determining the eigenvalues $\{ \la_u \}$ and the norming vectors $\{ v_u \}$ by the problem parameters $Q$, $h$, and $H$. We describe the spectral image of the ball $\| Q \|_{L_2} + \| h \| + \| H \| \le R$ in Theorem~\ref{thm:bounddir}.     An important role in our study is played by the vector functional sequence $\{ v_u \cos(\sqrt \la_u x) \}$, whose completeness is crucial in the spectral data characterization (see \cite{Bond19}). We show that, under the uniform boundedness of $Q$, $h$, and $H$, the sequences $\{ v_u \cos(\sqrt \la_u x) \}$ form a uniformly bounded family of Riesz bases. The proof is based on the limiting approach and on using the compact embedding $W_2^1[0,\pi] \subset W_2^{\al}[0,\pi]$, $\al < 1$, by developing the ideas of \cite{SS10}.

Next, we prove Theorem~\ref{thm:boundinv} on the uniform boundedness of the inverse problem for spectral data on the set $\mathcal S_{\Omega, \eps}$ described by the assertions of Theorem~\ref{thm:bounddir}. We rely on the ideas of the method of spectral mappings \cite{FY01, Yur02}, which has been developed for the matrix case in \cite{Yur06, Bond11, Bond19} and subsequent studies. The inverse problem is reduced to a linear main equation in the Banach space of infinite matrix sequences.
However, it is  inconvenient to use the construction of the main equation from \cite{Bond11, Bond19} for investigation of the uniform stability. Therein, the dependence of the operator and the right-hand side of the main equation on the spectral data is not continuous. Therefore, we apply the modification of the main equation that has been introduced in \cite{Bond24} for the scalar case to achieve the continuity. Within the framework of our method, the main equation is essential for proving the uniform boundedness of the inverse problem. Namely, we show that, if spectral data lie in $S_{\Omega,\eps}$, then the inverse operator from the main equation is uniformly bounded. For the proof, we pass to a weak limit $\{ \la_u, v_u \}$ of a sequence $\{ \la_u^{(p)}, v_u^{(p)} \}$ in $S_{\Omega,\eps}$ as $p \to \infty$. The limit falls out of the spectral data class corresponding to matrix potentials $Q \in L_2$. Nevertheless, we show that it corresponds to some matrix potential of $W_2^{-1}$ and rely on the inverse problem theory for such operators constructed in \cite{Bond21-mn, Bond21-amp}.

The main theorem on the uniform stability (Theorem~\ref{thm:unistab}) is proved by applying the uniform boundedness (Theorem~\ref{thm:boundinv}) and using the reconstruction formulas for the potential $Q$ and the coefficients $h$ and $H$ of the boundary conditions. A feature of our approach is partitioning the eigenvalues of two problems into groups and estimating the differences of two potentials in terms of some spectral data combinations in accordance with this partition. This technique expands the applications of our results, since in the matrix case multiple eigenvalues can split under small perturbations of the spectrum. Moreover, even for two problems having only simple eigenvalues, a non-trivial partition of their eigenvalues can be needed for their comparison. We provide a series of examples to illustrate our approach. In particular, we prove the convergence of finite spectral data approximations to $Q$, $h$, and $H$.

Furthermore, our technique is applied to the inverse Sturm-Liouville problem on the star-shaped graph. In order to maintain continuity with the previous studies \cite{Bond20-ipse, Bond20-amp}, we consider the operator with the Dirichlet boundary conditions and with the standard $\de$-type matching conditions at the internal vertex. The corresponding boundary value problem is represented in the matrix form with the Dirichlet boundary condition at $x = 0$ and the self-adjoint boundary condition of general type at $x = \pi$. Although this problem differs from \eqref{eqv}--\eqref{bc}, our method for studying the uniform stability is transferred to the graph case with a number of technical modifications. As a result, we get the uniform boundedness for the direct and inverse spectral problems, as well as the unconditional uniform stability of the inverse problem for the Sturm-Liouville operator on the star-shaped graph. To the best of the author's knowledge, the results of this paper are first of this kind for differential operators on metric graphs.

The paper is organized as follows. Section~\ref{sec:prelim} contains the inverse problem statement, theorems on uniqueness and spectral data characterization, and other preliminaries. Section~\ref{sec:main} presents the main results for the problem \eqref{eqv}--\eqref{bc}. They include uniform bounds for solutions of the direct and inverse spectral problems (Theorems~\ref{thm:bounddir} and~\ref{thm:boundinv}, respectively) and the uniform stability for the inverse problem (Theorem~\ref{thm:unistab}). In Section~\ref{sec:uchar}, we deduce the uniqueness of recovering $Q$, $h$, and $H$ from $\{ \la_u, v_u \}$ and characterization for this kind of spectral data from the results of the previous studies \cite{Yur06, Bond11, Bond19}. In Section~\ref{sec:bounddir}, the proof of Theorem~\ref{thm:bounddir} is given. In Section~\ref{sec:maineq}, we derive the main equation of the inverse problem in a suitable Banach space of infinite matrix sequences. Section~\ref{sec:boundinv} contains the proof of Theorem~\ref{thm:boundinv} on the uniform boundedness of the inverse problem. In Section~\ref{sec:unistab}, we prove the uniform stability theorem and illustrate it with several examples. In Section~\ref{sec:gen}, some technical restriction is removed, and the results for the general case are obtained. Finally, in Section~\ref{sec:graph}, we apply our technique to the inverse Sturm-Liouville problem on the star-shaped graph.

\section{Preliminaries} \label{sec:prelim}

Throughout the paper, we use the following \textbf{notations}:
\begin{itemize}
\item $\mathbb C^m$ is the unitary space of complex-valued column vectors of size $m$.
\item $\mathbb C^{m \times m}$ is the space of complex-valued matrices of size $(m \times m)$ with the operator norm $\| A \| = s_{max}(A)$, where $s_{max}(A)$ is the maximal singular value of a matrix $A$.
\item $I_m$ is the $(m \times m)$ unit matrix, $0_m$ is the $(m \times m)$ zero matrix.
\item $L_2((0,\pi); \mathbb C^m)$ is the Hilbert space of complex-valued $m$-vector functions with elements of $L_2(0,\pi)$ equipped with the scalar product 
$$
(Y, Z) = \sum_{j = 1}^m \int_0^{\pi} \overline{y_j(x)} z_j(x) \, dx, \quad Y = [y_j(x)]_{j = 1}^m, \quad Z = [z_j(x)]_{j = 1}^m, \quad Y, Z \in L_2((0,\pi); \mathbb C^m).
$$
and the norm $\| Y \|_{L_2} = \sqrt{(Y, Y)}$.
\item $L_2((0,\pi); \mathbb C^{m \times m})$ is the Banach space of complex-valued $(m \times m)$ matrix functions with elements of $L_2(0, \pi)$ equipped with the norm
$$
\| A \|_{L_2} := \sum_{j,k = 1}^m \| a_{jk} \|_{L_2}, \quad A = [a_{jk}]_{j,k = 1}^m \in L_2((0,\pi); \mathbb C^{m \times m}).
$$
\item More generally, if $\mathcal I$ is an interval on the real line and $\mathcal M(\mathcal I)$ is a Banach space of functions on $\mathcal I$, then the notation $\mathcal M(\mathcal I; \mathbb C^{m \times m})$ is used for the Banach space of the complex-valued matrix functions with elements of $\mathcal M(\mathcal I)$ equipped with the norm
$$
\| A \|_{\mathcal M(\mathcal I; \mathbb C^{m \times m})} := \sum_{j,k = 1}^m \| a_{jk} \|_{\mathcal M(\mathcal I)}, \quad A = [a_{jk}]_{j,k = 1}^m.
$$
\item $l_2(\mathbb C^{m \times m})$ is the space of matrix sequences
$K = \{ K_n \}$, $K_n \in \mathbb C^{m \times m}$, $n \ge 1$, satisfying the condition
$$
\| K \|_{l_2} = \sqrt{\sum_{n = 1}^{\infty} \| K_n \|^2} < \infty.
$$
\item We use the same notation $\| . \|_{L_2}$ for the norms in $L_2(0,\pi)$, $L_2((0,\pi); \mathbb C^m)$, and $L_2((0,\pi); \mathbb C^{m \times m})$ and the same notation $\| . \|_{l_2}$ for the norms in $l_2(\mathbb C^{m \times m})$ and in the space $l_2$ of scalar sequences, since it does not cause ambiguity.
\item $\overline{j,k}$ means $j, j+1, \dots, k$.
\item On the set of integer pairs 
\begin{equation} \label{defJ}
J := \bigl\{ (n, k) \colon n \in \mathbb N, \, k = \overline{1,m} \bigr\},
\end{equation}
we define the order
$$
(n_1, k_1) < (n_2, k_2) \quad \stackrel{def}{\Leftrightarrow} \quad n_1 < n_2 \:\: \text{or} \:\: (n_1 = n_2 \:\: \text{and} \:\: k_1< k_2).
$$
\item For sequences of form $\{ \la_{nk} \}_{n \ge 1, \, k = \overline{1,m}}$, we use a shorter notation $\{ \la_u \}_J$, $u = (n,k)$.
\item Denote by $\mathcal L = \mathcal L(Q, h, H)$ the boundary value problem \eqref{eqv}--\eqref{bc}. Along with $\mathcal L$, we also consider the boundary value problems $\tilde{\mathcal L} = \mathcal L(\tilde Q, \tilde h, \tilde H)$ and $\mathcal L_p = \mathcal L(Q_p, h_p, H_p)$ ($p \ge 1$) of the same form but with different coefficients. If a symbol $\gamma$ denotes an object related to $\mathcal L$, then the symbols $\tilde \gamma$ and $\gamma_p$ will denote the analogous objects related to $\tilde{\mathcal L}$ and $\mathcal L_p$, respectively, and $\hat \gamma := \gamma - \tilde \gamma$.
If a notation has lower indices (e.g., $\la_{nk}$), then we use the upper index $^{(p)}$ (e.g., $\la_{nk}^{(p)}$). 
\item In the estimates, we use the notations $C(A_1, A_2, \dots)$ and $c(A_1, A_2, \dots)$ for various positive constants depending only on $A_1$, $A_2$, \dots (e.g., $C(R)$, $c(\Omega, \eps)$).
\item $\rho = \sqrt{\la}$, $\rho_{nk} = \sqrt{\la_{nk}}$, where the square root branch is chosen so that $\arg \rho \in (-\frac{\pi}{2}, \frac{\pi}{2}]$.
\end{itemize}

Let us consider the boundary value problem $\mathcal L$ of form \eqref{eqv}--\eqref{bc} under the additional restriction
\begin{equation} \label{omega0}
h + H + \frac{1}{2} \int_0^{\pi} Q(x) \, dx = 0_m,
\end{equation}
which allows us to simplify the technique and to make formulations of the main results clearer. Removing this restriction is discussed in Section~\ref{sec:gen}.

It is well-known that the matrix Sturm-Liouville problem \eqref{eqv}--\eqref{bc} has a countable set of real eigenvalues. Let us describe their properties due to \cite{Bond11}. It is convenient to number the eigenvalues as $\{ \la_{nk} \}_{n \ge 1, \, k = \overline{1,m}}$ according to their asymptotics 
\begin{equation} \label{asymptla}
\rho_{nk} := \sqrt{\la_{nk}} = n - 1 + \frac{\varkappa_{nk}}{n}, \quad \{ \varkappa_{nk} \} \in l_2, \quad n \ge 1, \quad k = \overline{1,m}.
\end{equation}
Without loss of generality, we assume that $\la_u \le \la_v$ if $u < v$, $u,v \in J$, where $J$ is defined by \eqref{defJ}. Note that eigenvalues can be multiple. Since the problem $\mathcal L$ is self-adjoint, algebraic multiplicities of eigenvalues are equal to the corresponding geometric multiplicities and do not exceed $m$. 
A number of occurrences of each eigenvalue in
the sequence $\{ \la_u \}_J$ equals its multiplicity.

Denote by $\varphi(x, \la)$ the $(m \times m)$-matrix solution of equation \eqref{eqv} satisfying the initial conditions
\begin{equation} \label{icvv}
\vv(0, \la) = I_m, \quad \vv'(0,\la) = h.
\end{equation}

The vector eigenfunctions of the boundary value problem $\mathcal L$ form an orthonormal basis $\{ Y_u(x) \}_J$ in $L_2((0,\pi); \mathbb C^m)$. Obviously, the eigenfunctions they can be represented as $Y_u(x) = \vv(x, \la_u) v_u$, where $v_u = Y_u(0)$ are vectors of $\mathbb C^m$. Let us call $\{ v_u \}_J$ \textit{norming vectors}. Clearly, the vectors $\{ v_u \}_J$ can be chosen non-uniquely. We work with any fixed sequence of norming vectors that corresponds to an orthonormal basis of eigenfunctions.

We call $\{ \la_u, v_u \}_J$ \textit{the spectral data} of the problem $\mathcal L$. Consider the following inverse spectral problem.

\begin{ip} \label{ip:1}
Given the spectral data $\{ \la_u, v_u \}_J$, find $Q$, $h$, and $H$.
\end{ip}

In the scalar case $m = 1$, the norming vectors turn into the norming constants 
$|v_n|^2 = \left( \int_0^{\pi} \vv^2(x, \la_n) \, dx \right)^{-1}$.
Thus, Inverse Problem~\ref{ip:1} generalizes to the matrix case the classical inverse problem of Marchenko \cite{Mar50}, Gelfand and Levitan \cite{GL51} by spectral function. For the case of a finite interval, the step-like spectral function is specified by the eigenvalues and the norming constants (or weight numbers).

\begin{example} \label{ex:zero}
Let $Q(x) \equiv 0_m$, $h = H = 0_m$. Then $\la_{nk} = (n-1)^2$, $n \ge 1$, $k = \overline{1,m}$. Thus, the spectrum of the matrix Sturm-Liouville operator coincides with the one of the scalar Sturm-Liouville problem $-y'' = \la y$, $x \in (0,\pi)$, with the Neumann boundary conditions $y'(0) = y'(\pi) = 0$, but each eigenvalue has multiplicity $m$. Furthermore, $\vv(x, \la) = \cos(\sqrt{\la}x) I_m$ and
$$
   v_{1k} = \sqrt{\frac{1}{\pi}} e_k, \quad v_{nk} = \sqrt{\frac{2}{\pi}} e_k, \:\: n \ge 2, \quad k = \overline{1,m},
$$
where $\{ e_k \}_{k = 1}^m$ is any orthonormal basis in $\mathbb C^m$.
\end{example}

The uniqueness theorem for Inverse Problem~\ref{ip:1} follows from the results of \cite{Yur06} (see Section~\ref{sec:uchar} for details):

\begin{thm} \label{thm:uniq}
The spectral data $\{ \la_u, v_u \}_J$ uniquely specify $Q$, $h$, and $H$.
\end{thm}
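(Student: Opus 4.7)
The plan is to reduce Theorem~\ref{thm:uniq} to the uniqueness theorem of \cite{Yur06}, which is formulated in terms of eigenvalues together with \emph{weight matrices} rather than norming vectors. For every distinct eigenvalue $\mu$ of $\mathcal L$, introduce the index set $I(\mu) := \{ (n,k) : \la_{nk} = \mu \}$ and define
$$
\alpha(\mu) := \sum_{(n,k) \in I(\mu)} v_{nk} v_{nk}^*,
$$
which is a non-negative Hermitian matrix in $\mathbb C^{m \times m}$ whose rank equals the geometric multiplicity of~$\mu$. The sequence $\{ (\mu, \alpha(\mu)) \}$ will be the intermediate object on which the two formulations of the inverse problem are matched.

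The first step is to check that $\alpha(\mu)$ depends only on $\mu$ and not on the particular choice of orthonormal basis of eigenfunctions used to define the norming vectors. If $\{ Y_{nk} \}_{(n,k) \in I(\mu)}$ and $\{ Y'_{nk} \}_{(n,k) \in I(\mu)}$ are two orthonormal bases of the $\mu$-eigenspace, they are related by a unitary matrix $U = [U_{kj}]$, and the norming vectors transform as $v'_{nk} = \sum_j U_{kj} v_{nj}$. Using $\sum_k U_{kj} \overline{U_{kl}} = \de_{jl}$, a direct computation gives $\sum_k v'_{nk} (v'_{nk})^* = \sum_k v_{nk} v_{nk}^*$, so $\alpha(\mu)$ is indeed determined by the spectral data $\{ \la_{nk}, v_{nk} \}$ alone.

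The second step is to invoke the uniqueness theorem from \cite{Yur06}, which asserts that the sequence of pairs $\{ (\mu, \alpha(\mu)) \}$ uniquely specifies $Q$, $h$, and $H$. I would only sketch the idea behind that result rather than reproduce it: one reconstructs the Weyl matrix $M(\la)$ of $\mathcal L$ as a meromorphic function whose poles are the eigenvalues of $\mathcal L$ and whose residues at $\mu$ are expressible through $\alpha(\mu)$; standard asymptotics of $M(\la)$ at infinity and a Mittag--Leffler-type expansion then recover $M(\la)$ from $\{ (\mu, \alpha(\mu)) \}$, after which the classical uniqueness of recovery from the Weyl matrix applies. Combining these two steps yields Theorem~\ref{thm:uniq}.

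The main conceptual obstacle is the matrix-specific phenomenon of multiple and asymptotically multiple eigenvalues, which allows the individual norming vectors $v_{nk}$ to be non-unique. The weight-matrix formulation neatly absorbs this ambiguity, since $\alpha(\mu)$ aggregates all norming vectors at a common eigenvalue into a single basis-independent object, and the reconstruction procedure in \cite{Yur06} is carried out uniformly in the multiplicities. Thus the nontrivial content reduces to the invariance argument of the previous paragraph, and the rest of the proof is a citation.
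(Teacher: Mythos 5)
Your overall strategy is the same as the paper's: pass from the norming vectors to the eigenvalue--weight-matrix data and cite the uniqueness theorem of \cite{Yur06} (Proposition~\ref{prop:uniqw} in the paper). Your unitary-invariance computation showing that $\alpha(\mu)=\sum_{(n,k)\in I(\mu)}v_{nk}v_{nk}^*$ does not depend on the choice of orthonormal eigenbasis is correct and is exactly the final step of the paper's Lemma~\ref{lem:relval} (there phrased as $U_{nk}=V_{nk}S_{nk}$ with $S_{nk}$ unitary, hence $U_{nk}U_{nk}^*=V_{nk}V_{nk}^*$).

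The gap is in your second step. The theorem of \cite{Yur06} is stated for the weight matrices $\al_{nk}:=\Res_{\la=\la_{nk}}M(\la)$, i.e.\ residues of the Weyl matrix, not for your $\alpha(\mu)$. To apply it you must prove the identity
$$
\Res_{\la=\mu}M(\la)=\sum_{(n,k)\in I(\mu)}v_{nk}v_{nk}^*,
$$
and you only assert this inside your sketch of the cited result (``residues \dots expressible through $\alpha(\mu)$''), which begs the question: the whole point is to show that the residue is \emph{equal to} the sum of outer products of norming vectors coming from an orthonormal eigenbasis. This is the nontrivial content of the paper's Lemma~\ref{lem:relval}. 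Its proof is not just the unitary-invariance step: it uses the structural properties \eqref{structal} of $\al_{nk}$ to write $\al_{nk}=\sum u_{ls}u_{ls}^*$ with mutually orthogonal vectors, then invokes the orthogonality relations \eqref{al1}--\eqref{al2} from \cite[Lemma~5]{Bond11} together with $(\vv'(\pi,\la_{nk})+H\vv(\pi,\la_{nk}))\al_{nk}=0_m$ to conclude that $\{\vv(x,\la_{ls})u_{ls}\}$ is itself an orthonormal system of eigenfunctions, and only then compares it with the given basis via a unitary matrix. Without this identification your reduction to \cite{Yur06} does not close; with it, your argument becomes the paper's proof.
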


Next, basing on the results of the previous studies \cite{Bond11, Bond19}, we obtain Theorem~\ref{thm:char} below on the spectral data characterization for the problem $\mathcal L$.

Denote by $\mathcal P$ the set of the triples 
\begin{equation} \label{classP}
(Q, h, H) \in L_2((0,\pi); \mathbb C^{m \times m}) \times \mathbb C^{m \times m} \times \mathbb C^{m \times m}
\end{equation}
satisfying \eqref{sa} and \eqref{omega0}.

Denote by $\mathcal S$ the set of sequences $\{ \la_u, v_u \}_J$ such that $\la_u \in \mathbb R$, $\la_{u_1} \le \la_{u_2}$ for $u_1 < u_2$, and $v_u \in \mathbb C^m$, $u \in J$.

\begin{thm} \label{thm:char}
For a sequence $\{ \la_u, v_u \}_J \in \mathcal S$ to be the spectral data of the problem 
\eqref{eqv}--\eqref{bc} with $(Q, h, H) \in \mathcal P$, the following conditions are necessary and sufficient:
\begin{enumerate}
\item Asymptotics \eqref{asymptla} and
\begin{equation} \label{asymptV} 
V^*_n V_n =  \frac{2}{\pi} I_m + \frac{K_n}{n}, \quad \{ K_n \} \in l_2(\mathbb C^{m \times m}), \quad n \ge 1, 
\end{equation}
where $V_n$ is the $(m \times m)$ matrix composed of the vectors $\{ v_{nk} \}_{k = 1}^m$.
\item The sequence $\{ v_u \cos (\rho_u x) \}_J$ is complete in $L_2((0,\pi); \mathbb C^m)$.
\end{enumerate}
\end{thm}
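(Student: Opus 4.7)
The plan is to deduce the theorem as a reformulation of the characterization results already established in \cite{Bond11, Bond19}, which were stated in terms of weight matrices rather than norming vectors. So the main work is to set up a dictionary between the two kinds of spectral data and to verify that, under the normalization \eqref{omega0}, the two sets of conditions match.

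For the necessity direction, assume $(Q, h, H) \in \mathcal P$ and let $\{\la_{nk}, v_{nk}\}$ be the spectral data. The asymptotics \eqref{asymptla} are standard for the matrix Sturm-Liouville operator: expanding the characteristic function of \eqref{eqv}--\eqref{bc} and using the identity \eqref{omega0} kills the $1/n$ correction in the leading term, leaving $\rho_{nk} = n-1 + \varkappa_{nk}/n$ with $\{\varkappa_{nk}\} \in l_2$. For \eqref{asymptV}, I would exploit that $\{Y_{nk}\} = \{\vv(x, \la_{nk}) v_{nk}\}$ is orthonormal, and substitute the standard asymptotic $\vv(x, \la_{nk}) = \cos(\rho_{nk} x) I_m + O(1/n)$ into $(Y_{nk}, Y_{nl}) = \de_{kl}$. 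Integrating $\cos(\rho_{nk} x)\cos(\rho_{nl} x)$ and collecting error terms (uniformly in $k,l$ because the relevant $\rho$'s are clustered near $n-1$) gives $V_n^* V_n = (2/\pi) I_m + K_n/n$ with $\{K_n\} \in l_2$. Completeness of $\{v_{nk}\cos(\rho_{nk}x)\}$ then follows from completeness of $\{Y_{nk}\}$ via a Bari-type perturbation: the discrepancy $Y_{nk}(x) - v_{nk}\cos(\rho_{nk}x)$ is summable-small in an appropriate norm, and \eqref{asymptV} prevents $V_n$ from degenerating, so the transition is a compact perturbation of a basis.

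For the sufficiency direction, I would take a sequence satisfying the two conditions and translate it into the hypotheses of the characterization theorem of \cite{Bond19}. Specifically, the natural object in the method of spectral mappings is the weight-matrix type data associated with blocks of equal eigenvalues; grouping the $\{v_{nk}\}_{k=1}^m$ into $V_n$ and passing to $V_n V_n^*$ (or the appropriate projector-weighted version when eigenvalues within a block coincide) gives exactly the data handled in \cite{Bond11, Bond19}. The asymptotics \eqref{asymptla}--\eqref{asymptV} match termwise the conditions there, and the cosine-family completeness is precisely the hypothesis used to guarantee invertibility of the main equation. Applying the reconstruction algorithm of \cite{Bond11, Bond19} produces a triple $(Q, h, H)$ satisfying \eqref{sa}, and tracing the first trace formula (which expresses $h + H + \tfrac{1}{2}\int_0^\pi Q$ through the leading-order correction to $\rho_{nk}$) together with \eqref{asymptla} forces \eqref{omega0}, so $(Q, h, H) \in \mathcal P$.

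The main technical obstacle will be handling the non-uniqueness of norming vectors and the possibly non-uniform block structure of multiple eigenvalues. The $v_{nk}$ within a multiple-eigenvalue block are defined only up to a unitary rotation, so the characterization cannot be expressed in terms of individual $v_{nk}$ but only through unitarily invariant quantities like $V_n^* V_n$ and $V_n V_n^*$. Moreover, under a small perturbation the eigenvalue clusters around $n-1$ may split differently in two problems, and grouping them coherently is delicate. This is why the hypothesis is stated as a condition on $V_n^* V_n$ asymptotically; the proof of sufficiency must show that once such a $V_n$ is fixed, the reconstructed operator is independent of the unitary choice, which follows because the relevant projector $V_n (V_n^* V_n)^{-1} V_n^*$ is intrinsic to the data.
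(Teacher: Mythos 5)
Your overall strategy --- reduce the statement to the weight-matrix characterization of \cite{Bond11, Bond19} via a dictionary between norming vectors and weight matrices --- is exactly the route the paper takes (Proposition~\ref{prop:charw} combined with Lemmas~\ref{lem:relval} and~\ref{lem:asymptbe}). However, two steps of your plan have genuine gaps. First, the dictionary itself is the substantive content of the proof and you only gesture at it: one must actually prove the identity \eqref{findal}, $\al_{nk} = \sum_{\la_{ls} = \la_{nk}} v_{ls} v_{ls}^*$, i.e.\ that the residue of the Weyl matrix is recovered from \emph{any} admissible choice of norming vectors. The paper's Lemma~\ref{lem:relval} needs the orthogonality relations \eqref{al1}--\eqref{al2} for $\vv$ at the eigenvalues (from \cite[Lemma~5]{Bond11}) to show that every decomposition $\al_{nk} = \sum u_{ls} u_{ls}^*$ into mutually orthogonal vectors produces orthonormal eigenfunctions, and then that any two such systems differ by a block-unitary factor, whence $V_{nk} V_{nk}^* = U_{nk} U_{nk}^* = \al_{nk}$. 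Your closing remark about $V_n (V_n^* V_n)^{-1} V_n^*$ being intrinsic points in the right direction but does not substitute for this argument; note also that the unitary freedom acts blockwise on groups of \emph{equal} eigenvalues, not on the whole cluster near $n-1$, so the invariant objects needed to verify \eqref{structal} are the partial sums $\al_{nk}$ rather than $V_n V_n^*$ alone.

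Second, your necessity argument for condition~2 ("the discrepancy is summable-small \dots\ so the transition is a compact perturbation of a basis") does not work as stated: quadratic closeness to an orthonormal basis does not imply completeness (drop one element of an orthonormal basis), and \eqref{asymptV} controls the nondegeneracy of $V_n$ only for large $n$ --- Example~\ref{ex:fail} satisfies condition~1 yet fails completeness precisely because of a low-index degeneracy. A correct route is either the transformation-operator identity $Y_{nk} = (I + \mathcal K)\bigl[v_{nk}\cos(\rho_{nk}\cdot)\bigr]$ with $\mathcal K$ Volterra, so that $I + \mathcal K$ is boundedly invertible and completeness transfers both ways, or, as in the paper, the span identity $\mbox{span}\{ v_{ls} \colon \la_{ls} = \la_{nk}\} = \mbox{span}\{ \mathscr E_{ls} \colon \la_{ls} = \la_{nk}\}$ combined with condition~3 of Proposition~\ref{prop:charw}. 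A smaller but real issue of the same kind: substituting the crude asymptotics $\vv(x,\la_{nk}) = \cos(\rho_{nk}x) I_m + O(n^{-1})$ into the orthonormality relations yields only $V_n^* V_n = \frac{2}{\pi} I_m + O(n^{-1})$, not the remainder $K_n/n$ with $\{K_n\} \in l_2$ required by \eqref{asymptV}; that refinement needs the second-order term of $\vv$ and is exactly \cite[Lemma~3]{Bond19}, which the paper cites rather than reproves.
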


\begin{remark} \label{rem:char1}
For $m = 1$, condition~2 of Theorem~\ref{thm:char} becomes trivial. Indeed, there holds $v_n \ne 0$, 
and the sequence $\{ \cos (\rho_n x) \}_{n \ge 1}$ with $\rho_n = n - 1 + O(n^{-1})$, $\arg \rho_n \in \left( -\tfrac{\pi}{2}, \tfrac{\pi}{2}\right]$, is complete in $L_2(0,\pi)$ if and only if the values $\{ \rho_n \}_{n \ge 1}$ are all distinct. Thus, in the scalar case, condition~2 can be equivalently replaced by
$$
\la_n \ne \la_k \quad (n \ne k), \quad v_n \ne 0, \quad n \ge 1.
$$
Then Theorem~\ref{thm:char} turns into the classical result of inverse Sturm-Liouville theory (see, e.g., \cite[Theorem~1.6.2]{FY01}). However, in the matrix case, condition~2 is non-trivial, which is shown by Example~\ref{ex:fail}.
\end{remark}

\begin{remark} \label{rem:Riesz}
The vector functions from condition~2 of Theorem~\ref{thm:char} are closely related to eigenfunctions. Indeed, the solution $\vv(x, \la)$ satisfies the standard asymptotics
\begin{equation} \label{asymptvv}
\vv(x, \rho^2) = \cos (\rho x) I_m + O\left( \rho^{-1} \exp(|\mbox{Im}\rho| x)\right), \quad |\rho| \to \infty,
\end{equation}
so
$$
\vv(x, \la_{nk}) v_{nk} = v_{nk} \cos(\rho_{nk} x) + O\left( n^{-1} \right), \quad n \to \infty.
$$
Thus, the sequence $\{ v_u \cos (\rho_u x) \}_J$ is quadratically close to the orthonormal basis of eigenfunctions $\{ Y_u \}_J$. Since by condition~2 of Theorem~\ref{thm:char} the sequence $\{ v_u \cos (\rho_u x) \}_J$ is complete, then it is a Riesz basis in $L_2((0,\pi); \mathbb C^m)$.
\end{remark}

Note that the boundary value problem \eqref{eqv}--\eqref{bc} can be equivalently represented in the form
\begin{gather} \label{eqsi}
-(Y^{[1]})' - \sigma(x) Y^{[1]} - \sigma^2(x) Y =\lambda Y, \quad x \in (0, \pi), \\ \label{bcsi}
Y^{[1]}(0) = 0, \quad Y^{[1]}(\pi) + \check{H} Y(\pi) =0,
\end{gather}
where
\begin{equation} \label{defsi}
\sigma(x) := h + \int_0^x Q(t) \, dt, \quad \check{H} := H +\sigma(\pi),
\end{equation}
and $Y^{[1]}(x) := Y'(x) - \sigma(x) Y(x)$ is the quasi-derivative. For $Q \in L_2((0,\pi); \mathbb C^{m \times m})$, we have $\sigma \in W_2^1([0,\pi]; \mathbb C^{m \times m})$.

The direct and inverse matrix Sturm-Liouville problems in a more general form than \eqref{eqsi}--\eqref{bcsi} with $\sigma \in L_2((0,\pi); \mathbb C^{m \times m})$ have been investigated in \cite{Bond21-mn, Bond21-amp}.

Let us provide the spectral data characterization for the problem \eqref{eqsi}--\eqref{bcsi} with $\sigma \in L_2((0,\pi); \mathbb C^{m \times m})$, which follows from the results of \cite{Bond21-mn, Bond21-amp} similarly to Theorem~\ref{thm:char}. Denote by $\varphi(x, \la)$ the $(m \times m)$ matrix solution of equation \eqref{eqsi} satisfying the initial conditions
\begin{equation} \label{icvvsi}
\varphi(0,\la) = I_m, \quad \varphi^{[1]}(0,\la) = 0_m.
\end{equation}
Obviously, if $\sigma \in W_2^1([0,\pi]; \mathbb C^{m \times m})$ and the relations \eqref{defsi} hold, then the initial conditions \eqref{icvvsi} coincide with \eqref{icvv}. The norming vectors $\{ v_u \}_J$ are defined for the problem \eqref{eqsi}--\eqref{bcsi} in the same way as described above.

\begin{thm} \label{thm:charsi}
For a sequence $\{ \la_u, v_u \}_J \in \mathcal S$ to be the spectral data of the problem \eqref{eqsi}--\eqref{bcsi} with $\sigma \in L_2((0,\pi); \mathbb C^{m \times m})$, $\sigma(x) = \sigma^*(x)$ a.e. on $(0,\pi)$, and $\check{H} = \check{H}^*$,
the following conditions are necessary and sufficient:
\begin{enumerate}
\item Asymptotic relations:
\begin{gather*}
\rho_{nk} := \sqrt{\la_{nk}} = n - 1 + \check{\varkappa}_{nk}, \quad \{ \check{\varkappa}_{nk} \} \in l_2, \quad n \ge 1, \: k =\overline{1,m}, \\
V_n^* V_n = \frac{2}{\pi} I_m + \check{K}_n, \quad \{ \check{K}_n \} \in l_2(\mathbb C^{m \times m}), \quad n \ge 1.
\end{gather*}
\item The sequence $\{ v_u \cos(\rho_u x) \}_J$ is complete in $L_2((0,\pi); \mathbb C^m)$.
\end{enumerate}
\end{thm}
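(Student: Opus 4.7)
My plan is to obtain Theorem~\ref{thm:charsi} by translating the spectral data characterization already proved in \cite{Bond21-mn, Bond21-amp} for the more general matrix $W_2^{-1}$ setting (of which $\sigma \in L_2$ is a subcase) into the norming vector formulation used here, exactly parallel to how the author derives Theorem~\ref{thm:char} from \cite{Bond11, Bond19}.

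\medskip

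\emph{Necessity.} Assume $\sigma \in L_2((0,\pi); \mathbb{C}^{m \times m})$ is self-adjoint and $\check H = \check H^*$. From the direct theory of \cite{Bond21-mn}, the characteristic function $\Delta(\la) := \det\bigl(\varphi^{[1]}(\pi,\la) + \check H \varphi(\pi,\la)\bigr)$ is entire of sine type, which yields the eigenvalue asymptotics $\rho_{nk} = n - 1 + \check\varkappa_{nk}$ with $\{\check\varkappa_{nk}\} \in l_2$. For the norming matrix asymptotics I would use the analog of \eqref{asymptvv} valid in the $L_2$-$\sigma$ setting, namely $\varphi(x,\rho^2) = \cos(\rho x) I_m + O(\rho^{-1} \exp(|\mathrm{Im}\,\rho| x))$, established via the transformation operator in \cite{Bond21-mn}. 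Plugging this into the orthonormality relations $(Y_{nj}, Y_{nk})_{L_2} = \delta_{jk}$ for $1 \le j,k \le m$ at fixed $n$ gives $V_n^* \bigl( \tfrac{\pi}{2} I_m + o(1)\bigr) V_n = I_m$, and a Neumann-series perturbation argument refines the error to an $l_2$ correction, producing $V_n^* V_n = \tfrac{2}{\pi} I_m + \check K_n$ with $\{\check K_n\} \in l_2(\mathbb{C}^{m\times m})$. Completeness in condition~2 is automatic because $\{v_{nk}\cos(\rho_{nk}x)\}$ is quadratically close to the orthonormal basis $\{Y_{nk}\}$ in exactly the manner described in Remark~\ref{rem:Riesz}, and quadratic closeness preserves Riesz basicity and in particular completeness.

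\medskip

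\emph{Sufficiency.} Given data $\{\la_{nk}, v_{nk}\} \in \mathcal S$ satisfying conditions 1 and 2, I would feed them into the reconstruction procedure of \cite{Bond21-mn, Bond21-amp}. There the inverse problem is reduced to a linear main equation in a suitable Banach space of infinite matrix sequences; solvability of that equation is guaranteed by the Riesz basicity encoded in condition~2 combined with the asymptotics in condition~1. The output $(\sigma, \check H)$ lies in the required class, and self-adjointness of $\sigma$ and of $\check H$ follows from the reality of $\{\la_{nk}\}$ together with the Hermitian symmetry of the norming data encoded in the asymptotic relation for $V_n^* V_n$, as verified in the symmetric reconstruction formulas in \cite{Bond21-mn, Bond21-amp}.

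\medskip

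\emph{Main obstacle.} The spectral data used in \cite{Bond21-mn, Bond21-amp} are typically formulated in terms of weight matrices $\alpha_n$ (or a Weyl-type matrix) rather than the explicit norming vectors $v_{nk}$ employed here. The central technical step is therefore to establish the precise dictionary $\alpha_n \leftrightarrow V_n$ (essentially $\alpha_n = V_n V_n^*$ up to normalization) and to verify that condition~2 corresponds exactly to the non-degeneracy hypothesis on the weight matrices used in \cite{Bond21-mn, Bond21-amp} to guarantee solvability of the main equation. Once this translation is in place, both the necessity and sufficiency parts of Theorem~\ref{thm:charsi} are a direct rewriting of the corresponding statements in \cite{Bond21-mn, Bond21-amp}, in complete analogy with the passage from \cite{Bond11, Bond19} to Theorem~\ref{thm:char}.
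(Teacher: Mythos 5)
Your proposal matches the paper's route: the paper states Theorem~\ref{thm:charsi} as a direct consequence of the characterization results of \cite{Bond21-mn, Bond21-amp}, translated into the norming-vector language ``similarly to Theorem~\ref{thm:char}'', i.e.\ via the dictionary $\al_{nk} = \sum_{\la_{ls}=\la_{nk}} v_{ls}v_{ls}^*$ (Lemma~\ref{lem:relval}), the equivalence of the $V_n^*V_n$ and $\be_n$ asymptotics (Lemma~\ref{lem:asymptbe}), and the equivalence of the two completeness conditions. Your identification of the weight-matrix/norming-vector translation as the central step is exactly the content the paper relies on, so the proposal is correct and essentially identical in approach.
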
 

\section{Main results} \label{sec:main}

In this section, we formulate our main theorems on uniform bounds for the direct and inverse problems, and on the uniform stability of the inverse problem for the matrix Sturm-Liouville operator given by \eqref{eqv}--\eqref{bc}.

For $R > 0$, denote by $\mathcal P_R$ the set of the triples $(Q, h, H) \in \mathcal P$ satisfying 
$$
\| Q \|_{L_2} + \| h \| + \| H \| \le R.
$$

The following theorem provides uniform bounds for a solution of the direct spectral problem $(Q, h, H) \mapsto \{ \la_u, v_u \}_J$ on $\mathcal P_R$.

\begin{thm} \label{thm:bounddir}
Let $R > 0$ be fixed. Then the spectral data $\{ \la_u, v_u \}_J$ of the problem \eqref{eqv}--\eqref{bc} 
with parameters $(Q, h, H) \in \mathcal P_R$ fulfill the asymptotic formulas \eqref{asymptla} and \eqref{asymptV} with the remainder terms satisfying
\begin{equation} \label{boundabove}
\| \{ \varkappa_{nk} \} \|_{l_2} \le \Omega, \quad \| \{ K_n \} \|_{l_2(\mathbb C^{m \times m})} \le \Omega,
\end{equation}
where $\Omega > 0$ depends only on $R$. Moreover, $\{ v_u \cos(\rho_u x) \}_J$ 
is a Riesz basis in $L_2((0,\pi); \mathbb C^m)$ and, for any $\{ a_u \} \in l_2$, there holds
\begin{equation} \label{RBbound}
\left \| \sum_{u \in J} a_u v_u \cos(\rho_u x) \right\|_{L_2} \ge \eps \|\{ a_u \} \|_{l_2},
\end{equation}
where $\eps > 0$ depends only on $R$.
\end{thm}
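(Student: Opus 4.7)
The plan is to split Theorem~\ref{thm:bounddir} into the uniform asymptotic bounds \eqref{boundabove} and the uniform Riesz lower bound in \eqref{RBbound}, and to treat them quite differently. For \eqref{boundabove} I would start from the Volterra integral representation
\begin{equation*}
\vv(x,\la) = \cos(\rho x)\,I_m + \frac{\sin(\rho x)}{\rho}\,h + \int_0^x \frac{\sin(\rho(x-t))}{\rho}\,Q(t)\,\vv(t,\la)\,dt,
\end{equation*}
derive $\|\vv(x,\la)\|\le C(R)\exp(|\mathrm{Im}\,\rho|\,x)$ by a Gr\"onwall step, and from this extract an asymptotic expansion of the characteristic function $\Delta(\la):=\det\bigl(\vv'(\pi,\la)+H\vv(\pi,\la)\bigr)$ with $l_2$-summable remainders whose norms are controlled by $R$. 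Rouch\'e's theorem applied to $\Delta$ versus its unperturbed counterpart from Example~\ref{ex:zero} then yields \eqref{asymptla} with $\|\{\varkappa_{nk}\}\|_{l_2}\le\Omega(R)$. The second asymptotic \eqref{asymptV} is derived from the orthonormality of $\{Y_{nk}\}$---which translates into an identity for $V_n^*V_n$ in terms of integrals of products $\vv(x,\la_{nk})^*\vv(x,\la_{ns})$---combined with \eqref{asymptvv} and the uniform $\vv$-bound, again tracked with constants depending only on $R$.

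For \eqref{RBbound} the upper companion $\|\sum a_{nk}v_{nk}\cos(\rho_{nk}x)\|_{L_2}\le C(R)\|\{a_{nk}\}\|_{l_2}$ is immediate from quadratic closeness of $\{v_{nk}\cos(\rho_{nk}x)\}$ to the reference orthonormal basis of Example~\ref{ex:zero}, the total defect being bounded by $\Omega(R)^2$ thanks to the asymptotics just proven. The lower bound is established by contradiction, following the scheme of \cite{SS10}: assume there exist $(Q_p,h_p,H_p)\in\mathcal P_R$ and $\{a_{nk}^{(p)}\}$ of unit $l_2$-norm with $\|\sum_{n,k}a_{nk}^{(p)}v_{nk}^{(p)}\cos(\rho_{nk}^{(p)}x)\|_{L_2}\to 0$. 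Pass to a subsequence so that $Q_p\rightharpoonup Q$ weakly in $L_2$ and $h_p\to h$, $H_p\to H$; the functions $\sigma_p:=h_p+\int_0^x Q_p(t)\,dt$ are then uniformly bounded in $W_2^1([0,\pi];\mathbb C^{m\times m})$, and the compact embedding $W_2^1\hookrightarrow W_2^\al$ ($\al<1$) (or $W_2^1\hookrightarrow C$) gives $\sigma_p\to\sigma$ strongly. Combined with the uniform asymptotics, a diagonal extraction arranges $\la_{nk}^{(p)}\to\la_{nk}^\circ$ and $V_n^{(p)}\to V_n^\circ$ for every $n$.

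The strong convergence $\sigma_p\to\sigma$ ensures continuous dependence of the solution of \eqref{eqsi}--\eqref{bcsi} on $\sigma$, and identifies $(\{\la_{nk}^\circ\},\{v_{nk}^\circ\})$ as a legitimate choice of spectral data for the limit problem with $L_2$-potential $\sigma$; Theorem~\ref{thm:charsi} then asserts that $\{v_{nk}^\circ\cos(\rho_{nk}^\circ x)\}$ is a Riesz basis in $L_2((0,\pi);\mathbb C^m)$. Extracting a weak $l_2$-limit $\{a_{nk}^\circ\}$ of $\{a_{nk}^{(p)}\}$ and splitting each sum into a finite initial segment (where componentwise strong convergence holds) and a tail (controlled uniformly in $p$ by the upper Riesz bound and the asymptotics), one obtains $\sum a_{nk}^\circ v_{nk}^\circ\cos(\rho_{nk}^\circ x)=0$ in $L_2$, hence $a_{nk}^\circ\equiv 0$ by the Riesz basis property of the limit. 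The same splitting upgrades the weak convergence $a^{(p)}\rightharpoonup 0$ in $l_2$ to strong convergence, contradicting $\|a^{(p)}\|_{l_2}=1$.

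The main obstacle will be handling (asymptotically) multiple eigenvalues in the passage to the limit: a cluster $\{\la_{nk}^{(p)}\}_{k=1}^m$ may merge in the limit while $V_n^{(p)}$ rotates freely inside its eigenspace, so the convergence must be formulated at the level of the matrices $V_n^{(p)}$ rather than of individual vectors $v_{nk}^{(p)}$, and a careful in-cluster relabelling is required to match the limit data to a valid choice of spectral data for the limit problem in the sense of Theorem~\ref{thm:charsi}. A secondary technical point is that all continuous-dependence estimates are naturally carried out in the $\sigma$-variable of \eqref{eqsi}--\eqref{bcsi}, which is precisely why the generalized characterization Theorem~\ref{thm:charsi} from \cite{Bond21-mn, Bond21-amp} is invoked rather than Theorem~\ref{thm:char}.
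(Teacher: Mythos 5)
Your proposal is correct and follows essentially the same route as the paper: the uniform asymptotics \eqref{boundabove} are standard (the paper simply cites \cite{Bond11, Bond19} rather than re-deriving them from the Volterra representation), and the lower bound \eqref{RBbound} is obtained exactly as in Lemmas~\ref{lem:boundd} and~\ref{lem:RBbound} --- by contradiction, passing to the $\sigma$-variable, using weak compactness of the ball \eqref{ballD} and the compact embedding $W_2^1 \subset W_2^{\al}$, invoking continuity of the spectral data to identify the limit problem, and transferring its Riesz lower bound back with careful treatment of merging eigenvalue clusters via the weight matrices. The only organizational difference is that you run the contradiction on a sequence of unit-norm witnesses $a^{(p)}$ upgraded from weak to strong convergence, whereas the paper transfers the constant $\eps(Q,h,H)$ of the limit problem through the coefficient substitution \eqref{defbnlarge}, \eqref{combab}; these are equivalent packagings of the same argument.
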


\begin{cor}
(i) Theorem~\ref{thm:bounddir} immediately implies the uniform boundedness of the vectors norms:
\begin{equation} \label{boundv}
0 < c(R) \le \| v_u \| \le C(R), \quad u \in J.
\end{equation}
Indeed, the upper estimate in \eqref{boundv} follows from \eqref{asymptV} and \eqref{boundabove}, while the lower estimate follows from \eqref{RBbound}.

(ii) Under the hypothesis of Theorem~\ref{thm:bounddir}, the uniform upper bound for the Riesz basis also holds according to the asymptotics \eqref{asymptla}, \eqref{asymptV} and the estimates \eqref{boundabove}:
$$
\left \| \sum_{u \in J} a_u v_u \cos (\rho_u x) \right\|_{L_2} \le C(R) \|\{ a_u \} \|_{l_2}.
$$
\end{cor}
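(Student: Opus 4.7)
For assertion~(i), both estimates fall out immediately from Theorem~\ref{thm:bounddir}. Since $v_{nk}$ is the $k$-th column of $V_n$, one has $\|v_{nk}\|^2 \le \|V_n^* V_n\|$, and \eqref{asymptV} together with the pointwise consequence $\|K_n\| \le \Omega$ of \eqref{boundabove} give $\|v_{nk}\|^2 \le \tfrac{2}{\pi} + \Omega$, i.e., the required upper bound. For the lower bound I would substitute into \eqref{RBbound} the sequence supported at a single index $(n_0, k_0)$ with value $1$, obtaining $\|v_{n_0 k_0}\| \cdot \|\cos(\rho_{n_0 k_0} x)\|_{L_2[0,\pi]} \ge \eps$, and then establish $\|\cos(\rho_{nk}\, \cdot)\|_{L_2[0,\pi]} \le C(R)$ uniformly. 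The latter reduces to the bound $|\mbox{Im}\, \rho_{nk}| \le \Omega$, which is immediate from \eqref{asymptla} and \eqref{boundabove}: since $n - 1$ is real, one has $\mbox{Im}\, \rho_{nk} = \mbox{Im}\, \varkappa_{nk}/n$, whence $|\mbox{Im}\, \rho_{nk}| \le |\varkappa_{nk}|/n \le \Omega$.

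For assertion~(ii), I would set $\chi_{nk}(x) := v_{nk} \cos(\rho_{nk} x)$ and decompose it as
\begin{equation*}
\chi_{nk} = \chi_{nk}^{(0)} + \delta_{nk}, \qquad \chi_{nk}^{(0)}(x) := v_{nk} \cos((n-1) x), \qquad \delta_{nk}(x) := v_{nk}\bigl(\cos(\rho_{nk} x) - \cos((n-1) x)\bigr).
\end{equation*}
By the triangle inequality it suffices to estimate the synthesis operators for $\{\chi_{nk}^{(0)}\}$ and $\{\delta_{nk}\}$ separately. For the unperturbed part, the orthogonality of $\{\cos((n-1)x)\}_{n \ge 1}$ in $L_2(0,\pi)$ block-diagonalizes the sum across $n$:
\begin{equation*}
\Bigl\| \sum_{n,k} a_{nk} \chi_{nk}^{(0)} \Bigr\|_{L_2}^{2} = \sum_{n \ge 1} \|\cos((n-1)x)\|_{L_2}^{2}\, \|V_n a^{(n)}\|^{2}, \qquad a^{(n)} := (a_{n1}, \ldots, a_{nm})^{\top},
\end{equation*}
and within each $n$-block \eqref{asymptV} with $\|K_n\| \le \Omega$ yields $\|V_n a^{(n)}\|^{2} \le (\tfrac{2}{\pi} + \Omega)\|a^{(n)}\|^{2}$, giving the bound $C(R) \|\{a_{nk}\}\|_{l_2}$ for the first piece.

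For the perturbation $\{\delta_{nk}\}$, Cauchy--Schwarz reduces the estimate to the finiteness of $\sum_{n,k} \|v_{nk}\|^{2} \|\cos(\rho_{nk} x) - \cos((n-1) x)\|_{L_2}^{2}$. Using the product-to-sum identity
\begin{equation*}
\cos(\rho_{nk} x) - \cos((n-1) x) = -2 \sin\frac{(\rho_{nk} + n-1) x}{2}\, \sin\frac{\varkappa_{nk} x}{2n}
\end{equation*}
together with $|\mbox{Im}\, \rho_{nk}| \le \Omega$ (from~(i)) and $|\varkappa_{nk}| \le \Omega$, and the elementary estimates $|\sin z| \le \cosh(|\mbox{Im}\, z|)$ and $|\sin z| \le |z| e^{|z|}$, I would derive the pointwise bound $|\cos(\rho_{nk} x) - \cos((n-1) x)| \le C(R) |\varkappa_{nk}|/n$ on $[0,\pi]$. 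Combined with $\|v_{nk}\| \le C(R)$ from~(i) and $\sum_{n,k} |\varkappa_{nk}|^2 \le \Omega^2$, this yields a total bound depending only on $R$, and summing the two pieces completes the proof. I do not expect a serious obstacle here; the only mild technicality is the handling of the finitely many small indices $n$ for which $\rho_{nk}$ may be purely imaginary, but the uniform bound on the imaginary parts absorbs them into the constants.
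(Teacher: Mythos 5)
Your proposal is correct and follows exactly the route the paper indicates (upper bound from \eqref{asymptV} and \eqref{boundabove}, lower bound by testing \eqref{RBbound} on a single-index sequence, and the upper Riesz bound by splitting off the orthogonal system $\{v_{nk}\cos((n-1)x)\}$ and estimating the $l_2$-quadratically-close perturbation via \eqref{asymptla} and \eqref{boundabove}); the paper merely asserts these implications without writing out the details you supply. All of your individual estimates, including the uniform bound $|\mathrm{Im}\,\rho_{nk}|\le\Omega$ and the block-diagonalization over $n$, check out.
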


Let us define the set of spectral data satisfying the assertions of Theorem~\ref{thm:bounddir}.

\begin{defin}[Set $\mathcal S_{\Omega, \eps}$] \label{def:B}
For $\Omega > 0$ and $\eps > 0$, denote by $\mathcal S_{\Omega, \eps}$ the set of all the sequences 
$\{ \la_u, v_u \}_J$ in $\mathcal S$
such that the asymptotics \eqref{asymptla} and \eqref{asymptV} hold,
the sequence $\{ v_u \cos(\rho_u x) \}_J$ is a Riesz basis in $L_2((0,\pi); \mathbb C^m)$, 
and the estimates \eqref{boundabove} and \eqref{RBbound} are fulfilled with the initially fixed $\Omega$ and $\eps$.
\end{defin}

The following theorem establishes the uniform boundedness of Inverse Problem~\ref{ip:1} on $\mathcal S_{\Omega,\eps}$.

\begin{thm} \label{thm:boundinv}
Let $\Omega > 0$ and $\eps > 0$ be fixed. 
Then, any sequence $\{ \la_u, v_u \}_J \in \mathcal S_{\Omega, \eps}$ is the spectral data of the boundary value problem 
\eqref{eqv}--\eqref{bc} for a unique triple $(Q, h, H) \in\mathcal P$. Moreover, $(Q, h, H) \in \mathcal P_R$, where $R > 0$ depends only on $\Omega$ and $\eps$.
\end{thm}

Thus, Theorems~\ref{thm:bounddir} and~\ref{thm:boundinv} assert that:
\begin{itemize}
\item The direct spectral transform $(Q, h, H) \mapsto \{ \la_u, v_u \}_J$ maps $\mathcal P_R$ into $\mathcal S_{\Omega,\eps}$.  
\item The inverse spectral transform $\{ \la_u, v_u \}_J \mapsto (Q, h, H)$ maps $\mathcal S_{\Omega, \eps}$ into $\mathcal P_R$.
\end{itemize}

\begin{remark} \label{rem:sep}
For $m = 1$, the condition \eqref{RBbound}:
$$
\left \| \sum_{n=1}^{\infty} a_n v_n \cos(\rho_n x) \right\|_{L_2} \ge \eps \|\{ a_n \} \|_{l_2}
$$
can be replaced by the following equivalent conditions:
\begin{equation} \label{sepeig}
   \al_n \ge \eps_1, \quad \la_{n + 1} - \la_n \ge \eps_1, \quad n \ge 1, \quad \eps_1 > 0,
\end{equation}
where $\al_n = |v_n|^2$, $\la_n = \rho_n^2$. Note that 
uniform bounds for appropriate Riesz bases of cosines, sines, and exponentials follow from uniform separation of eigenvalues according to \cite{Hryn10}.
\end{remark}

Next, we will show that, on the set $\mathcal S_{\Omega,\eps}$, the uniform stability of Inverse Problem~\ref{ip:1} holds. For this purpose, we need some additional notations. Let us introduce the sequence $\{ \rho_u, \be_u \}_J$, $\rho_u := \sqrt{\la_u}$, $\be_u := v_u v_u^*$, and call this sequence the $(\rho,\be)$\textit{-spectral data} of the corresponding problem $\mathcal L$. Furthermore, let us define a partition of the index set \eqref{defJ}.

\begin{defin}[Partition $\mathscr P$] \label{def:Js}
Let $\mathscr P = \{ J_s \}_{s \ge 1}$ be a partition of the set $J$ into finite subsets
$$
J_s = \{ u \in J \colon u_s \le u < u_{s+1} \}, \quad (1,1) = u_1 < u_2 < \dots < u_s < u_{s+1} < \dots
$$

For two sequences of spectral data $\mathscr S := \{ \la_u, \be_u \}_J$ and $\tilde{\mathscr S} := \{ \tilde\la_u, \tilde \be_u \}_J$, consider the corresponding $(\rho,\be)$-spectral data $\{ \rho_u, \be_u \}_J$ and $\{ \tilde \rho_u, \tilde \be_u \}_J$, respectively.
Denote
\begin{gather}  \label{sumbe}
\be(J_s) := \sum_{u \in J_s} \be_u, \quad \tilde \be(J_s) := \sum_{u \in J_s} \tilde\be_u, \\ \label{defzeta}
\zeta(J_s) := \sum_{u \in J_s} |\rho_u - \tilde \rho_u| + \sum_{u, v \in J_s} \bigl( |\rho_u - \rho_v|
+ |\tilde \rho_u - \tilde \rho_v|\bigr) + \| \be(J_s) - \tilde \be(J_s) \|, \\ \label{defZ}
Z(\mathscr P, \mathscr S, \tilde{\mathscr S}) := \| \{ s \, \zeta(J_s) \}_{s \ge 1} \|_{l_2}.
\end{gather}
\end{defin}

Roughly speaking, the value $Z$ characterizes ``the difference'' between two spectral data sets $\mathscr S$ and $\tilde{\mathscr S}$ using the partition $\mathscr P$. The sum $\sum_{u,v \in J_s}$ in \eqref{defzeta} also involves the differences $|\rho_u - \rho_v|$ and $|\tilde \rho_u - \tilde \rho_v|$ between values related to the same spectral data set.

\begin{example}
The partition $\mathscr P \colon J_n = \{ (n,k) \}_{k = 1}^m$, $n \in \mathbb N$, satisfies Definition~\ref{def:Js} and implies $Z(\mathscr P, \mathscr S, \tilde{\mathscr S}) < \infty$ for any two spectral data sequences $\mathscr S$ and $\tilde{\mathscr S}$ of problems \eqref{eqv}--\eqref{bc} with coefficients in $\mathcal P$ in view of the asymptotics \eqref{asymptla} and \eqref{asymptV}.
\end{example}

For any possible choice of the partition $\mathscr P$, we get the following theorem on the uniform stability of Inverse Problem~\ref{ip:1}.

\begin{thm} \label{thm:unistab}
Let $\mathcal L = \mathcal L(Q, h, H)$ and $\tilde{\mathcal L} = \mathcal L(\tilde Q, \tilde h, \tilde H)$ be two problems of form \eqref{eqv}--\eqref{bc}, 
whose spectral data $\mathscr S = \{ \la_u, v_u \}_J$ and $\tilde{\mathscr S} = \{ \tilde \la_u, \tilde v_u \}_J$, respectively, lie in $\mathcal S_{\Omega, \eps}$. Then
\begin{equation} \label{uni}
\| Q - \tilde Q \|_{L_2}  + \| h - \tilde h \| + \| H - \tilde H \| \le C(\Omega, \eps) Z(\mathscr P, \mathscr S, \tilde{\mathscr S}),
\end{equation}
where $\mathscr P$ is any partition satisfying Definition~\ref{def:Js}.
\end{thm}

Note that $Z$ in \eqref{uni} depends on a non-unique choice of $\mathscr P$, norming vectors $\{ v_u \}_J$ and $\{ \tilde v_u \}_J$. Obviously, it is worth choosing them to make the value $Z$ as minimal as possible. 

For comparing Theorem~\ref{thm:unistab} with previous results, consider the following example. Other examples are provided in Section~\ref{sec:unistab}.

\begin{example} \label{ex:1}
Choose the partition $\mathscr P$ consisting of
the one-element subsets $J_{(n-1)m+k}:= \{ (n,k) \}$, $n \ge 1$, $k = \overline{1,m}$. Then Theorem~\ref{thm:unistab} implies the estimate \eqref{uni} with
\begin{equation} \label{Zex1}
Z = \left( \sum_{n = 1}^{\infty} \sum_{k = 1}^m \bigl( n \bigl( |\rho_{nk} - \tilde \rho_{nk}| + \| \be_{nk} - \tilde \be_{nk} \| \bigr) \bigr)^2\right)^{1/2}.
\end{equation}
The resulting estimate directly generalizes the uniform estimate (2.4) of \cite{Bond24} for the scalar Sturm-Liouville operator. A local version of this estimate in the scalar case is given in \cite[Theorem~1.6.4]{FY01}. However, in the matrix case, the value \eqref{Zex1} is not necessarily finite, which is shown by Example~\ref{ex:2}. Therefore, one needs to consider different partitions satisfying Definition~\ref{def:Js}.
\end{example}

For the scalar case $m = 1$, the uniform stability of the inverse Sturm-Liouville problems has been proved by Savchuk and Shkalikov \cite{SS10, SS13} for the Dirichlet boundary conditions and potentials $q \in W_2^{\theta}[0,\pi]$, $\theta > -1$. In \cite{SS10}, the inverse problem by two spectra was considered in more detail. The study \cite{SS13} is focused on the inverse problem by the spectral function. Our Theorems~\ref{thm:bounddir}, \ref{thm:boundinv}, and \ref{thm:unistab} generalize a part of results of \cite[Theorem~3.15]{SS13} to matrix-valued potentials of $L_2$, taking technical differences between the Dirichlet and Robin boundary conditions and Remark~\ref{rem:sep} into account.

\section{Uniqueness and characterization} \label{sec:uchar}

This section contains the proofs of Theorems~\ref{thm:uniq} and~\ref{thm:char} on the uniqueness and the spectral data characterization, respectively, for Inverse Problem~\ref{ip:1}. More precisely, we deduce Theorems~\ref{thm:uniq} and~\ref{thm:char} from  previous results of \cite{Yur06, Bond11, Bond19}, which were obtained in terms of the so-called weight matrices.

Denote by $\Phi(x, \la)$ the $(m \times m)$ matrix solution of equation \eqref{eqv} satisfying the boundary conditions
\begin{equation} \label{bcPhi}
\Phi'(0,\la) - h \Phi(0,\la) = I_m, \quad \Phi'(\pi,\la) + H \Phi(\pi,\la) = 0_m.
\end{equation}

The Weyl matrix is defined as $M(\la) := \Phi(0,\la)$. The elements of the matrix function $M(\la)$ are meromorphic in $\la$ and their poles belong to the eigenvalue set $\{ \la_u \}_J$. Introduce the weight matrices as the residues with respect to these poles:
$$
\al_u := \Res_{\la = \la_u} M(\la), \quad u \in J.
$$

\begin{prop}[\hspace*{-3pt}\cite{Yur06}] \label{prop:uniqw}
The spectral data $\{ \la_u, \al_u \}_J$ uniquely specify $Q$, $h$, and $H$.
\end{prop}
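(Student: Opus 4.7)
The plan is a two-step reduction through the Weyl matrix: first show that the spectral data $\{\la_{nk}, \al_{nk}\}$ determines $M(\la)$ as a meromorphic matrix function, then invoke the classical uniqueness of $(Q, h, H)$ from $M(\la)$. For the first step, I would derive a Mittag-Leffler-type representation
$$
M(\la) = \sideset{}{'}\sum_{n,k} \frac{\al_{nk}}{\la - \la_{nk}},
$$
where the prime indicates that an eigenvalue of multiplicity $r$ contributes a single summand despite being listed $r$ times. The input would be the standard asymptotics of $\vv(x,\la)$ together with the corresponding asymptotics of a second solution of \eqref{eqv} (used to express $\Phi$ via \eqref{bcPhi}), from which one concludes that $\|M(\la)\| \to 0$ along expanding circles $|\la|=R_N$ chosen in the spectral gaps. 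Applying Cauchy's formula to $M(\mu)/(\mu - \la)$ on $|\mu|=R_N$, letting $N \to \infty$, and computing the residues yields the displayed expansion and hence the reconstruction of $M(\la)$ from the spectral data.

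For the second step, assume $\mathcal L$ and $\tilde{\mathcal L}$ share the same Weyl matrix. Writing $\Phi(x,\la) = \mathcal S(x,\la) + \vv(x,\la) M(\la)$, where $\mathcal S(x,\la)$ is the matrix solution with $\mathcal S(0,\la) = 0_m$ and $\mathcal S'(0,\la) = I_m$, I would form the block matrix
$$
P(x, \la) := \bigl[\tilde \vv(x,\la),\ \tilde \Phi(x,\la)\bigr]\bigl[\vv(x,\la),\ \Phi(x,\la)\bigr]^{-1}
$$
and check two properties. First, at each $\la_{nk}$ the would-be pole of $P(x,\cdot)$ cancels because $M=\tilde M$; hence $P(x,\la)$ is entire in $\la$ for every fixed $x$. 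Second, the standard asymptotics $\vv,\tilde\vv = \cos(\rho x)I_m + O(\rho^{-1}e^{|\text{Im}\rho|x})$ and analogous estimates for $\mathcal S$, $\tilde{\mathcal S}$ force $P(x,\la) \to I_{2m}$ uniformly on $[0,\pi]$ along rays off the real axis as $|\la|\to \infty$. Liouville's theorem then gives $P(x,\la) \equiv I_{2m}$, so $\vv \equiv \tilde\vv$ and $\Phi \equiv \tilde\Phi$; differentiating and substituting into \eqref{eqv}, \eqref{icvv}, and \eqref{bcPhi} yields $Q = \tilde Q$, $h = \tilde h$, and $H = \tilde H$.

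The main obstacle, and the reason this is genuinely more delicate than the scalar case, is the presence of multiple and asymptotically multiple eigenvalues. The notation $\al_{nk} = \Res_{\la = \la_{nk}} M(\la)$ is redundant when several indices share the same eigenvalue, so one must be explicit about how the Mittag-Leffler series is organized and how the rank of each residue block (which equals the geometric multiplicity) encodes the full weight-matrix datum. Controlling the series convergence in the matrix norm further requires grouping indices whose $\la_{nk}$ cluster, exploiting the $l_2$ asymptotics \eqref{asymptla} -- this is precisely the combinatorial issue that, later in the paper, is formalized via the partitions $\{J_s\}$ of Definition~\ref{def:Js}. Once this bookkeeping is fixed, steps 1 and 2 proceed as outlined.
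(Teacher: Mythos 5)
The paper does not prove this proposition at all: it is imported verbatim from \cite{Yur06}, and your outline --- recovering $M(\la)$ by contour integration over expanding circles in the spectral gaps, yielding the Mittag-Leffler expansion with residues $\al_{nk}$ (which converges absolutely here because $\al_{nk}=O(1)$ while $\la_{nk}\sim (n-1)^2$), and then deducing $(Q,h,H)$ from $M(\la)$ via the matrix of spectral mappings and Liouville's theorem --- is precisely the argument of that source and of the method of spectral mappings generally. One correction to the write-up: $\bigl[\vv(x,\la),\ \Phi(x,\la)\bigr]$ is an $m\times 2m$ block and has no inverse; $P(x,\la)$ must be built from the $2m\times 2m$ fundamental matrices
$\begin{pmatrix}\vv & \Phi\\ \vv' & \Phi'\end{pmatrix}$
(invertible by the constancy of the matrix Wronskian), and the Liouville argument is applied only to the first block row, giving $P_{11}\equiv I_m$ and $P_{12}\equiv 0_m$, since the second block row grows like $\rho$ and does not tend to the identity.
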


Note that the weight matrices in the sequence $\{ \al_u \}_J$ are repeated for multiple eigenvalues, and their ranks coincide with the corresponding eigenvalue multiplicities (see \cite{Bond11}). Let us introduce some notations to avoid repetitions. 

\begin{defin}[Matrices $\al_u'$ and vectors $\mathscr E_u$] \label{def:E}
Suppose that $\la_{u_1} = \la_{u_2} = \dots = \la_{u_r}$ ($1 \le r \le m$) is a group of multiple eigenvalues maximal by inclusion. Then $\al_{u_1} = \al_{u_2} = \dots = \al_{u_r}$ and $\rank \al_{u_1} = r$. 
Put $\al_{u_1}' := \al_{u_1}$ and $\al_{u_j}' := 0_m$ for $j = \overline{2,r}$.
Next, denote by $\{ \mathscr E_{u_j} \}_{j = 1}^r$ an orthonormal basis in the subspace $\mbox{Ran} \, \al_u$ of $\mathbb C^m$. Compose the sequences 
$\{ \al'_u \}_J$ and
$\{ \mathscr E_u \}_J$. 
\end{defin}

In terms of the introduced notations, the spectral data characterization is formulated as follows.

\begin{prop}[\hspace*{-3pt}\cite{Bond19}] \label{prop:charw}
For values $\{ \la_u, \al_u \}_J$ to be the eigenvalues and the weight matrices of a problem \eqref{eqv}--\eqref{bc} with $(Q, h, H) \in \mathcal P$, the following conditions are necessary and sufficient:
\begin{enumerate}
\item $\la_u \in \mathbb R$, $\la_u \le \la_w$ for $u < w$, and
\begin{equation} \label{structal}
\left.
\begin{array}{c}
\al_u = \al_u^* \ge 0, \quad \rank \al_u = \# \{ w \colon \la_w = \la_u \}, \\
\al_u = \al_w \:\: \text{if} \:\: \la_u = \la_w.
\end{array} \quad \right\}
\end{equation}
\item There hold the asymptotics \eqref{asymptla} and
\begin{equation} \label{asymptbe}
\be_n := \sum_{k = 1}^m \al'_{nk} = \frac{2}{\pi} I_m + \frac{\mathscr K_n}{n}, \quad \{ \mathscr K_n \} \in l_2(\mathbb C^{m \times m}), \quad n \ge 1.
\end{equation}
\item The sequence $\{ \mathscr E_u \cos(\rho_u x) \}_J$ is complete in $L_2((0,\pi); \mathbb C^m)$.
\end{enumerate}
\end{prop}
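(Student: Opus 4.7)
The plan is to prove necessity and sufficiency separately, leaning on self-adjointness for necessity and on the method of spectral mappings (as indicated in the introduction and developed in \cite{Bond11, Bond19}) for sufficiency, with the model problem of Example~\ref{ex:zero} as the reference point.

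For necessity, fix $(Q,h,H) \in \mathcal P$. Reality of $\la_{nk}$ and the orderability are immediate from self-adjointness of the associated operator in $L_2((0,\pi);\mathbb C^m)$. The structural identities \eqref{structal} follow by computing $\al_{nk}$ as the residue of $M(\la) = \Phi(0,\la)$ at $\la_{nk}$ and identifying the residue with the spectral projector onto the (geometric = algebraic) eigenspace, expressed through the vector eigenfunctions $\vv(x,\la_{nk})\mathscr E_{nk}$; this gives $\al_{nk} = \al_{nk}^* \ge 0$ and $\rank\al_{nk} = $ multiplicity. For \eqref{asymptla}, a standard asymptotic analysis of the characteristic determinant $\det(\vv'(\pi,\la)+H\vv(\pi,\la))$ based on \eqref{asymptvv} and Rouch\'e yields the $l_2$-remainder. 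The weight asymptotics \eqref{asymptbe} come from expanding the Green's function of $\mathcal L$, integrating over an expanding sequence of contours, and using Parseval in $L_2((0,\pi);\mathbb C^{m\times m})$ to control the sums of residues. Finally, completeness of $\{\mathscr E_{nk}\cos(\rho_{nk}x)\}$ follows because, after a choice of $\mathscr E_{nk}$ as in Definition~\ref{def:E}, the functions $\{\vv(x,\la_{nk})\mathscr E_{nk}\}$ form an orthonormal basis of $L_2((0,\pi);\mathbb C^m)$ consisting of eigenfunctions; combining with \eqref{asymptvv} shows this basis is quadratically close (in the sense of Remark~\ref{rem:Riesz}) to $\{\mathscr E_{nk}\cos(\rho_{nk}x)\}$, and the classical Bari-type theorem upgrades quadratic closeness plus completeness of the basis to completeness of the target sequence.

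For sufficiency, take the model $(\tilde Q, \tilde h, \tilde H) = (0_m, 0_m, 0_m)$, whose spectral data $\{\tilde\la_{nk}, \tilde\al_{nk}\}$ are explicit and satisfy conditions 1--3. The method of spectral mappings organizes the candidate solution through the contour integral $\frac{1}{2\pi i}\oint (\vv(x,\la)-\tilde\vv(x,\la))\,d\la$ over expanding rectangles, producing, in the limit, a linear \emph{main equation} of the schematic form $(I + R)\tilde\vv(x,\cdot) = \vv(x,\cdot)$ in an appropriate Banach space of matrix sequences indexed by $(n,k)$ (this is precisely the construction developed in Section~\ref{sec:maineq}). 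Conditions 1 and 2 ensure that $R$ is well-defined and compact, while Condition 3, via the Riesz-basis observation of Remark~\ref{rem:Riesz}, translates into injectivity of $I+R$; Fredholm theory then gives unique solvability. One next reconstructs $Q$, $h$, $H$ from the solution by the standard formulas of the method of spectral mappings (for instance differentiating an auxiliary matrix function built from the solution), verifies that $Q \in L_2$, that \eqref{sa} and \eqref{omega0} hold, and, by running the necessity direction, that the spectral data of the reconstructed problem coincide with the prescribed $\{\la_{nk},\al_{nk}\}$.

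The principal obstacle is \textbf{the sufficiency direction}, and within it the invertibility of $I+R$. In the scalar setting (Remark~\ref{rem:char1}) this is automatic from simplicity of eigenvalues and nonvanishing of norming constants, but in the matrix case one genuinely needs the completeness/Riesz-basis condition~3 to exclude pathological degenerations caused by multiple and asymptotically multiple eigenvalues. A related difficulty is that the data $\al_{nk}$ repeat across multiple eigenvalues, so the bookkeeping via $\al_{nk}'$ and the orthonormal bases $\{\mathscr E_{nk}\}$ of Definition~\ref{def:E} must be shown to be consistent and to give reconstructions independent of the internal choices. A secondary but non-trivial point is the separate treatment of low indices (e.g.\ $n=1$), where the model problem of Example~\ref{ex:zero} already behaves differently, and the enforcement of the constraint \eqref{omega0} at the end of the reconstruction.
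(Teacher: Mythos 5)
You should first note that the paper does not actually prove Proposition~\ref{prop:charw}: it is imported verbatim from \cite{Bond19} and used as a black box to derive Theorem~\ref{thm:char} via Lemma~\ref{lem:relval}. So there is no in-paper argument to compare against; your sketch is an outline of what the cited work does (contour integration of spectral mappings, a main equation in a sequence space, reconstruction formulas), and at that level of generality the plan is the right one.

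There is, however, a concrete error in your necessity argument for condition~3. You claim that quadratic closeness of $\{\mathscr E_{nk}\cos(\rho_{nk}x)\}$ to the orthonormal basis of eigenfunctions, via a ``Bari-type theorem,'' yields completeness of the cosine system. That implication is false: quadratic closeness to a complete orthonormal system does not imply completeness of the perturbed system (take $g_1=e_2$, $g_n=e_n$ for $n\ge 2$). Bari's theorem runs in the opposite direction --- completeness \emph{plus} quadratic closeness gives a Riesz basis --- which is exactly how the paper uses it in Remark~\ref{rem:Riesz}, and Example~\ref{ex:fail} exhibits a sequence with the correct asymptotics whose cosine system is \emph{not} complete, so condition~3 is genuinely independent of conditions~1--2. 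For actual spectral data the completeness must instead be transferred from the eigenfunctions $\{\vv(x,\la_{nk})\mathscr E_{nk}\}$ to $\{\mathscr E_{nk}\cos(\rho_{nk}x)\}$ through the transformation-operator representation $\vv(x,\rho^2)=\cos(\rho x)I_m+\int_0^xK(x,t)\cos(\rho t)\,dt$, whose Volterra structure makes the map boundedly invertible on $L_2((0,\pi);\mathbb C^m)$. A second, smaller issue is in the sufficiency direction: the crux --- that condition~3 forces invertibility of $I+\tilde{\mathcal R}(x)$ --- is asserted rather than argued, and since the operator acts in a Banach space of \emph{bounded} matrix sequences it is not obviously compact there, so the appeal to Fredholm theory needs the separate justification carried out in \cite[Theorem~2]{Bond11}.
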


Let us establish the relation between norming vectors and weight matrices.

\begin{lem} \label{lem:relval}
Any sequence of norming vectors $\{ v_u \}_J$ is related to the weight matrices as follows:
\begin{equation} \label{findal}
\al_u = \sum_{w \colon \la_w = \la_u} v_w v_w^*, \quad u \in J.
\end{equation}
\end{lem}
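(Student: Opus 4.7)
The plan is to first derive an eigenfunction expansion of the form
$$
\Phi(x,\la)\,v \;=\; \sum_{n,k} \frac{Y_{nk}(x)\,v_{nk}^{*} v}{\la - \la_{nk}}, \qquad v \in \mathbb C^m,
$$
valid for all regular $\la$ and all $v$, and then read off the residue at $\la = \la_{nk}$. The formula \eqref{findal} will be an immediate consequence after pairing the residue of the (finitely many) terms that share a common eigenvalue.

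First I would fix $v \in \mathbb C^m$ and a value $\la$ outside the spectrum, and set $Y(x) := \Phi(x,\la)\,v$. From \eqref{bcPhi}, the function $Y$ solves \eqref{eqv} and satisfies the homogeneous right condition $Y'(\pi) + H Y(\pi) = 0$ together with the inhomogeneous left condition $Y'(0) - h Y(0) = v$. Since $\{ Y_{nk} \}$ is an orthonormal basis of $L_2((0,\pi);\mathbb C^m)$, I would expand $Y(x) = \sum_{n,k} c_{nk}(\la)\, Y_{nk}(x)$ and compute the coefficients $c_{nk}(\la) = \int_0^\pi Y_{nk}^*(x)\,Y(x)\,dx$ by integrating $\int_0^\pi Y_{nk}^{*}(-Y'' + QY)\,dx$ by parts twice. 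Using the self-adjointness \eqref{sa}, the boundary terms at $x=\pi$ cancel because $Y$ and $Y_{nk}$ satisfy the same homogeneous condition involving $H$, while the boundary terms at $x=0$ collapse, after cancellation against $Y_{nk}'(0)=hY_{nk}(0)$, to $Y_{nk}^*(0) v = v_{nk}^* v$. Comparing with $\int_0^\pi Y_{nk}^{*}\,LY\,dx = \la \int_0^\pi Y_{nk}^{*} Y\,dx$ yields $(\la - \la_{nk})\,c_{nk}(\la) = v_{nk}^{*} v$, which gives the claimed expansion.

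Second, I would take residues at $\la = \la_{nk}$. Since the multiplicity of every eigenvalue is at most $m$, only finitely many indices $(l,s)$ satisfy $\la_{ls} = \la_{nk}$, so
$$
\Res_{\la=\la_{nk}} \Phi(x,\la)\,v \;=\; \sum_{(l,s)\colon \la_{ls}=\la_{nk}} Y_{ls}(x)\,v_{ls}^{*} v,
$$
which is a finite sum of smooth vector functions and therefore admits pointwise evaluation. Setting $x=0$ and using $Y_{ls}(0) = \varphi(0,\la_{ls})\,v_{ls} = v_{ls}$ by \eqref{icvv}, then letting $v$ range over $\mathbb C^m$, I obtain the matrix identity \eqref{findal}.

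The only technical subtlety I foresee — and the reason to compute the residue before evaluating at $x=0$ — is that the Fourier series for $\Phi(x,\la)v$ converges a priori only in $L_2$ in the $x$-variable, so a direct substitution $x=0$ in the full series is not legitimate. Taking the residue first reduces the series to a finite sum of smooth eigenfunctions, which fixes this issue. The meromorphic structure of $\Phi(x,\la)$ in $\la$ with simple poles precisely at $\{\la_{nk}\}$ is built into the definition of the Weyl matrix via \eqref{bcPhi} and is the standard fact underlying Proposition~\ref{prop:uniqw}, so it needs no separate verification here; the integration-by-parts step and the passage to the residue are routine.
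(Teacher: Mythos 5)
Your proof is correct, but it takes a genuinely different route from the paper. You derive the eigenfunction expansion $\Phi(x,\la)v=\sum_{n,k}(\la-\la_{nk})^{-1}Y_{nk}(x)\,v_{nk}^*v$ by integrating by parts against the orthonormal eigenbasis (the boundary-term bookkeeping at $x=0$ and $x=\pi$ is exactly right, using $h=h^*$, $H=H^*$ and $Y_{nk}(0)=v_{nk}$), then extract the residue at $\la=\la_{nk}$ and evaluate at $x=0$, where $\Phi(0,\la)=M(\la)$. This is the classical resolvent/Green's-function argument, and your handling of the $L_2$-versus-pointwise issue is sound: the residue is a finite sum of continuous functions, and the interchange of the contour integral with the $L_2$-convergent series is justified because the tail coefficients are $O(l^{-2})$ uniformly on a small circle around $\la_{nk}$. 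The paper instead argues algebraically: it starts from the structural properties \eqref{structal} of the weight matrices, writes $\al_{nk}=\sum u_{ls}u_{ls}^*$ with mutually orthogonal $u_{ls}$, invokes the orthogonality relations \eqref{al1}--\eqref{al2} from \cite{Bond11} to show that $\{\vv(x,\la_{nk})u_{nk}\}$ is an orthonormal eigenbasis, and then relates an arbitrary choice of norming vectors to the $u_{ls}$ by a unitary block transform $U_{nk}=V_{nk}S_{nk}$, so that $V_{nk}V_{nk}^*=U_{nk}U_{nk}^*=\al_{nk}$. The paper's route leans on previously established residue identities and makes the invariance under the non-unique choice of $\{v_{nk}\}$ explicit via the unitary factor; your route is self-contained, gets the "any orthonormal eigenbasis" statement for free since the expansion is written directly in the chosen basis, and as a by-product re-derives that the poles of $M(\la)$ are simple. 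Both are complete proofs.
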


\begin{proof}
According to the properties \eqref{structal}, each weight matrix can be expanded as
\begin{equation} \label{findalu}
\al_u = \sum_{w \colon \la_w = \la_u} g_w g_w^*,
\end{equation}
where $\{ g_w \}$ are non-zero mutually orthogonal column vectors.

Lemma~5 in \cite{Bond11} implies the relations
\begin{align} \label{al1}
& \al_u \int_0^{\pi} \vv^*(x, \la_u) \vv(x, \la_u) \, dx \, \al_u  = \al_u, \\ \label{al2}
& \al_w \int_0^{\pi} \vv^*(x, \la_w) \vv(x, \la_u) \, dx \, \al_u  = 0_m, \quad \la_w \ne \la_u.
\end{align}

Substituting the representation \eqref{findalu} into \eqref{al1} and \eqref{al2} and using the orthogonality of the vectors $\{ g_u \}$ corresponding to the same weight matrix, we obtain
\begin{align} \label{Evv1}
& g^*_u \int_0^{\pi} \vv^*(x, \la_u) \vv(x, \la_u) \, dx \, g_u = 1, \\ \label{Evv2}
& g^*_w \int_0^{\pi} \vv^*(x, \la_w) \vv(x, \la_u) \, dx \, g_u = 0, \quad w \ne u.
\end{align}

Furthermore, from the proof of \cite[Lemma~5]{Bond11}, we have 
$$
(\vv'(\pi, \la_u) + H \vv(\pi, \la_u)) \al_u = 0_m.
$$
This together with \eqref{findalu}, \eqref{Evv1}, and \eqref{Evv2} imply that $\{ \vv(x, \la_u) g_u \}_J$ are orthonormal eigenfunctions of the problem $\mathcal L$.

Let $\{ \vv(x, \la_u) v_u \}_J$ be another sequence of orthonormal eigenfunctions ($v_u \in \mathbb C^m$). For fixed $u$, denote by $G_u$ and $V_u$ the $(m \times r)$-matrices composed of the columns $\{ g_w \}$ and $\{ v_w \}$, respectively, for $w$ such that $\la_w = \la_u$. Here $r$ is the multiplicity of the eigenvalue $\la_u$. Then
$G_u = V_u S_u$, where $S_u$ is some unitary $(r \times r)$ matrix. Consequently, the relation \eqref{findalu} can be rewritten as
$$
\al_u = G_u G_u^* = V_u V_u^*,
$$
which is equivalent to \eqref{findal}.
\end{proof}

Proposition~\ref{prop:uniqw} and Lemma~\ref{lem:relval} immediately yield Theorem~\ref{thm:uniq}.

\begin{lem} \label{lem:asymptbe}
The asymptotics \eqref{asymptV} is equivalent to \eqref{asymptbe}. Moreover, the estimates $\| \{ K_n \} \|_{l_2} \le \Omega$ and $\| \{ \mathscr K_n \} \|_{l_2} \le \Omega$ are equivalent to each other.
\end{lem}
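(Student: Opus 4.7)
The plan is to identify $\be_n$ with $V_n V_n^*$ for all sufficiently large $n$ via Lemma~\ref{lem:relval}, and then to exploit the standard linear-algebraic fact that $V_n V_n^*$ and $V_n^* V_n$ have identical distance from any scalar multiple of $I_m$ in the operator norm.

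The first step is the identification $\be_n = V_n V_n^*$ for $n$ large enough. From \eqref{asymptla} one readily chooses $N_0 \ge 1$ (depending only on the $l_2$-bound of $\{\varkappa_{nk}\}$) such that no $\la_{nk}$ with $n \ge N_0$ coincides with any $\la_{n'k'}$ having $n' \ne n$. Hence, for such $n$, every maximal group of multiple eigenvalues meeting row $n$ is entirely contained in $\{(n,k) : k = \overline{1,m}\}$. For a group $G$ with smallest member $(n,k_0)$, Definition~\ref{def:E} gives $\al'_{n,k_0} = \al_{n,k_0}$ and $\al'_{n,k} = 0_m$ for $k \in G \setminus \{k_0\}$, while Lemma~\ref{lem:relval} yields $\al_{n,k_0} = \sum_{k \in G} v_{nk} v_{nk}^*$. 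Summing over all groups, which partition $\{1, \ldots, m\}$, produces
\[
\be_n = \sum_{k=1}^m \al'_{nk} = \sum_{k=1}^m v_{nk} v_{nk}^* = V_n V_n^*, \qquad n \ge N_0.
\]

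In the second step I would invoke the singular value decomposition $V_n = U_n \Sigma_n W_n^*$: both $V_n V_n^* = U_n \Sigma_n^2 U_n^*$ and $V_n^* V_n = W_n \Sigma_n^2 W_n^*$ are unitarily similar to $\Sigma_n^2$, so $\|V_n V_n^* - \tfrac{2}{\pi} I_m\| = \|V_n^* V_n - \tfrac{2}{\pi} I_m\|$. Combined with Step~1, this gives $\|\mathscr K_n\| = \|K_n\|$ for every $n \ge N_0$, from which the equivalence of the $l_2$-memberships of $\{K_n\}$ and $\{\mathscr K_n\}$, and of the norm estimates $\| \cdot \|_{l_2} \le \Omega$, follows after absorbing the finitely many indices $n < N_0$ whose contribution is bounded by a constant depending only on $\Omega$ (since a bound on $\{K_n\}$ or $\{\mathscr K_n\}$ in $l_2$ entails a uniform bound on $\|V_n\|$, and hence on $\|\be_n\|$ and $\|V_n^* V_n\|$). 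The main technical effort is the bookkeeping in Step~1 together with ruling out cross-$n$ coincidences; Step~2 is essentially a one-line SVD argument.
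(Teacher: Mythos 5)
Your proposal follows essentially the same route as the paper's proof: identify $\be_n$ with $V_n V_n^*$ via \eqref{findal} and Definition~\ref{def:E}, then use the singular value decomposition to transfer the asymptotics between $V_n V_n^*$ and $V_n^* V_n$. The only difference is that you explicitly address the possibility of a group of multiple eigenvalues straddling two consecutive rows (for which $\be_n \ne V_n V_n^*$) by restricting to $n \ge N_0(\Omega)$ and absorbing the finitely many exceptional indices into the constant, a point the paper's one-line identification passes over silently; this is a legitimate refinement rather than a different argument.
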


\begin{proof}
The definition of $\be_n$ in \eqref{asymptbe} and \eqref{findal} imply $\be_n = V_n V_n^*$, so \eqref{asymptbe} can be rewritten as
$$
V_n V_n^* = \frac{2}{\pi} I_m + \frac{\mathscr K_n}{n}, \quad \{ \mathscr K_n \} \in l_2(\mathbb C^{m \times m}), \quad n \ge 1.
$$
Then, the assertion of the lemma is obtained by applying 
the singular value decomposition of~$V_n$.
\end{proof}

\begin{proof}[Proof of Theorem~\ref{thm:char}]
In view of \eqref{findal}, the conditions of Theorem~\ref{thm:char} are equivalent to the ones of Proposition~\ref{prop:charw}. Indeed, the relations \eqref{structal} follow from \eqref{findal} and the linear independence of the vectors $\{ v_w \colon \la_w = \la_u \}$. The asymptotics \eqref{asymptV} and \eqref{asymptbe} are equivalent by Lemma~\ref{lem:asymptbe}.
Finally, it follows from \eqref{findal} and Definition~\ref{def:E} that
$$
\mbox{span} \, \{ v_w \colon \la_w = \la_u \} = \mbox{span} \, \{ \mathscr E_w \colon \la_w = \la_u \}.
$$
Consequently, the completeness conditions for the sequences $\{ v_u \cos(\rho_u x) \}_J$ and
$\{ \mathscr E_u \cos(\rho_u x) \}_J$ are equivalent to each other, which concludes the proof.
\end{proof}

For illustrating the non-triviality of condition~2 in Theorem~\ref{thm:char}, consider an example.

\begin{example} \label{ex:fail}
Let $m = 2$, $\la_{11} \ne \la_{12}$, $\la_{n1} = \la_{n2} = (n-1)^2$ for $n \ge 2$, and
$$
   v_{11} = v_{12} = \sqrt{\frac{1}{\pi}}\begin{bmatrix}
                              1 \\ 0
			\end{bmatrix}, \quad
   v_{n1} = \sqrt{\frac{2}{\pi}}\begin{bmatrix}
                              1 \\ 0
			\end{bmatrix}, \quad 
   v_{n2} = \sqrt{\frac{2}{\pi}}\begin{bmatrix}
                              0 \\ 1
			\end{bmatrix}, \quad n \ge 2.
$$
Since $\la_{nk}$ and $v_{nk}$ for $n \ge 2$ coincide with the spectral data of the zero problem of Example~\ref{ex:zero}, condition~1 of Theorem~\ref{thm:char} is fulfilled. The sequence from condition~2 takes the form
\begin{gather*}
   \chi_{11} = \sqrt{\frac{1}{\pi}} \begin{bmatrix} 1 \\ 0 \end{bmatrix} \cos (\rho_{11} x), \quad
   \chi_{12} = \sqrt{\frac{1}{\pi}} \begin{bmatrix} 1 \\ 0 \end{bmatrix} \cos (\rho_{12} x), \\
   \chi_{n1} = \sqrt{\frac{2}{\pi}} \begin{bmatrix} 1 \\ 0 \end{bmatrix} \cos ((n-1)x), \quad
   \chi_{n2} = \sqrt{\frac{2}{\pi}} \begin{bmatrix} 0 \\ 1 \end{bmatrix} \cos ((n-1)x), \quad n \ge 2.
\end{gather*}
Obviously, the vector $\begin{bmatrix} 0 \\ 1 \end{bmatrix}$ is orthogonal to all $\chi_{nk}$ in $L_2((0,\pi); \mathbb C^2)$, so condition~2 of Theorem~\ref{thm:char} is violated. Hence, the given values $\{ \la_u, v_u \}_J$ are not the spectral data of a problem $\mathcal L$ of form \eqref{eqv}--\eqref{bc}.
\end{example}

\section{Bounds for the direct problem} \label{sec:bounddir}

In this section, we present the proof of Theorem~\ref{thm:bounddir}. The most complicated part is the proof of the lower estimate \eqref{RBbound}. The main idea consists in the passage to the limit in the Sobolev space $W_2^{\al}$, $\frac{1}{2} < \al < 1$, and the application of the compact embedding $W_2^1[0,\pi] \subset W_2^{\al}[0,\pi]$. More information about the scale of the Sobolev spaces $W_2^{\al}$ with non-integer indices $\al$ and their usage in the inverse spectral theory for the Sturm-Liouville operators can be found in \cite{Trieb78, SS10}.

\begin{proof}[Proof of Theorem~\ref{thm:bounddir}]
Fix $R > 0$ and consider $(Q, h, H) \in \mathcal P_R$.
The eigenvalue asymptotics \eqref{asymptla} have been derived in \cite{Bond11}, and the uniform estimate $\| \{ \varkappa_{nk} \} \|_{l_2} \le C(R)$ follows from those arguments, so we omit the proof. The asymptotic relation \eqref{asymptV} follows from \eqref{asymptbe} and Lemma~\ref{lem:asymptbe}. The asymptotics \eqref{asymptbe} has been proved in \cite[Lemma~3]{Bond19} (see also \cite[Proposition~2.5]{CK09} for the case of the Dirichlet boundary conditions). Those proofs also readily imply the uniform estimate $\| \{ \mathscr K_n \} \|_{l_2} \le C(R)$, which yields $\| \{ K_n \} \|_{l_2} \le C(R)$ according to Lemma~\ref{lem:asymptbe}. 

Proceed to the proof of relation~\eqref{RBbound}.
According to Remark~\ref{rem:Riesz}, the sequence $\{ v_u \cos(\rho_u x) \}_J$ is a Riesz basis in $L_2((0,\pi); \mathbb C^m)$. Hence, the estimate \eqref{RBbound} is valid with $\eps = \eps(Q, h, H) > 0$. It remains to show that this bound is uniform on $\mathcal P_R$. The proof of the latter fact splits into several steps.

\underline{\textsc{Step 1}: Passing to the limit}.
Suppose that, on the contrary, there exists a sequence $\{ (Q_p, h_p, H_p) \}_{p \ge 1} \subset \mathcal P_R$ such that $\eps(Q_p, h_p, H_p)$ tends to zero as $p \to \infty$. 
Represent the corresponding matrix Sturm-Liouville problems in the form \eqref{eqsi}--\eqref{bcsi}, that is, define $\sigma_p$ and $\check{H}_p$ using $(Q_p, h_p, H_p)$ via \eqref{defsi}. Then, the pairs $(\sigma_p, \check{H}_p)$ ($p \ge 1$) belong to the ball
\begin{equation} \label{ballD}
\| \sigma \|_{W_2^1} + \| \check{H} \| \le D
\end{equation}
in the Banach space $\mathfrak B^1 := W_2^1([0,\pi]; \mathbb C^{m \times m}) \times \mathbb C^{m \times m}$, where the radius $D >0$ depends only on $R$.

Any ball in $W_2^1[0,\pi]$ is weakly compact, so the ball \eqref{ballD} is weakly compact in $\mathfrak B^1$. Therefore, there is a subsequence of $\{ (\sigma_p, \check H_p) \}_{p \ge 1}$ that weakly converges in $\mathfrak B^1$ to a limit $(\sigma, \check H)$ satisfying \eqref{ballD}. Without loss of generality, let $\{ (\sigma_p, \check H_p) \}_{p \ge 1}$ be such a subsequence.
The Sobolev space $W_2^1[0,\pi]$ is compactly embedded in $W_2^{\al}[0,\pi]$ for $\al < 1$ (see \cite{Trieb78, SS10}). Consequently, the sequence $\{ (\sigma_p, \check H_p) \}_{p \ge 1}$ strongly converges to $(\sigma, \check H)$ in $W_2^{\al}([0,\pi]; \mathbb C^{m \times m}) \times \mathbb C^{m \times m}$. 
Moreover, if we choose $\al > 1/2$, then the space $W_2^{\al}[0,\pi]$ is continuously embedded in the space $C[0,\pi]$ of continuous functions. Therefore, the convergence in $W_2^{\al}[0,\pi]$ implies the point-wise convergence. Consequently, $\sigma_p(0) \to \sigma(0)$ and $\sigma_p(\pi) \to \sigma(\pi)$ as $p \to \infty$. Recover $Q$, $h$, and $H$ from $\sigma$ and $\check{H}$ using formula \eqref{defsi}. Then, $h_p \to h$ and $H_p \to H$ as $p \to \infty$, so the relations
$$
h_p + H_p + \frac{1}{2} \int_0^{\pi} Q_p(x) \, dx = 0_m, \quad p \ge 1,
$$
imply \eqref{omega0}. Moreover, the conditions \eqref{sa} are fulfilled. Thus, $(Q, h, H) \in \mathcal P_R$.

Introduce the notations $\chi_u := v_u \cos(\rho_u x)$, $\chi_u^{(p)} := v_u^{(p)} \cos(\rho_u x)$, $u \in J$.
Let $a = \{ a_u \}_J$ be an arbitrary sequence of $l_2$.
Consider the series
\begin{equation} \label{sumser}
\sum_{u \in J} a_{nk} \chi_{nk}^{(p)} = \sum_{n,k \colon n \le N} a_{nk} \chi_{nk}^{(p)} + \sum_{n,k \colon n > N} a_{nk} (\chi_{nk}^{(p)} - \mathring{\chi}_{nk}^{(p)}) + \sum_{n,k \colon n > N} a_{nk} \mathring{\chi}_{nk}^{(p)},
\end{equation}
where $N > 0$ is some integer and $\mathring{\chi}_{nk}^{(p)}(x) := v_{nk}^{(p)} \cos ((n - 1) x)$. 

\smallskip

\underline{\textsc{Step 2:} Estimating terms for $n > N$}.
Using \eqref{asymptla}, \eqref{boundabove}, and the upper bound $\| v_{nk}^{(p)} \| \le C(R)$, we obtain the estimate
\begin{equation} \label{esta}
\left\| \sum_{n,k \colon n > N} a_{nk} (\chi_{nk}^{(p)} - \mathring{\chi}_{nk}^{(p)})\right\|_{L_2} \le \frac{C \| a \|_{l_2}}{N}, \quad p \ge 1.
\end{equation}
Next, represent the last series in \eqref{sumser} as follows:
$$
\sum_{n,k \colon n > N} a_{nk} \mathring{\chi}_{nk}^{(p)} = \sum_{n > N} V_n^{(p)} a_n \cos ((n-1)x) = \sum_{n > N} V_n b_n^{(p)} \cos ((n-1)x),
$$
where $a_n = [a_{nk}]_{k = 1}^m$ is the column vector and
\begin{equation} \label{defbnlarge}
b_n^{(p)} = [b_{nk}^{(p)}]_{k = 1}^m := V_n^{-1} V_n^{(p)} a_n.
\end{equation}
Using \eqref{asymptV} and \eqref{boundabove}, we get the relation
\begin{equation} \label{transab}
\| b_n^{(p)} \| = \| a_n \| (1 + \tau_n^{(p)}), \quad \| \{ n \tau_n^{(p)} \} \|_{l_2} \le C,
\end{equation}
where the estimate is uniform by $p \ge 1$.
Thus, we get
\begin{equation} \label{sumser1}
\sum_{n,k \colon n > N} a_{nk} \mathring{\chi}_{nk}^{(p)} = \sum_{n,k \colon n > N} b_{nk}^{(p)} \mathring{\chi}_{nk} = \sum_{n,k \colon n > N} b_{nk}^{(p)} (\mathring{\chi}_{nk} - \chi_{nk}) +  \sum_{n,k \colon n > N} b_{nk}^{(p)} \chi_{nk},
\end{equation}
where $\mathring{\chi}_{nk} := v_{nk} \cos((n-1)x)$. Similarly to \eqref{esta}, there holds
\begin{equation} \label{estb}
\left\| \sum_{n,k \colon n > N} b_{nk}^{(p)} (\mathring{\chi}_{nk} - \chi_{nk})\right\|_{L_2} \le \frac{C \| \{ b_{nk}^{(p)} \}_{n > N} \|_{l_2}}{N}, \quad p \ge 1.
\end{equation}
In view of \eqref{transab}, the vector $\{ b_{nk}^{(p)} \}$ in the right-hand side of \eqref{estb} can be replaced by $a$.

Let $\eta > 0$ be fixed. Choose an index $N$ (depending only on $R$ and $\eta$) such that
\begin{gather} \label{eta1}
\left\| \sum_{n,k \colon n > N} a_{nk} (\chi_{nk}^{(p)} - \mathring{\chi}_{nk}^{(p)})\right\|_{L_2} \le \frac{\eta}{3} \| a \|_{l_2}, \quad
\left\| \sum_{n,k \colon n > N} b_{nk}^{(p)} (\mathring{\chi}_{nk} - \chi_{nk})\right\|_{L_2} \le \frac{\eta}{3} \| a \|_{l_2}, \\ \label{eta2}
\| \{ b_{nk}^{(p)} \}_{n > N} \|_{l_2} \ge  (1 - \eta) \| \{ a_{nk} \}_{n > N} \|_{l_2},
\end{gather}
for every $p \ge 1$, which is possible due to the estimates \eqref{esta}, \eqref{transab}, and \eqref{estb}.

\smallskip

\underline{\textsc{Step 3:} Estimating terms for $n \le N$.}
Extract a subsequence of the considered sequence $\{(Q_p, h_p, H_p)\}_{p \ge 1}$ such that $v_{nk}^{(p)} \to v_{nk}^{\diamond}$ as $p \to \infty$ for $n \le N$ and $k = \overline{1,m}$. 

By virtue of \cite[Lemma 6.7]{Bond21-amp} and \eqref{findal}, we have
\begin{equation} \label{limp}
\lim_{p \to \infty} \la_u^{(p)} = \la_u, \quad
\lim_{p \to \infty} \sum_{w \colon \la_w = \la_u} v^{(p)}_w (v_w^{(p)})^* = \al_u.
\end{equation}
for each fixed $u \in J$. 
Hence 
$$
\lim_{p \to \infty}\chi_{nk}^{(p)} = \chi_{nk}^{\diamond} :=  v_{nk}^{\diamond} \cos \rho_{nk} x,
\quad n \le N, \: k = \overline{1,m}. 
$$

Represent the first sum in the right-hand side of \eqref{sumser} in the form
\begin{gather} \label{sumser2}
\sum_{n,k \colon n \le N} a_{nk} \chi_{nk}^{(p)} = \Sigma_1 + \Sigma_2, \quad 
\Sigma_1 := \sum_{n,k \colon n \le N} a_{nk} (\chi_{nk}^{(p)} - \chi_{nk}^{\diamond}), \quad \Sigma_2 := \sum_{n,k \colon n \le N} a_{nk} v_{nk}^{\diamond} \cos \rho_{nk} x.
\end{gather}

Obviously, the sum $\Sigma_1$ tends to zero as $p \to \infty$. Choose $p$ so large that 
\begin{equation} \label{estSi}
\| \Sigma_1 \|_{L_2} \le \frac{\eta}{3} \| a \|_{l_2}.
\end{equation}

It remains to investigate $\Sigma_2$.
Fix $u = (n, k)$, $n \le N$. Denote 
$$
\mathscr J_u := \{ w \colon \la_w = \la_u \}.
$$
The second relation in \eqref{limp} implies
$$
\sum_{w \in \mathscr J_u} v_w^{\diamond} (v_w^{\diamond})^* = \al_u.
$$
Due to Proposition~\ref{prop:charw}, $\rank \al_u$ coincides with the multiplicity of the eigenvalue $\la_u$, which equals cardinality of the set $\mathscr J_u$.
Therefore, the vectors $\{ v_w^{\diamond}\}_{\mathscr J_u}$ are linearly independent. Taking \eqref{findal} into account, we have
$$
\mbox{Ran} \, \al_u = \mbox{span} \{ v_w^{\diamond} \}_{\mathscr J_u} = \mbox{span} \{ v_w \}_{\mathscr J_u}. 
$$
Consequently, for each vector $\{ a_w \}_{\mathscr J_u}$, there exists a vector $\{ b_w \}_{\mathscr J_u}$ such that
\begin{equation} \label{combab}
\sum_{w \in \mathscr J_u} a_w v_w^{\diamond} = \sum_{w \in \mathscr J_u} b_w v_w, 
\end{equation}
moreover, 
\begin{equation} \label{bondb1}
\| \{ b_w \}_{\mathscr J_u} \| \ge c \| \{a_w \}_{\mathscr J_u} \|, \quad c > 0.
\end{equation}
Thus
\begin{equation} \label{sumser5}
\Sigma_2 = \sum_{n,k \colon n \le N} b_{nk} v_{nk} \cos \rho_{nk} x = \sum_{n,k \colon n \le N} b_{nk} \chi_{nk},
\end{equation}
where 
\begin{equation} \label{bondb2}
\| \{ b_{nk} \}_{n \le N} \|_{l_2} \ge c \| \{ a_{nk} \}_{n \le N} \|_{l_2}, \quad c > 0.
\end{equation}

Let us show that the constant $c$ in  \eqref{bondb2} does not depend on $N$. Indeed, according to \eqref{asymptV}, the sequence $\{ V_n^{(p)} \}_{n \ge 1}$ for each $p \ge 1$ satisfy the estimate
$$
\left\| (V_n^{(p)})^* V_n^{(p)} - \frac{2}{\pi} I_m \right\| \le \frac{C(R)}{n}, \quad n \ge 1,
$$
and so do $\{ V_n^{\diamond} \}_{n \ge 1}$. Roughly speaking, for sufficiently large $n$, the vectors $\left\{ \sqrt{\frac{\pi}{2}}v_{nk}^{(p)} \right\}_{k = 1}^m$ and $\left\{ \sqrt{\frac{\pi}{2}}v_{nk}^{\diamond} \right\}_{k = 1}^m$ are uniformly ``nearly orthonormal''. In view of \eqref{combab}, this implies
$$
\| \{ b_{ls} \}_{\mathscr J_u} \| = \bigl(1 + O(l^{-1})\bigr) \| \{ a_{ls} \}_{\mathscr J_u} \|,
$$
so the estimate \eqref{bondb2} holds with the same $c > 0$ for all $\mathscr J_u$, $u = (n,k)$ with sufficiently large~$n$.

\smallskip

\textsc{\underline{Step 4.}}
Combining \eqref{sumser}, \eqref{sumser1}, \eqref{sumser2}, and \eqref{sumser5}, we arrive at the relation
\begin{align} \nonumber
\sum_{n,k} a_{nk} \chi_{nk}^{(p)} = & \sum_{n,k} b_{nk} \chi_{nk} + \sum_{n,k \colon n \le N} a_{nk} (\chi_{nk}^{(p)} - \chi_{nk}^{\diamond}) \\ \label{sumser6} & + \sum_{n,k \colon n > N} a_{nk} (\chi_{nk}^{(p)} -\mathring{\chi}_{nk}^{(p)}) + \sum_{n,k \colon n > N} b_{nk} (\mathring{\chi}_{nk} - \chi_{nk})
\end{align}
where the numbers $b_{nk}$ ($k = \overline{1,m}$) are defined by \eqref{combab} for $n \le N$ and $b_{nk} := b_{nk}^{(p)}$ are given by \eqref{defbnlarge} for $n > N$.

Using \eqref{eta1}, \eqref{estSi}, and \eqref{sumser6}, we conclude that
\begin{equation} \label{RB1}
\left\| \sum_{u \in J} a_u \chi_u^{(p)} \right\|_{L_2} \ge \left\| \sum_{u \in J} b_u \chi_u \right\|_{L_2} - \eta \| a \|_{l_2}.
\end{equation}
The inequality \eqref{RBbound} for $(Q, h, H)$ implies
\begin{equation} \label{RB2}
\left\| \sum_{u \in J} b_u \chi_u \right\|_{L_2} \ge \eps \| \{ b_u \} \|_{l_2}, \quad \eps = \eps(Q, h,H) > 0.
\end{equation}

Combining \eqref{eta2}, \eqref{bondb2}, \eqref{RB1}, and \eqref{RB2}, we arrive at the estimate
$$
\left\| \sum_{u \in J} a_u \chi_u^{(p)} \right\|_{L_2} \ge (\eps \min \{ 1 - \eta, c \} -\eta) \| a \|_{l_2}.
$$
Since $\eta > 0$ can be chosen arbitrarily small, the latter estimate implies that the constants $\eps(Q_p, h_p, H_p)$ are bounded from below by a positive number as $p \to \infty$. This concludes the proof.
\end{proof}

\section{Main equation} \label{sec:maineq}

In this section, we derive the main equation of Inverse Problem~\ref{ip:1}, relying on the ideas of the method of spectral mappings \cite{Yur02}. For this purpose, we consider a problem $\mathcal L = \mathcal L(Q, h, H)$ with $(Q, h, H) \in\mathcal P$ and the model problem $\tilde{\mathcal L} = \mathcal L(0_m, 0_m, 0_m)$. The relations for the corresponding solutions $\vv(x, \la)$ and $\tilde \vv(x, \la)$ at the points of the two spectra are transformed to a linear equation in a suitable Banach space of infinite matrix sequences. 
Our construction of the main equation differs from the ones in the previous studies \cite{Yur06, Bond11, Bond19, Bond21-amp}. Specifically, we apply a modification that makes the operator in the main equation to be continuous w.r.t. the spectral data. For the scalar case ($m = 1$), this modification was introduced in \cite{Bond24}. Instead of the norming vectors $v_u$, it will be convenient for us to use the $\be_u := v_u v_u^*$, $u \in J$. In view of \eqref{asymptbe} and \eqref{findal}, there holds
$$
\be_n = \sum_{k = 1}^m \be_{nk}, \quad n \ge 1.
$$

Due to Example~\ref{ex:zero}, the spectral data of the model problem $\tilde{\mathcal L}$ satisfy the relations
\begin{gather*}
\sqrt{\tilde \la_{nk}} = \tilde \rho_{nk} = n-1, \quad n \ge 1, \quad k = \overline{1,m}, \\ 
\tilde \be_n = \sum_{k = 1}^m \tilde \be_{nk} = 
\begin{cases}
\frac{1}{\pi} I_m, & n = 1, \\
\frac{2}{\pi} I_m, & n \ge 2.
\end{cases}
\end{gather*}

For brevity, denote
$$
\tilde \rho_n := \tilde \rho_{nk}, \quad \tilde \la_n := \tilde \la_{nk}, \quad \hat \rho_{nk} := \rho_{nk} - \tilde \rho_n, \quad n \ge 1, \: k = \overline{1,m}.
$$

Obviously, we have
\begin{equation} \label{defvvt}
\tilde \vv(x, \rho^2) = \cos \rho x I_m.
\end{equation}
Introduce the matrix functions
\begin{gather} \label{defDt}
\tilde D(x, \mu, \la) := \int_0^x \tilde \vv(t, \mu) \tilde \vv(t, \la) \, dt, \\  \label{defwn}
w_n(x, \rho) := \frac{\vv(x, \rho^2) - \vv(x, \tilde \rho_n^2)}{\rho - \tilde \rho_n}, \quad \tilde w_n(x, \rho) := \frac{\tilde \vv(x, \rho^2) - \tilde \vv(x, \tilde \rho_n^2)}{\rho - \tilde \rho_n}, \\ \label{defWnt}
\tilde W_n(x, \theta, \rho) := \frac{\tilde D(x, \theta^2, \rho^2) - \tilde D(x, \theta^2, \tilde \rho_n^2)}{\rho - \tilde \rho_n}, \quad n \ge 1.
\end{gather}

The function $\tilde D(x, \mu, \la)$ is analytic in $\mu$ and $\la$, the functions $w_n(x, \rho)$ and $\tilde w_n(x, \rho)$, in $\rho$, and $\tilde W_n(x, \theta, \rho)$, in $\theta$ and $\rho$ for each fixed $x \in [0,\pi]$.
In particular,
$$
w_n(x, \tilde \rho_n) = \frac{d}{d\rho} \vv(x, \rho^2)_{|\rho = \tilde \rho_n}, \quad
\tilde W_n(x, \theta, \tilde \rho_n) = \frac{d}{d\rho} \tilde D(x, \theta^2, \rho^2)_{|\rho = \tilde \rho_n}.
$$

We rely on the following proposition, which has been obtained by the contour integration of spectral mappings in the complex plane of the spectral parameter $\la$.

\begin{prop}[\hspace*{-3pt}\cite{Yur06}]
The following relation holds
\begin{equation} \label{relvv}
\tilde \vv(x, \la) = \vv(x, \la) + \sum_{l = 1}^{\infty} \left( \sum_{s = 1}^m \vv(x, \la_{ls}) \be_{ls} \tilde D(x, \la_{ls}, \la) - \vv(x, \tilde \la_l) \tilde \be_l \tilde D(x, \tilde \la_l, \la) \right),
\end{equation}
where the series converges with brackets absolutely and uniformly by $x \in [0,\pi]$ and $\la$ on compact sets.
\end{prop}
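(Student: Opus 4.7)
The plan is to prove \eqref{relvv} by contour integration in the complex $\la$-plane, following the method of spectral mappings. The central object is the difference of Weyl matrices $\hat M(\mu) := M(\mu) - \tilde M(\mu)$, which is meromorphic in $\mu$ with simple poles at the eigenvalues $\la_{nk}$ of $\mathcal L$ (with residues $\al_{nk}$) and at the eigenvalues $\tilde \la_n$ of $\tilde{\mathcal L}$ (with residues $-\tilde \al_n$). I would integrate the meromorphic matrix function $\mu \mapsto \vv(x, \mu) \hat M(\mu) \tilde D(x, \mu, \la)$ around a contour $\Gamma_N$ in the $\la$-plane enclosing exactly the poles $\la_{ls}$ for $l \le N$, $s = \overline{1,m}$, and $\tilde \la_l$ for $l \le N$; a natural choice is the image under $\la = \rho^2$ of the circle $|\rho| = N + \tfrac{1}{2}$, which by \eqref{asymptla} stays at a uniform positive distance from the combined spectrum for all sufficiently large $N$.

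First, I would apply the residue theorem to obtain
$$
I_N(x, \la) := \frac{1}{2\pi i} \oint_{\Gamma_N} \vv(x, \mu) \hat M(\mu) \tilde D(x, \mu, \la) \, d\mu = \sum_{l = 1}^N R_l(x, \la),
$$
where $R_l(x, \la)$ collects the residues from the $l$-th packet of eigenvalues. A repeated eigenvalue $\la_{\diamond}$ is a single simple pole of $M$ with residue $\al_{\diamond} = \sum_{(l',s') \colon \la_{l's'} = \la_{\diamond}} \be_{l's'}$ by Lemma~\ref{lem:relval}; since $\vv(x, \la_{ls})$ and $\tilde D(x, \la_{ls}, \la)$ coincide for coincident $\la_{ls}$, the contribution of this pole can be rewritten as $\sum_{s \colon \la_{ls} = \la_{\diamond}} \vv(x, \la_{ls}) \be_{ls} \tilde D(x, \la_{ls}, \la)$. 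Aggregating all poles with index $l$ and performing the analogous expansion for $\tilde M$, where $\tilde \al_l = \tilde \be_l$ for the model problem, $R_l(x, \la)$ equals exactly the $l$-th bracket in \eqref{relvv}.

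Second, I would show that $I_N(x, \la) \to \tilde \vv(x, \la) - \vv(x, \la)$ as $N \to \infty$, uniformly in $x \in [0,\pi]$ and on compact sets in $\la$. This rests on sharp asymptotics on $\Gamma_N$: the estimate \eqref{asymptvv} gives $\vv(x, \mu) = O(\exp(|\mbox{Im}\, \rho| x))$, the model solution $\tilde \vv = \cos(\rho x) I_m$ is explicit so $\tilde D(x, \mu, \la)$ is computable, and the Weyl matrix admits $M(\mu) = i \rho^{-1} I_m + o(\rho^{-1})$ off the spectrum. Splitting $\vv \hat M \tilde D = \vv M \tilde D - \vv \tilde M \tilde D$ and using $\vv(x, \mu) M(\mu) = \Phi(x, \mu) - S(x, \mu)$, where $\Phi$ decays exponentially in the upper half $\rho$-plane and $S$ is entire in $\mu$, one evaluates each Cauchy integral by pushing the contour to infinity, and the leading terms telescope to $\tilde \vv(x, \la) - \vv(x, \la)$ in the limit.

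The main obstacle is the second step: uniform control of the integrand along $\Gamma_N$ requires sharp asymptotics of $\vv(x, \mu)$ and of the Weyl matrix in the $\rho$-plane away from eigenvalues, and the identification of the limit exploits the specific trigonometric structure of $\tilde D$. Once these estimates are assembled, convergence with brackets in \eqref{relvv} follows automatically from the partition of the residue sum into packets by index $l$; absolute summability of the bracketed terms is ensured by \eqref{asymptla} and \eqref{asymptV}, which yield $R_l(x, \la) = O(l^{-2})$ as $l \to \infty$ uniformly on compacts.
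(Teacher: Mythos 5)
This proposition is imported by the paper from \cite{Yur06} without proof; the paper only records that it ``has been obtained by the contour integration of spectral mappings,'' and your proposal is precisely that argument: integrate $\vv(x,\mu)\hat M(\mu)\tilde D(x,\mu,\la)$ over expanding contours, collect the residues at the poles of $\hat M$ (using that the weight matrices split into $\sum v_{ls}v_{ls}^*$ over each eigenvalue group, which yields the bracketed terms), and identify the limit of the contour integrals with $\tilde\vv-\vv$ via the Weyl-solution decomposition $\vv M=\Phi-S$. The structure matches the source's proof; only cosmetic details (the sign in $M(\mu)\sim(i\rho)^{-1}$, the exact radius $|\rho|=N-\tfrac12$ needed to enclose exactly the packets $l\le N$) would need tightening in a full write-up.
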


Putting $\la = \la_{nk}$ and $\la = \tilde \la_n$ in \eqref{relvv}, we arrive at the infinite system of linear equations with respect to $\{ \vv(x, \la_{nk}), \vv(x, \tilde \la_n) \}$:
\begin{align}  \label{relvv1}
\tilde \vv(x, \la_{nk}) & = \vv(x, \la_{nk}) + \sum_{l = 1}^{\infty} \left( \sum_{s = 1}^m \vv(x, \la_{ls}) \be_{ls} \tilde D(x, \la_{ls}, \la_{nk}) - \vv(x, \tilde \la_l) \tilde \be_l \tilde D(x, \tilde \la_l, \la_{nk}) \right), \\ \label{relvv2}
\tilde \vv(x, \tilde \la_{n}) & = \vv(x, \tilde \la_{n}) + \sum_{l = 1}^{\infty} \left( \sum_{s = 1}^m \vv(x, \la_{ls}) \be_{ls} \tilde D(x, \la_{ls}, \tilde \la_{n}) - \vv(x, \tilde \la_l) \tilde \be_l \tilde D(x, \tilde \la_l, \tilde \la_{n}) \right), 
\end{align}
where $n \ge 1$, $k =\overline{1,m}$. The linear system \eqref{relvv1}--\eqref{relvv2} can be used for solving Inverse Problem~\ref{ip:1}. However, the series in \eqref{relvv1} and \eqref{relvv2} converge absolutely only ``with brackets'', so it is inconvenient to use them for our further analysis. Therefore, below we deduce from \eqref{relvv} the linear system with absolutely convergent series.

Define the matrix functions
\begin{equation} \label{defpsi}
\def\arraystretch{1.7}
\left.
\begin{array}{l}
\psi_{nk}(x) := w_n(x, \rho_{nk}), \quad \tilde \psi_{nk}(x) := \tilde w_n(x, \rho_{nk}), \quad k = \overline{1,m}, \\
\psi_{n,m+1}(x) := \vv(x, \tilde \la_n), \quad \tilde \psi_{n,m+1}(x) := \tilde \vv(x, \tilde \la_n), \quad n \ge 1.
\end{array}\qquad \right\}
\end{equation}

Using \eqref{relvv}, we obtain the following infinite system of equations with respect to $\{ \psi_{nk}(x) \}$:
\begin{equation} \label{sumpsi}
\tilde \psi_{nk}(x) = \psi_{nk}(x) + \sum_{l = 1}^{\infty} \sum_{s = 1}^{m + 1} \psi_{ls}(x) \tilde{\mathcal R}_{ls,nk}(x), \quad n \ge 1, \: k = \overline{1,m+1},
\end{equation}
where the matrix functions $\tilde{\mathcal R}_{ls,nk}(x)$ are defined as follows:
\begin{align} \label{defR}
\tilde{\mathcal R}_{ls,nk}(x) & := \hat \rho_{ls} \be_{ls} \tilde W_n(x, \rho_{ls}, \rho_{nk}), \quad s,k = \overline{1,m}, \\ \nonumber
\tilde{\mathcal R}_{l,m+1, nk}(x) & := \sum_{s = 1}^m \be_{ls} \tilde W_n(x, \rho_{ls}, \rho_{nk}) - \tilde \be_l \tilde W_n(x, \tilde \rho_l, \rho_{nk}), \quad k = \overline{1,m}, \\ \nonumber
\tilde{\mathcal R}_{ls, n, m+1}(x) & := \hat \rho_{ls} \be_{ls} \tilde D(x, \la_{ls}, \tilde \la_n), \quad s= \overline{1,m}, \\ \nonumber
\tilde{\mathcal R}_{l,m+1, n, m+1}(x) & := \sum_{s = 1}^m \be_{ls} \tilde D(x, \tilde \la_{ls}, \tilde \la_n) - \tilde \be_l \tilde D(x, \tilde \la_l, \tilde \la_n).
\end{align}

Let us analyze the convergence of the series \eqref{sumpsi}.
To characterize ``the difference'' between the spectral data of the problems $\mathcal L$ and $\tilde{\mathcal L}$, we introduce the quantities
\begin{equation} \label{defxi}
\xi_n := \sum_{k = 1}^m |\rho_{nk} - \tilde \rho_n| + \| \be_n - \tilde \be_n \|, \quad n \ge 1.
\end{equation}

By virtue of the asymptotics \eqref{asymptla} and \eqref{asymptbe}, the sequence $\{ n \xi_n \}_{n \ge 1}$ belongs to $l_2$. 
By the standard methods (see \cite[Section~1.3.1]{Yur02}), we obtain the estimates
\begin{gather} \label{estpsi}
\| \psi_{nk}(x) \| \le C, \\ \label{estpsiRt}
\| \tilde \psi_{nk}(x) \| \le C, \quad 
\| \tilde{\mathcal R}_{ls,nk}(x) \| \le \frac{C \xi_l}{|n-l| + 1},
\end{gather}
for $n,l \ge 1$, $k,s = \overline{1,m+1}$, $x \in [0,\pi]$. Moreover, if $\{ \la_{nk} \}$ and $\{ \be_n \}$ satisfy the asymptotics \eqref{asymptla} and \eqref{asymptbe}, respectively, and the corresponding remainders fulfill the uniform estimates
\begin{equation} \label{estOm}
\| \{ \varkappa_{nk} \} \|_{l_2} \le \Omega, \quad \| \{ \mathscr K_n \} \|_{l_2} \le \Omega,
\end{equation}
then \eqref{estpsiRt} holds with a constant $C = C(\Omega)$.

The estimates \eqref{estpsi} and \eqref{estpsiRt} imply that the series in \eqref{sumpsi} converges absolutely and uniformly by $x \in [0,\pi]$. Thus, the system \eqref{sumpsi} can be represented as a linear equation in a suitable Banach space.

Denote by $B$ the Banach space of bounded infinite sequences $a = \{ a_{nk} \}_{n \ge 1, \, k = \overline{1,m+1}}$, $a_{nk} \in \mathbb C^{m \times m}$, with the norm $\| a \|_B = \sup\limits_{n,k} \| a_{nk} \|$. For each fixed $x \in [0,\pi]$, introduce the linear operator $\tilde{\mathcal R}(x) \colon B \to B$ acting on an element $a \in B$ by the following rule:
\begin{equation} \label{opR}
(a \tilde{\mathcal R}(x))_{nk} := \sum_{l = 1}^{\infty} \sum_{s = 1}^{m+1} a_{ls} \tilde{\mathcal R}_{ls,nk}(x), \quad n \ge 1, \:k = \overline{1,m+1}.
\end{equation}

Thus, the action of the operator $\tilde{\mathcal R}(x)$ is the multiplication of an infinite $(m \times m)$-block row vector $a$ by the infinite $(m \times m)$-block matrix $[\tilde{\mathcal R}_{ls,nk}(x)]$. Following the notations of previous studies \cite{Yur06, Bond11, Bond19, Bond21-amp}, we write the operator $\tilde{\mathcal R}(x)$ to the right of operands to keep the correct order of non-commutative matrix multiplication.

It follows from \eqref{estpsi} and \eqref{estpsiRt} that $\psi(x) = \{ \psi_{nk}(x) \}_{n \ge 1, \, k = \overline{1,m+1}}$ and $\tilde \psi(x) = \{ \tilde \psi_{nk}(x) \}_{n \ge 1, \, k = \overline{1,m+1}}$ are elements of $B$ and the operator $\tilde{\mathcal R}(x)$ is bounded for each fixed $x \in [0,\pi]$:
$$
\| \tilde{\mathcal R}(x) \|_{B \to B} = \sup_{n,k} \sum_{l,s} \| \tilde{\mathcal R}_{ls,nk}(x) \| \le C \sup_{n\ge1} \sum_{l = 1}^{\infty} \frac{\xi_l}{|n-l|+1} < \infty.
$$

Denote by $I$ the identity operator in $B$. The reduction of Inverse Problem~\ref{ip:1} to a linear equation in $B$ is summarized in the following theorem.

\begin{thm} \label{thm:maineq}
Suppose that $\mathcal L = \mathcal L(Q, h, H)$, $(Q, h, H) \in \mathcal P$, $\tilde{\mathcal L} = \mathcal L(0_m,0_m,0_m)$, $\psi(x)$, $\tilde \psi(x)$, and $\tilde{\mathcal R}(x)$ are constructed as described above. Then, for each fixed $x \in [0,\pi]$, the vector $\psi(x) \in B$ satisfies the relation
\begin{equation} \label{main}
\tilde \psi(x) = \psi(x)(I + \tilde{\mathcal R}(x))
\end{equation}
in the Banach space $B$. Moreover, the operator $(I + \tilde{\mathcal R}(x))$ has a bounded inverse on $B$, so equation \eqref{main} is uniquely solvable.
\end{thm}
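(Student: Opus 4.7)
The first claim is a reformulation of \eqref{sumpsi} in $B$. The definitions \eqref{defpsi} and \eqref{opR} identify the coordinate-wise relations \eqref{sumpsi} with the operator equation \eqref{main}, while the estimates \eqref{estpsi}--\eqref{estpsiRt} guarantee $\psi(x), \tilde \psi(x) \in B$ and the boundedness of $\tilde{\mathcal R}(x)$ on $B$ (the latter because $\{n \xi_n\} \in l_2$ yields a uniformly bounded row sum of kernels). The difference-quotient constructions \eqref{defwn}--\eqref{defWnt} ensure that \eqref{sumpsi} makes sense even at coincidences $\rho_{nk} = \tilde \rho_n$ and $\rho_{ls} = \tilde \rho_l$, so no case analysis is needed.

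For the invertibility, my plan is to produce $(I + \tilde{\mathcal R}(x))^{-1}$ explicitly by symmetrizing the construction. The same contour-integration argument that yielded \eqref{relvv} applies in reverse, expanding $\vv(x, \la)$ over the eigenfunctions of $\tilde{\mathcal L}$ rather than of $\mathcal L$; this produces a dual identity of the form
\[
\vv(x, \la) = \tilde \vv(x, \la) + \sum_{l = 1}^{\infty} \Bigl( \sum_{s = 1}^m \tilde \vv(x, \tilde \la_{ls}) \tilde \be_{ls} D(x, \tilde \la_{ls}, \la) - \tilde \vv(x, \la_l) \be_l D(x, \la_l, \la) \Bigr)
\]
with $D(x, \mu, \la) := \int_0^x \vv(t, \mu) \vv(t, \la) \, dt$. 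Specializing $\la$ at the two spectra and repeating the manipulations of \eqref{relvv1}--\eqref{opR} produces a bounded operator $\mathcal R(x) \colon B \to B$ and a dual main equation $\psi(x) = \tilde \psi(x)(I + \mathcal R(x))$ in $B$.

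The decisive step is then the algebraic verification
\[
(I + \tilde{\mathcal R}(x))(I + \mathcal R(x)) = (I + \mathcal R(x))(I + \tilde{\mathcal R}(x)) = I \quad \text{on } B,
\]
carried out block-by-block. Expanding the composition and applying the Christoffel--Darboux identity
\[
(\la - \mu) \int_0^x \tilde \vv(t, \mu) \tilde \vv(t, \la) \, dt = \tilde \vv'(x, \la) \tilde \vv(x, \mu) - \tilde \vv(x, \la) \tilde \vv'(x, \mu),
\]
together with its $\vv$-counterpart, collapses the resulting double series: the off-diagonal contributions cancel because, after contour integration, they correspond to residues of a meromorphic function whose poles at the two spectra produce matching terms that annihilate each other. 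Once these operator identities are established, $(I + \tilde{\mathcal R}(x))^{-1} = I + \mathcal R(x)$ is a bounded operator on $B$, finishing the proof.

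The main obstacle is the bookkeeping of this composition. The extra column $s = m+1$, which encodes $\vv(x, \tilde \la_n)$ directly rather than through a difference quotient, mixes non-trivially with the other blocks and must be tracked separately; matrix non-commutativity further forbids the scalar simplifications of \cite{Bond24}. The modification of the main equation --- using the analytic quantities $w_n$, $\tilde w_n$, $\tilde W_n$ in place of the raw solutions --- is exactly what removes the apparent singularities at coincidences and thereby ensures that the block identities hold pointwise in $x$ and continuously in the spectral data, which is the property actually needed in the subsequent uniform-stability arguments.
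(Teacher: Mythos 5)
Your argument takes essentially the same route as the paper: the identity \eqref{main} is read off from \eqref{sumpsi} and \eqref{opR} exactly as you say, and for the invertibility the paper simply defers to \cite[Theorem~2]{Bond11}, whose proof is precisely the two-sided-inverse construction you outline (role-swapped dual relation, dual main equation, and block-wise verification that the composition equals the identity via residue and Green-type identities). One notational slip worth fixing: in your dual identity the inner sum over $s$ must sit on the $\mathcal L$-spectrum terms, i.e.\ the bracket should read $\tilde \vv(x, \tilde \la_l) \tilde \be_l D(x, \tilde \la_l, \la) - \sum_{s=1}^m \tilde \vv(x, \la_{ls}) \be_{ls} D(x, \la_{ls}, \la)$, because the eigenvalues $\la_{ls}$ of $\mathcal L$ generally do not coincide within a group and hence cannot be collapsed into a single $\la_l$, $\be_l$ the way the degenerate model eigenvalues $\tilde \la_l$, $\tilde \be_l$ can.
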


\begin{proof}
The relation \eqref{main} follows from \eqref{sumpsi} and \eqref{opR}. The invertibility of the operator $(I + \tilde{\mathcal R}(x))$ is proved similarly to \cite[Theorem~2]{Bond11}.
\end{proof}

The relation \eqref{main} is called \textit{the main equation} of Inverse Problem~\ref{ip:1}. Note that $\tilde \psi(x)$ and $\tilde{\mathcal R}(x)$ are constructed by using only the model problem $\tilde {\mathcal L}$ and the spectral data $\{ \la_u, \be_u \}_J$ of the boundary value problem $\mathcal L$. Using the solution $\psi(x)$ of the main equation \eqref{main}, one can find the matrix functions
\begin{equation} \label{recvv}
\vv(x, \la_{nk}) = \psi_{n,m+1}(x) + \hat \rho_{nk} \psi_{nk}(x), \quad k = \overline{1,m}, \quad \vv(x,\tilde \la_n) = \psi_{n,m+1}(x)
\end{equation}
and use them to recover $Q(x)$, $h$ and $H$.

\begin{remark}
Note that our construction differs from the ones provided in \cite{Bond11, Bond19}, since $\psi_{nk}(x)$ \eqref{defpsi} equals the derivative w.r.t. $\rho$ in the case $\rho_{nk} = \tilde \rho_n$. This modification of the classical scheme of the method of spectral mappings (see \cite{Yur02}) is important for the continuity of $\psi_{nk}(x)$, $\tilde \psi_{nk}(x)$, and $\tilde {\mathcal R}_{nk}(x)$ w.r.t. the spectral data (see \cite{Bond24}), which is important for investigating the stability. However, equation \eqref{main}, as well as the main equations in \cite{Bond11, Bond19}, is equivalent to the system \eqref{relvv1}--\eqref{relvv2} with respect to $\{ \vv(x, \la_{nk}), \vv(x,\tilde\la_n)\}$. Furthermore, due to \eqref{recvv}, the functions $\vv(x,\la_{nk})$ do not depend on $\psi_{nk}(x) = \dfrac{d}{d\rho} \vv(x,\rho^2)\big|_{\rho = \tilde\rho_n}$ in the case $\rho_{nk} = \tilde \rho_n$. For these reasons, the statement about the unique solvability of the main equation is transferred to the new construction without changes.
\end{remark}

\begin{remark} \label{rem:maineq}
According to \cite{Bond21-amp}, the assertion of Theorem~\ref{thm:maineq} is valid for the problem \eqref{eqsi}--\eqref{bcsi} with $\sigma \in L_2((0,\pi); \mathbb C^{m \times m})$. In this case, the matrix function $\vv(x, \la)$ is defined as the matrix solution of equation \eqref{eqsi} under the initial conditions \eqref{icvvsi}, and the other definitions remain the same as in this section.
\end{remark}

The matrix parameters $(Q, h, H)$ of $\mathcal L$ can be recovered from the solutions $\{ \varphi(x,\la_u),  \varphi(x, \tilde \la_u) \}_J$ by using the following proposition. For further needs, we formulate it in a more general form then the previous assertions of this section, assuming that the model problem $\tilde{\mathcal L}$ not necessarily equals $\mathcal L(0_m,0_m,0_m)$.

\begin{prop}[\hspace*{-3pt}\cite{Bond11}] \label{prop:relQhH}
Suppose that $(Q, h, H)$ and $(\tilde Q, \tilde h, \tilde H)$ belong to $\mathcal P$. Then
\begin{equation} \label{relQhH}
Q(x) = \tilde Q(x) -2 E_0'(x), \quad h = \tilde h - E_0(0), \quad
H = \tilde H + E_0(\pi),
\end{equation}
where
\begin{equation} \label{defE0}
E_0(x) := \sum_{u \in J} \bigl( \vv(x, \la_u) \be_u \tilde \vv^*(x, \la_u) - \vv(x, \tilde \la_u) \tilde \be_u \tilde \vv^*(x, \tilde \la_u) \bigr).
\end{equation}
The series in \eqref{defE0} converges in $W_2^1([0,\pi]; \mathbb C^{m \times m})$.
\end{prop}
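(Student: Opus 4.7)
\emph{Plan of proof.} My approach is to derive the reconstruction formulas \eqref{relQhH}--\eqref{defE0} by contour integration of a matrix-valued meromorphic function in the complex $\la$-plane, following the method of spectral mappings~\cite{Yur02, Yur06, Bond11}. Let $M(\la)$ and $\tilde M(\la)$ denote the Weyl matrices of $\mathcal L$ and $\tilde{\mathcal L}$ constructed via \eqref{bcPhi}. The key object is the cross product
\begin{equation*}
F(x,\la) := \vv(x,\la)\bigl[M(\la) - \tilde M(\la)\bigr]\tilde\vv^*(x,\bar\la),
\end{equation*}
which is meromorphic in $\la$ with simple poles at the distinct elements of $\{\la_{nk}\}\cup\{\tilde\la_{nk}\}$. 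By the definition of $\al_{nk}$ and Lemma~\ref{lem:relval}, the residue of $F$ at a distinct eigenvalue $\mu = \la_{nk}$ equals $\sum_{(l,s)\colon \la_{ls}=\mu}\vv(x,\la_{ls})\be_{ls}\tilde\vv^*(x,\la_{ls})$, and analogously for $\tilde\la_{nk}$ with a minus sign. After summing over classes, the residues reproduce precisely the series in~\eqref{defE0}, so the problem reduces to identifying $E_0(x)$ in terms of $Q-\tilde Q$, $h-\tilde h$, and $H-\tilde H$.

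Next, I would integrate $F(x,\la)$ along expanding contours $\Gamma_N = \{|\la|=r_N^2\}$ whose radii bypass the eigenvalue clusters (possible thanks to \eqref{asymptla}). The standard large-$\rho$ asymptotics of $\vv$, $\tilde\vv$, $M$, $\tilde M$ from~\cite{Bond11}, together with the normalization~\eqref{omega0} that simplifies the leading terms, show that the contour integrals admit an asymptotic expansion in $\rho^{-1}$. Matching orders produces a master identity relating $E_0(x)$ to the parameter differences. To isolate the potential relation $Q-\tilde Q = -2E_0'$, I would supplement this with the matrix Lagrange-type identity
\begin{equation*}
\frac{d}{dx}\bigl[\vv'(x,\la)\tilde\vv^*(x,\la) - \vv(x,\la)(\tilde\vv^*)'(x,\la)\bigr] = Q(x)\vv(x,\la)\tilde\vv^*(x,\la) - \vv(x,\la)\tilde\vv^*(x,\la)\tilde Q(x),
\end{equation*}
obtained by combining $-\vv'' + Q\vv = \la\vv$ with the conjugated equation for $\tilde\vv$. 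Summing weighted contributions of this identity over the spectrum and passing to the $W_2^1$-limit extracts $E_0'(x)$ and yields the first formula in~\eqref{relQhH}. The boundary formulas follow by specializing the master identity at $x = 0$ (using $\vv(0,\la)=I_m$ and $\vv'(0,\la)=h$) and at $x=\pi$ (using~\eqref{bc} together with $(\vv'(\pi,\la_{nk}) + H\vv(\pi,\la_{nk}))\al_{nk} = 0_m$ from the proof of Lemma~\ref{lem:relval}).

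The $W_2^1$-convergence of~\eqref{defE0} is verified term-by-term: by \eqref{asymptvv}, each summand is close at leading order to $\cos(\rho_{nk}x)\be_{nk}\cos(\rho_{nk}x) - \cos(\tilde\rho_{nk}x)\tilde\be_{nk}\cos(\tilde\rho_{nk}x)$, and the required bounds follow from~\eqref{asymptla} and~\eqref{asymptbe} after grouping summands by equivalence classes of asymptotically matching pairs.

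\textbf{Main obstacle.} The hardest step is controlling this convergence in the presence of asymptotically multiple eigenvalues: the individual summands $\vv(x,\la_{nk})\be_{nk}\tilde\vv^*(x,\la_{nk})$ are merely bounded in $x$, and no termwise $W_2^1$-decay is available. One must exploit pairwise cancellation against the $\tilde\be_{nk}$-terms, for which the grouping via Definition~\ref{def:E} and the asymptotic matching $\be_n\approx\tilde\be_n\approx\tfrac{2}{\pi}I_m$ from~\eqref{asymptbe} are essential. Non-commutativity of $Q$, $\be_{nk}$, and $\vv$ further forces careful tracking of the factor ordering throughout, which explains the asymmetric arrangement $\vv\be\tilde\vv^*$ in~\eqref{defE0} rather than a symmetric conjugate form.
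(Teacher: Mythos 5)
First, note that the paper does not prove this proposition at all: it is imported verbatim from \cite{Bond11}, and the only machinery the paper supplies that is relevant to it is the contour-integration identity \eqref{relvv} in Section~\ref{sec:maineq}. The known proof runs through that identity, not through the route you propose: one differentiates \eqref{relvv} term by term in $x$, uses $\tfrac{\partial}{\partial x}\tilde D(x,\mu,\la)=\tilde\vv^*(x,\mu)\tilde\vv(x,\la)$ to make the series $E_0(x)\tilde\vv(x,\la)$ appear explicitly, evaluates at $x=0$ (where $\tilde D$ vanishes and $\vv'(0,\la)=h$, $\tilde\vv'(0,\la)=\tilde h$) to get $h=\tilde h-E_0(0)$, and differentiates once more, substituting $\vv''=(Q-\la)\vv$ and $\tilde\vv''=(\tilde Q-\la)\tilde\vv$ and eliminating $\vv(x,\la)$ via \eqref{relvv} again, to get $\hat Q=-2E_0'$; the $H$ formula comes from the boundary condition at $\pi$. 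Your opening move is sound and consistent with this: the residue identification of $E_0$ as $\frac{1}{2\pi i}\oint \vv(x,\la)\hat M(\la)\tilde\vv^*(x,\bar\la)\,d\la$ via Lemma~\ref{lem:relval} is correct, and your treatment of the $W_2^1$-convergence issue (pairwise cancellation, grouping by \eqref{asymptbe}) correctly locates the technical difficulty.

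The genuine gap is the ``master identity.'' You never derive it, and neither of the two mechanisms you offer would produce it as stated. (i) Matching orders in the large-$\rho$ expansion of $\oint_{\Gamma_N}F\,d\la$ cannot yield the pointwise formula $\hat Q(x)=-2E_0'(x)$: the asymptotics of $\hat M(\la)$ contain $\hat Q$ only through $\hat h$, $\hat H$, and Fourier-type transforms $\int_0^{\pi}\hat Q(t)e^{\pm 2i\rho t}\,dt$, so order-matching gives relations between $E_0$ and these transforms, and inverting them back to $\hat Q(x)$ is essentially the inverse problem itself. (ii) The Lagrange identity you write down is correct, but it cannot ``extract $E_0'$'': its left-hand side is the derivative of the Wronskian-type \emph{difference} $\vv'\tilde\vv^*-\vv(\tilde\vv^*)'$, whereas $E_0'(x)=\sum\bigl(\vv'\be\,\tilde\vv^*+\vv\be\,(\tilde\vv^*)'-\cdots\bigr)$ is built from the \emph{sum} of the two derivative terms with the weight $\be_{nk}$ inserted in the middle; no summation of the Lagrange identity over the spectrum produces this combination. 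The object that does produce it is $\tfrac{\partial^2}{\partial x^2}\tilde D(x,\mu,\la)$ appearing when \eqref{relvv} is differentiated twice --- which is precisely the step your plan bypasses. As written, the boundary formulas are then also unsupported, since you derive them by ``specializing the master identity,'' which does not yet exist.
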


\section{Bounds for the inverse problem} \label{sec:boundinv}

This section contains the proof of Theorem~\ref{thm:boundinv}. First, we show that, for $\{ \la_u, v_u \}_J \in \mathcal S_{\Omega,\eps}$, the corresponding inverse operator $(I + \tilde{\mathcal R}(x))^{-1}$ is uniformly bounded for fixed $\Omega > 0$ and $\eps > 0$. Our method involves constructing a weak limit $\{ \la_u, v_u \}_J$ of a sequence $\{ \la_u^{(p)}, v_u^{(p)} \}_J$ in $\mathcal S_{\Omega, \eps}$ as $p \to \infty$. The limit may fall out of the class of spectral data corresponding to potential matrices $Q(x)$ of $L_2((0,\pi); \mathbb C^{m \times m})$. Note that the compact embedding argument (see, e.g., \cite{SS10}) does not work here, since we have no asymptotics for separate vectors $v_{nk}$ as $n \to \infty$. Therefore, we develop an alternative approach. Using the asymptotics and the estimates given by Definition~\ref{def:B}, we prove that $\{ \la_u, v_u \}_J$ are the spectral data of a problem \eqref{eqsi}--\eqref{bcsi} for $\sigma \in L_2((0,\pi); \mathbb C^{m \times m})$, which actually corresponds to $Q \in W_2^{-1}((0,\pi); \mathbb C^{m \times m})$. Second, it is shown that the uniform boundedness of the inverse operator $(I + \tilde{\mathcal R}(x))^{-1}$ together with the other requirements imply that $Q$, $h$, and $H$ are uniformly bounded. The proof relies on the main equation \eqref{main}, on the construction of the infinite vector $\tilde \psi(x)$ and the operator $\tilde{\mathcal R}(x)$ in Section~\ref{sec:maineq}, and on the reconstruction formulas of Proposition~\ref{prop:relQhH}.

Let $\{ \la_u, v_u \}_J$ be any sequence of $\mathcal S_{\Omega, \eps}$. According to Definition~\ref{def:B}, this sequence satisfies the conditions of Theorem~\ref{thm:char}. Therefore, $\{ \la_u, v_u \}_J$ are the spectral data of some problem $\mathcal L$ with $(Q, h, H) \in \mathcal P$.
Using $\{ \la_u, v_u \}_J$ and the model problem $\tilde{\mathcal L} = \mathcal L(0_m,0_m,0_m)$, one can construct the bounded linear operator $\tilde{\mathcal R}(x) \colon B \to B$ as described in Section~\ref{sec:maineq}. 
By Theorem~\ref{thm:maineq}, there exists the bounded inverse operator $(I + \tilde{\mathcal R}(x))^{-1}$ on $B$ for each fixed $x \in [0,\pi]$.

\begin{lem} \label{lem:boundR}
For $\{ \la_u, v_u \}_J$ in $\mathcal S_{\Omega,\eps}$, the operator $(I + \tilde{\mathcal R}(x))^{-1}$ is uniformly bounded:
$$
\| (I + \tilde{\mathcal R}(x))^{-1} \|_{B \to B} \le C(\Omega, \eps), \quad x \in [0,\pi].
$$
\end{lem}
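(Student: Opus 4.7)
The plan is to argue by contradiction and compactness, leveraging the modified construction of Section~\ref{sec:maineq} that makes $\tilde{\mathcal R}(x)$ continuously dependent on the spectral data. Assume the assertion fails: there exists a sequence $\{\la_{nk}^{(p)}, v_{nk}^{(p)}\}_{p\ge 1} \subset \mathcal S_{\Omega,\eps}$ and points $x_p \in [0,\pi]$ with $\|(I+\tilde{\mathcal R}_p(x_p))^{-1}\|_{B\to B} \to \infty$. By compactness of $[0,\pi]$, pass to a subsequence so that $x_p \to x^*$. Since $\|\{\varkappa_{nk}^{(p)}\}\|_{l_2}\le\Omega$ and $\|v_{nk}^{(p)}\| \le C(\Omega,\eps)$ uniformly (by the corollary to Theorem~\ref{thm:bounddir}), a diagonal argument extracts a further subsequence along which $\rho_{nk}^{(p)} \to \rho_{nk}^\infty$ and $v_{nk}^{(p)} \to v_{nk}^\infty$ for every $(n,k)$.

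The next step is to recognize the pointwise limit $\{\la_{nk}^\infty, v_{nk}^\infty\}$ as bona fide spectral data of some problem of form \eqref{eqsi}--\eqref{bcsi} with $\sigma \in L_2((0,\pi);\mathbb C^{m\times m})$, by verifying the hypotheses of Theorem~\ref{thm:charsi}. The asymptotic $l_2$ bounds pass to the limit by weak-$l_2$ convergence (pointwise convergence combined with a uniform $l_2$ norm bound places the limit inside the closed ball of radius $\Omega$). For completeness of $\{v_{nk}^\infty \cos\rho_{nk}^\infty x\}$ in $L_2((0,\pi);\mathbb C^m)$, one fixes an arbitrary finite linear combination $\sum_{n,k} a_{nk} v_{nk}^{(p)}\cos\rho_{nk}^{(p)}x$, applies the uniform lower bound \eqref{RBbound} with constant $\eps$, passes to the pointwise limit using the Lebesgue theorem and the uniform $L_\infty$-boundedness of the summands, and extends to infinite sums via the uniform tail estimates \eqref{asymptla}, \eqref{asymptV}. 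Theorem~\ref{thm:charsi} then produces a self-adjoint problem \eqref{eqsi}--\eqref{bcsi} with these spectral data, and Remark~\ref{rem:maineq} together with Theorem~\ref{thm:maineq} yields a bounded inverse $(I+\tilde{\mathcal R}_\infty(x^*))^{-1}$ on $B$.

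The contradiction is produced by proving the norm convergence $\tilde{\mathcal R}_p(x_p) \to \tilde{\mathcal R}_\infty(x^*)$ in $B\to B$, which forces $(I+\tilde{\mathcal R}_p(x_p))^{-1} \to (I+\tilde{\mathcal R}_\infty(x^*))^{-1}$ and thereby contradicts the assumed blow-up. Here the modification of the main equation is crucial: the matrix entries $\tilde{\mathcal R}_{ls,nk}(x)$ defined in \eqref{defR} are jointly continuous in $x$ and in the spectral data $\{\rho_{nk}, \be_{nk}\}$. Split the operator norm into a finite block (indices $l \le L$) plus a tail; the tail is controlled uniformly in $p$ by $C(\Omega)\sup_n \sum_{l>L} \xi_l^{(p)}/(|n-l|+1)$, which tends to $0$ as $L\to\infty$ by the uniform estimate \eqref{estpsiRt} combined with $\|\{n\xi_n^{(p)}\}\|_{l_2}\le C(\Omega)$, whereas the finite block converges entrywise to the corresponding block for the limit by continuity.

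The principal obstacle is step two: ensuring that the pointwise weak limit of spectral data genuinely yields an admissible problem in the $\sigma \in L_2$ framework, despite the fact that eigenvalues can collide and norming vectors can degenerate in the limit. The uniform Riesz-basis constant $\eps$ built into $\mathcal S_{\Omega,\eps}$ is the key device that prevents catastrophic degeneracy and transmits completeness to the limit; without it, multiple limiting eigenvalues could produce linearly dependent $\chi_{nk}^\infty$ and destroy the whole scheme. A secondary but still delicate point is the uniform tail control used in the operator-norm convergence, which depends on the asymptotics surviving weak limits with the same constant $\Omega$ and on the continuity properties inherited from the modified construction of Section~\ref{sec:maineq}.
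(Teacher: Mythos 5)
Your overall architecture coincides with the paper's: argue by contradiction, extract a pointwise (diagonal) limit of the spectral data using the uniform bounds from Definition~\ref{def:B}, identify the limit as spectral data of a problem \eqref{eqsi}--\eqref{bcsi} with $\sigma \in L_2$ via Theorem~\ref{thm:charsi}, invoke Theorem~\ref{thm:maineq} with Remark~\ref{rem:maineq} for the limit operator, and then prove operator-norm convergence $\tilde{\mathcal R}^{(p)} \to \tilde{\mathcal R}$ by a finite-block-plus-tail splitting with the tail controlled by $\|\{n\xi_n^{(p)}\}\|_{l_2} \le C(\Omega)$. That part is sound and matches Steps 1 and 3 of the paper's proof.

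The gap is in your treatment of completeness of the limit system $\{v_{nk}^\infty \cos(\rho_{nk}^\infty x)\}$, which is exactly the condition Theorem~\ref{thm:charsi} needs. What your argument actually establishes is that the lower bound \eqref{RBbound} survives the limit: for finite (and then infinite) linear combinations, $\|\sum a_{nk}\chi_{nk}^\infty\|_{L_2} \ge \eps \|a\|_{l_2}$. But a system satisfying such a lower frame-type inequality is only a Riesz \emph{sequence} (a Riesz basis of its closed span); it can still fail to be complete --- think of an orthonormal basis with one element removed, which satisfies the bound with $\eps = 1$. So "applies the uniform lower bound \eqref{RBbound} \dots and extends to infinite sums" does not yield completeness, and without completeness Theorem~\ref{thm:charsi} cannot be applied, which breaks the whole scheme at its most delicate point. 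The argument can be repaired: since the limit asymptotics \eqref{arhod} make $\{\chi_{nk}^\infty\}$ quadratically close to an orthonormal basis of $L_2((0,\pi);\mathbb C^m)$, Bari's theorem (quadratic closeness plus $\omega$-independence, which your lower bound does give, implies Riesz basis and hence completeness) closes the gap --- but you would need to state and use that step explicitly. The paper instead proves completeness directly (Step~2 of its proof): it assumes a unit vector $u$ orthogonal to every $\chi_{nk}^\infty$, expands $u$ in the Riesz basis $\{\chi_{nk}^{(p)}\}$ with coefficients uniformly bounded by $\eps^{-1}$ thanks to \eqref{RBbound}, and after splitting the sums at a level $N$ and letting $p \to \infty$ derives $0 \ge 1 - \eta$ for arbitrary $\eta$, a contradiction. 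Either route works, but as written your proposal is missing the implication from the lower bound to completeness.
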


\begin{proof}
Let us prove the lemma by contradiction.
It can be shown that $\tilde{\mathcal R}(x)$ is continuous by $x \in[0,\pi]$ in the operator norm $\|.\|_{B \to B}$, so 
\begin{equation} \label{uniboundR}
\sup_{x \in [0,\pi]} \| (I + \tilde{\mathcal R}(x))^{-1} \|_{B \to B}< \infty
\end{equation}
for each fixed sequence $\{ \la_u, v_u \}_J$. Suppose that there exists a sequence of spectral data $\{ \la_u^{(p)}, v_u^{(p)} \}_J$, $p \ge 1$, such that
\begin{equation} \label{unboundRp}
\lim_{p \to \infty} \sup_{x \in [0,\pi]} \| (I + \tilde{\mathcal R}^{(p)}(x))^{-1} \|_{B \to B} = \infty.
\end{equation}

The following part of the proof is divided into three steps.

\smallskip

\underline{\textsc{Step 1:}} Let us construct the limit data $\{ \la_u, v_u \}_J$ of $\{ \la_u^{(p)}, v_u^{(p)} \}_J$ as $p \to \infty$ and obtain their asymptotics. 

Due to Definition~\ref{def:B} of $\mathcal S_{\Omega, \eps}$, the sequences $\{ \hat \rho_u^{(p)} \}_J$ ($\hat \rho_u^{(p)} = \rho_u^{(p)} - \tilde \rho_u$) and $\{ v_u^{(p)} \}_J$ are uniformly bounded with respect to $p \ge 1$:
$$
|\hat \rho_u^{(p)}| \le \Omega, \quad \| v_u^{(p)} \| \le C(\Omega), 
$$
for all $u \in J$. Therefore, we can extract weakly (i.e. for each fixed $u$) converging subsequences:
\begin{equation} \label{limp2}
\lim_{p \to \infty} \hat \rho_u^{(p)} =: \hat \rho_u, \quad \lim_{p \to \infty} v_u^{(p)} =: v_u.
\end{equation}

Put $\rho_u := \tilde \rho_u + \hat \rho_u$, $\la_u := \rho_u^2$. Let us show that $\{ \la_u, v_u \}_J$ are the spectral data for a problem \eqref{eqsi}--\eqref{bcsi} with some $\sigma \in L_2((0,\pi); \mathbb C^{m \times m})$ and $\check{H} \in \mathbb C^{m \times m}$. For this purpose, we have to prove that the conditions of Theorem~\ref{thm:charsi} are fulfilled.

Due to \eqref{asymptla}, \eqref{asymptV}, and \eqref{boundabove}, there hold
\begin{equation} \label{estrho}
|\rho_{nk}^{(p)} - (n-1)| \le \frac{\Omega}{n}, \quad \left\| (V_n^{(p)})^* V_n^{(p)} - \frac{2}{\pi} I_m\right\| \le \frac{\Omega}{n}.
\end{equation}
Hence, the same estimates are valid for $\{ \rho_{nk}, v_{nk} \}$, so
\begin{gather} \label{arhod}
\rho_{nk} = n - 1 + O\bigl(n^{-1}\bigr), \quad k = \overline{1,m}, \qquad 
V_n^* V_n = \frac{2}{\pi} I_m + O\bigl( n^{-1}\bigr), \quad n \ge 1.
\end{gather}

\smallskip

\underline{\textsc{Step 2.}} Let us prove that the sequence $\{ \chi_u \}_J$, $\chi_u := v_u \cos (\rho_u x)$, is complete in $L_2((0,\pi); \mathbb C^m)$.

Suppose that, on the contrary, there exists a non-zero vector function $w \in L_2((0,\pi); \mathbb C^m)$ that is orthogonal to all $\chi_u$, that is, 
\begin{equation} \label{ortu}
(w, \chi_u) = 0, \quad u \in J.
\end{equation}

For each $p \ge 1$, the sequence $\{\chi_u^{(p)} \}_J$ is a Riesz basis and \eqref{RBbound} holds. Hence, the vector function $w$ can be expanded as
$$
w = \sum_{u \in J} w_u^{(p)} \chi_u^{(p)}
$$
and
\begin{equation} \label{boundu}
\| \{ w_u^{(p)} \} \|_{l_2} \le \eps^{-1} \| w \|_{L_2}
\end{equation}
for each $p \ge 1$. Without loss of generality, assume that $\| w \|_{L_2} = 1$.

Using \eqref{ortu}, we obtain
\begin{align}
\nonumber 0 & = \left( w, \sum_{n,k} w_{nk}^{(p)} \chi_{nk} \right) = 
\| w \|^2_{L_2} + \left( w, \sum_{n,k \colon n \le N} w_{nk}^{(p)} (\chi_{nk} - \chi_{nk}^{(p)}) \right) + \left( w, \sum_{n,k \colon n > N} w_{nk}^{(p)} \mathring{\chi}_{nk} \right) \\ \label{sumser7} & + \left(w, \sum_{n,k \colon n > N} w_{nk}^{(p)} (\chi_{nk} - \mathring{\chi}_{nk}) \right) - \left( w, \sum_{n,k \colon n > N} w_{nk}^{(p)} \mathring{\chi}_{nk}^{(p)} \right) - \left( w, \sum_{n,k \colon n >N} w_{nk}^{(p)} (\chi_{nk}^{(p)} - \mathring{\chi}_{nk}^{(p)})\right),
\end{align}
where 
$$
\mathring{\chi}_{nk} = v_{nk} \cos((n-1)x), \quad \mathring{\chi}_{nk}^{(p)} = v_{nk}^{(p)} \cos((n-1)x).
$$

Using \eqref{arhod} and \eqref{boundu}, we get the estimate
\begin{equation} \label{est1}
\left\| \sum_{n,k \colon n > N} w_{nk}^{(p)} (\chi_{nk} - \mathring{\chi}_{nk})\right\|_{L_2} \le \frac{C}{N},
\end{equation}
which is uniform by $p \ge 1$.

For each $n \ge 1$, denote by $w_n^{(p)}$ the column vector of the elements $\{ w_{nk}^{(p)} \}_{k = 1}^m$ and put $\mathring{w}_n^{(p)} := V_n w_n^{(p)}$. In view of \eqref{arhod}, there holds 
\begin{equation} \label{normun}
\| \mathring{w}_n^{(p)} \| = \| w_n^{(p)} \| \Bigl(\sqrt{\tfrac{2}{\pi}} + O(n^{-1})\Bigr), \quad n > N,
\end{equation}
where the $O$-estimate is uniform by $p \ge 1$. Then
$$
\left( w, \sum_{n,k \colon n > N} w_{nk}^{(p)} \mathring{\chi}_{nk} \right) =  \left( w, \sum_{n > N} \mathring{w}_n^{(p)} \cos((n-1)x) \right) = \sum_{n > N} (w_n^0, \mathring{w}_n^{(p)}),
$$
where $\{ w_n^0 \}$ are vector Fourier coefficients of the vector-function $w$:
$$
w_n^0 := \int_0^{\pi} w(x) \cos((n-1)x) \, dx.
$$
Using the Cauchy-Bunyakosvky inequality, \eqref{boundu} and \eqref{normun}, we get
\begin{equation} \label{est2}
\sum_{n > N} (w_n^0, \mathring{w}_n^{(p)}) \le \| \{ w_{nk}^0 \}_{n > N} \|_{l_2} \| \{ \mathring{w}_{nk}^{(p)} \} \|_{l_2} \le C \| \{ w_{nk}^0 \}_{n > N} \|_{l_2}, \quad p \ge 1.
\end{equation}
Note that $\| \{ w_{nk}^0 \}_{n > N} \|_{l_2} \to 0$ as $N \to \infty$. Analogously to \eqref{est1} and \eqref{est2}, one can estimate the last two terms in \eqref{sumser7}.

Thus, for every $\eta > 0$, one can choose an index $N$ such that
\begin{equation} \label{est3}
\def\arraystretch{3}
\left.
\begin{array}{c}
\Biggl| \left( w, \sum\limits_{n,k \colon n > N} w_{nk}^{(p)} \mathring{\chi}_{nk} \right) \Biggr| \le \dfrac{\eta}{5}, \quad
\Biggl|\left(w, \sum\limits_{n,k \colon n > N} w_{nk}^{(p)} (\chi_{nk} - \mathring{\chi}_{nk}) \right)  \Biggr| \le \dfrac{\eta}{5}, \\
\Biggl| \left( w, \sum\limits_{n,k \colon n > N} w_{nk}^{(p)} \mathring{\chi}_{nk}^{(p)} \right) \Biggr| \le \dfrac{\eta}{5}, \quad
\Biggl| \left( w, \sum\limits_{n,k \colon n >N} w_{nk}^{(p)} (\chi_{nk}^{(p)} - \mathring{\chi}_{nk}^{(p)})\right) \Biggr| \le \dfrac{\eta}{5},
\end{array} \qquad \right\}
\end{equation}
for each $p \ge 1$. It follows from \eqref{limp2} that 
$\lim\limits_{p \to \infty} \chi_u^{(p)} = \chi_u$
for each fixed $u \in J$.
Hence, one can choose $p$ (for the fixed $N$) such that
\begin{equation} \label{est4}
\left\| \sum_{n,k \colon n \le N} w_{nk}^{(p)} (\chi_{nk} - \chi_{nk}^{(p)})\right\|_{L_2} \le \frac{\eta}{5}.
\end{equation}

Combining \eqref{sumser7}, \eqref{est3}, and \eqref{est4}, we arrive at the inequality
$$
0 = \left( w, \sum_{u \in J} w_u^{(p)} \chi_u \right) \ge 1 -\eta,
$$
where $\eta > 0$ can be arbitrarily small. This contradiction shows the completeness of $\{ \chi_u \}_J$.

\smallskip

\underline{\textsc{Step 3:}} We will obtain the operator $\tilde{\mathcal R}(x)$, study its properties, and show that $\tilde{\mathcal R}^{(p)}(x)$ tends to $\tilde{\mathcal R}(x)$ in the operator norm as $p\to \infty$.

Applying Theorem~\ref{thm:charsi}, we conclude that the limit values $\{ \la_u, v_u \}_J$ 
are the spectral data of some self-adjoint problem of form \eqref{eqsi}--\eqref{bcsi}.
Consider the operator $\tilde{\mathcal R}(x)$, which is constructed by using $\{ \la_u, \be_u \}_J$ ($\be_u := v_u v_u^*$). According to Theorem~\ref{thm:maineq} together with Remark~\ref{rem:maineq}, there exists a bounded inverse operator $(I + \tilde{\mathcal R}(x))^{-1}$ in $B$ for each fixed $x \in [0,\pi]$. According to the above arguments, it satisfies \eqref{uniboundR}.

Let us estimate the norm of the difference:
\begin{align} \nonumber
\| \tilde{\mathcal R}^{(p)}(x) - \tilde{\mathcal R}(x) \|_{B \to B} = & \sup_{n,k} \sum_{l,s} \| \tilde{\mathcal R}_{ls,nk}^{(p)}(x) - \tilde{\mathcal R}_{ls,nk}(x) \| \le \sup_{n,k} \Biggl( \sum_{l,s \colon l\le N} \| \tilde{\mathcal R}_{ls,nk}^{(p)}(x) - \tilde{\mathcal R}_{ls,nk}(x) \| \\ \label{normR} & + \sum_{l,s \colon l > N} \| \tilde{\mathcal R}_{ls,nk}^{(p)}(x) \| + \sum_{l,s \colon l > N} \| \tilde{\mathcal R}_{ls,nk}(x) \| \Biggr). 
\end{align}

It follows from \eqref{estpsiRt} that
\begin{equation} \label{estR}
\| \tilde{\mathcal R}^{(p)}_{ls,nk}(x) \| \le \frac{C(\Omega)}{l (|n-l| + 1)}.
\end{equation}
Consequently,
\begin{equation} \label{estsumR}
\sum_{l,s \colon l > N} \| \tilde{\mathcal R}_{ls,nk}^{(p)}(x) \| \le C \sqrt{\sum_{l = N + 1}^{\infty} \frac{1}{l^2}} \le \frac{C}{\sqrt{N}}
\end{equation}
for all $n \ge 1$, $k = \overline{1,m+1}$, $p \ge 1$.
Using the asymptotics \eqref{arhod}, we obtain the estimates analogous to \eqref{estR} and \eqref{estsumR} for $\tilde{\mathcal R}_{ls,nk}(x)$.

Let us proceed to the first sum in the right-hand side of \eqref{normR}. Let $s$ and $k$ be fixed integers in $\{ 1,  2, \dots, m \}$. The other cases can be studied similarly. Using \eqref{defR}, we get
\begin{align*}
\tilde{\mathcal R}_{ls,nk}^{(p)}(x) - \tilde{\mathcal R}_{ls,nk}(x) & = \hat \rho_{ls}^{(p)} \be_{ls}^{(p)} \tilde W_n(x, \rho_{ls}^{(p)}, \rho_{nk}^{(p)}) - \hat \rho_{ls} \be_{ls} \tilde W_n(x, \rho_{ls}, \rho_{nk}) \\ & = \mathscr S_1 + \mathscr S_2 + \mathscr S_3 + \mathscr S_4,
\end{align*}
where
\begin{align*}
\mathscr S_1 & := (\rho_{ls}^{(p)} - \rho_{ls}) \be_{ls}^{(p)} \tilde W_n(x, \rho_{ls}^{(p)}, \rho_{nk}^{(p)}), \\
\mathscr S_2 & := \rho_{ls} (\be_{ls}^{(p)} - \be_{ls}) \tilde W_n(x, \rho_{ls}^{(p)}, \rho_{nk}^{(p)}), \\
\mathscr S_3 & := \rho_{ls} \be_{ls} \bigl(\tilde W_n(x, \rho_{ls}^{(p)}, \rho_{nk}^{(p)}) - \tilde W_n(x, \rho_{ls}, \rho_{nk}^{(p)}) \bigr), \\
\mathscr S_4 & := \rho_{ls} \be_{ls} \bigl( \tilde W_n(x, \rho_{ls}, \rho_{nk}^{(p)}) - \tilde W_n(x, \rho_{ls}, \rho_{nk}) \bigr)
\end{align*}
Taking \eqref{defDt} and \eqref{defWnt} into account, we obtain the estimates
$$
\| \mathscr S_1 \|, \| \mathscr S_3 \| \le C |\rho_{ls}^{(p)} - \rho_{ls}|, \quad \| \mathscr S_2 \| \le C \|\be_{ls}^{(p)} - \be_{ls}\|, \quad \| \mathscr S_4 \| \le C |\rho_{nk}^{(p)} - \rho_{nk}|,
$$
which are uniform by $n,l,p \ge 1$ and $x \in [0,\pi]$.
Fix $\eta > 0$.
In view of \eqref{estrho} and \eqref{arhod}, one can choose $N$ such that, for each $n > N$, there holds $|\rho_{nk}^{(p)} - \rho_{nk}| \le \frac{\eta}{4}$. It follows from \eqref{limp2} that $\rho_{ls}^{(p)} \to \rho_{ls}$ and $\be_{ls}^{(p)} \to \be_{ls}$ as $p \to \infty$ for fixed $l,s$. Therefore, there exists $p \ge 1$ such that 
$$
|\rho_{ls}^{(p)} - \rho_{ls}| \le \frac{\eta}{4}, \quad \|\be_{ls}^{(p)} - \be_{ls}\| \le \frac{\eta}{4}, \quad l \le N, \quad s = \overline{1,m}.
$$
Then, we get
$$
\| \tilde{\mathcal R}_{ls,nk}^{(p)}(x) - \tilde{\mathcal R}_{ls,nk}(x) \| \le C \eta, \quad l \le N, \quad n \ge 1,
$$
where $\eta > 0$ can be chosen arbitrarily small. Summarizing the above arguments, we conclude that 
$$
\lim_{p \to \infty} \| \tilde{\mathcal R}^{(p)}(x) - \tilde{\mathcal R}(x) \|_{B \to B} = 0
$$
uniformly by $x \in [0,\pi]$.
Consequently,
$$
\lim_{p \to \infty} \| (I + \tilde{\mathcal R}^{(p)}(x))^{-1} \|_{B \to B} = \| (I + \tilde{\mathcal R}(x))^{-1} \|_{B \to B} < \infty, \quad x \in [0,\pi],
$$
which contradicts to \eqref{unboundRp} and so concludes the proof.
\end{proof}

\begin{lem} \label{lem:unibound}
Let spectral data $\{\la_u, v_u \}_J$ satisfy the conditions of Theorem~\ref{thm:char} and the estimates \eqref{boundabove} for the remainders of the asymptotics \eqref{asymptla} and \eqref{asymptV} with a constant $\Omega > 0$.
Furthermore, suppose that 
\begin{equation} \label{boundRK}
\| (I + \tilde{\mathcal R}(x))^{-1} \|_{B \to B} \le K, \quad x \in [0,\pi],
\end{equation}
for some $K > 0$. Then, the parameters $(Q, h, H)$ of the corresponding eigenvalue problem \eqref{eqv}--\eqref{bc} lie in $\mathcal P_R$, where $R > 0$ depends only on $\Omega$ and $K$.
\end{lem}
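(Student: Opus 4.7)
The plan is to pass from the uniform invertibility bound \eqref{boundRK} for the operator $(I+\tilde{\mathcal R}(x))$ to a uniform $W_2^1$-bound on the matrix function $E_0(x)$ from \eqref{defE0}, and then read off bounds for $Q$, $h$, $H$ via the reconstruction formulas \eqref{relQhH} with $\tilde Q = \tilde h = \tilde H = 0$.

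\textbf{Step 1: Bounding $\psi(x)$ in $B$.} From Theorem~\ref{thm:maineq} we have $\psi(x) = \tilde\psi(x)(I+\tilde{\mathcal R}(x))^{-1}$. The estimates \eqref{estpsiRt} together with the asymptotic bounds \eqref{asymptla}, \eqref{asymptV} and \eqref{boundabove} (which by Lemma~\ref{lem:asymptbe} imply $\|\{\mathscr K_n\}\|_{l_2}\le C(\Omega)$) give $\|\tilde\psi(x)\|_B\le C(\Omega)$ uniformly in $x\in[0,\pi]$. Combined with \eqref{boundRK} this yields $\|\psi_{nk}(x)\|\le K\cdot C(\Omega) =: C_1(\Omega,K)$ uniformly in $n,k,x$. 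In particular, using \eqref{defpsi},
$$
\varphi(x,\la_{nk}) = \psi_{n,m+1}(x) + \hat\rho_{nk}\,\psi_{nk}(x),\qquad k=\overline{1,m},
$$
so $\|\varphi(x,\la_{nk})\|\le C_2(\Omega,K)$ for all $n,k$ and $x\in[0,\pi]$.

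\textbf{Step 2: Reassembling $E_0(x)$.} Substituting the above identity into \eqref{defE0} and using $\tilde\varphi(x,\la)=\cos(\rho x)I_m$, grouping terms by $n$, we rewrite
\begin{align*}
E_0(x) = \sum_{n=1}^{\infty}\Bigl\{ & \psi_{n,m+1}(x)\bigl[(\be_n-\tilde\be_n)\cos((n-1)x) + \sum_{k=1}^m\be_{nk}(\cos(\rho_{nk}x)-\cos((n-1)x))\bigr] \\
& + \sum_{k=1}^m \hat\rho_{nk}\,\psi_{nk}(x)\,\be_{nk}\cos(\rho_{nk}x)\Bigr\}.
\end{align*}
Each summand in $n$ now carries either the factor $\be_n-\tilde\be_n$ (controlled by the remainder in \eqref{asymptV}), a factor $\cos(\rho_{nk}x)-\cos((n-1)x) = O(\hat\rho_{nk}\cdot x)$, or an explicit $\hat\rho_{nk}$; in every case the size is $O(n^{-1})$ with the corresponding $l_2$-remainders controlled by $\Omega$.

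\textbf{Step 3: $W_2^1$-estimate.} I would estimate this series term by term in $W_2^1([0,\pi];\mathbb C^{m\times m})$. Writing each bracketed factor via the identities
$$
\cos(\rho_{nk}x)-\cos((n-1)x) = -2\sin\tfrac{(\rho_{nk}+n-1)x}{2}\sin\tfrac{\hat\rho_{nk}x}{2},
$$
one sees that the $n$-th summand, together with its derivative in $x$, is bounded in $L_2(0,\pi)$ by
$$
C_1(\Omega,K)\,\bigl( n(|\hat\rho_{n1}|+\dots+|\hat\rho_{nm}|) + n\|\be_n-\tilde\be_n\|\bigr)\cdot \frac{1}{n},
$$
where the factor $n$ comes from differentiating $\cos((n-1)x)$ and must be absorbed into the quantities of the Riesz-basis systems $\{\cos((n-1)x)\}$ and $\{n^{-1}\sin((n-1)x)\}$ in $L_2(0,\pi)$. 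Applying a Bessel-type inequality for those systems together with the bounds $\|\{n\hat\rho_{nk}\}\|_{l_2}\le\Omega$ and $\|\{n\|\be_n-\tilde\be_n\|\}\|_{l_2}\le C(\Omega)$ from \eqref{asymptla}--\eqref{asymptV} and \eqref{boundabove}, the series converges in $W_2^1$ with
$$
\|E_0\|_{W_2^1([0,\pi];\mathbb C^{m\times m})} \le C(\Omega,K).
$$

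\textbf{Step 4: Conclusion.} The reconstruction formulas \eqref{relQhH} applied with $\tilde Q=0_m$, $\tilde h=0_m$, $\tilde H=0_m$ give $Q=-2E_0'$, $h=-E_0(0)$, $H=E_0(\pi)$. The continuous embedding $W_2^1[0,\pi]\subset C[0,\pi]$ then yields $\|Q\|_{L_2}+\|h\|+\|H\|\le R(\Omega,K)$, which is the desired bound.

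\textbf{Main obstacle.} The routine part is Step~1, since it is purely a consequence of the main equation and the uniform estimates already established. The technical core is Step~3: the summands in $E_0(x)$ and $E_0'(x)$ are not individually small in $L_2$, and one must carefully exploit the Bessel/Riesz-basis property of trigonometric systems to convert the $l_2$-smallness of $\{n\hat\rho_{nk}\}$ and $\{n(\be_n-\tilde\be_n)\}$ into an actual $W_2^1$-bound on the series. This is where the uniform character of the estimate (dependence only on $\Omega$ and $K$) crucially enters.
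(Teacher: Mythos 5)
Your overall architecture coincides with the paper's: pass through the main equation to bound $\psi(x)$, reassemble $E_0(x)$ from \eqref{defE0}, prove a $W_2^1$-bound, and conclude via \eqref{relQhH}. Your Step~2 decomposition of $E_0$ is correct. However, there is a genuine gap between your Step~1 and Step~3. The operator-norm bound \eqref{boundRK} gives you only the pointwise estimate $\|\psi_{nk}(x)\|\le C(\Omega,K)$; it gives neither the derivative bound $\|\psi_{nk}'(x)\|\le C n$ (differentiating $\psi(x)=\tilde\psi(x)(I+\tilde{\mathcal R}(x))^{-1}$ requires controlling $\frac{d}{dx}$ of the inverse operator, hence \emph{element-wise} estimates of its kernel, not just its norm), nor --- more importantly --- the closeness estimates $\|\psi_{nk}(x)-\tilde\psi_{nk}(x)\|\le C\tau_n$ and $\|\psi_{nk}'(x)-\tilde\psi_{nk}'(x)\|\le C$. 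Without the latter, your Bessel argument in Step~3 cannot be run: the ``coefficients'' multiplying $\sin((n-1)x)$ after differentiation are the $x$-dependent, non-explicit matrix functions $\psi_{n,m+1}'(x)$, $\psi_{nk}'(x)$, and a Bessel-type inequality applies only after these are replaced by their explicit trigonometric main terms ($-(n-1)\sin((n-1)x)I_m$, etc.) up to a \emph{uniformly bounded} correction. With only $\|\psi_{n,m+1}'\|\le Cn$ the typical term of $E_0'$, e.g.\ $\psi_{n,m+1}'(x)(\be_n-\tilde\be_n)\cos((n-1)x)$, is merely $O(\|\mathscr K_n\|)$ pointwise, and $\{\mathscr K_n\}\in l_2$ does not give an absolutely convergent series.

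The paper fills exactly this gap: it introduces $\mathcal R(x):=I-(I+\tilde{\mathcal R}(x))^{-1}$, combines the norm bound $\|\mathcal R(x)\|_{B\to B}\le K+1$ with the element-wise decay \eqref{estRt} of $\tilde{\mathcal R}_{ls,nk}$ (via the resolvent identity) to get the element-wise estimates \eqref{estR1}--\eqref{estR2} for $\mathcal R_{ls,nk}$ and $\mathcal R_{ls,nk}'$, and only then deduces \eqref{estpsi1}, i.e.\ the four estimates on $\psi_{nk}$, $\psi_{nk}'$ and their differences with $\tilde\psi_{nk}$, $\tilde\psi_{nk}'$. These are precisely the inputs your Step~3 implicitly assumes. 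To repair your plan, insert this intermediate step: derive element-wise bounds on the kernel of the inverse operator and its $x$-derivative from \eqref{boundRK} together with \eqref{estpsiRt}, and record the resulting estimates $\|\psi_{nk}-\tilde\psi_{nk}\|\le C\tau_n$, $\|\psi_{nk}'-\tilde\psi_{nk}'\|\le C$ before attempting the trigonometric-series summation.
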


\begin{proof}
The proof repeats the arguments of \cite[Section 5]{Bond24} taking into account the specific construction of the operator $\tilde{\mathcal R}(x)$ for the matrix Sturm-Liouville operator in Section~\ref{sec:maineq}, so we outline it briefly. 

In view of Lemma~\ref{lem:asymptbe}, the estimates \eqref{boundabove} imply \eqref{estOm}.
So, we deduce the estimates
\begin{gather} \label{estpsit}
\| \tilde \psi_{nk}(x) \| \le C(\Omega), \quad 
\| \tilde \psi_{nk}'(x) \| \le C(\Omega) n, \\ \label{estRt}
\| \tilde {\mathcal R}_{ls,nk}(x) \| \le \frac{C(\Omega) \xi_l}{|n-l|+1}, \quad
\| \tilde{\mathcal R}'_{ls,nk}(x) \| \le C(\Omega) \xi_l,
\end{gather}
for $l,n \ge 1$, $k,s = \overline{1,m+1}$, $x \in [0,\pi]$, and
$\| \{ n\xi_n \} \|_{l_2} \le C(\Omega)$. 

Define the operator
$$
\mathcal R(x) := I - (I + \tilde{\mathcal R}(x))^{-1}.
$$
It follows from \eqref{boundRK} that $\| \mathcal R(x) \|_{B \to B} \le K + 1$ for each fixed $x \in [0,\pi]$. Using this estimate together with \eqref{opR} and \eqref{estRt}, we obtain
\begin{gather} \label{estR1}
\| \mathcal R_{ls,nk}(x) \| \le C \xi_l \left( \frac{1}{|n-l|+1} + \tau_n\right), \quad \| \mathcal R_{ls,nk}(x) \| \le C \xi_l \left( \frac{1}{|n-l|+1} + \tau_l \right), \\ \label{estR2}
\| \mathcal R'_{ls,nk}(x) \| \le C \xi_l,
\end{gather}
where $C = C(\Omega, K)$ and
$$
\tau_n := \sqrt{\sum_{s = 1}^{\infty} \frac{1}{s^2 (|n-s| + 1)^2}}, \quad n \ge 1.
$$

Next, construct the vector
$$
\psi(x) := \tilde \psi(x)(I - \mathcal R(x)).
$$
In the element-wise form, this yields
\begin{equation} \label{psink}
\psi_{nk}(x) = \tilde \psi_{nk}(x) - \sum_{l,s} \tilde \psi_{ls}(x) \mathcal R_{ls,nk}(x), \quad n \ge 1, \, k = \overline{1,m+1}, \, x \in [0,\pi].
\end{equation}
Using \eqref{psink} together with \eqref{estpsit}, \eqref{estR1}, and \eqref{estR2}, we deduce
\begin{equation} \label{estpsi1}
\def\arraystretch{1.7}
\left.
\begin{array}{l}
\| \psi_{nk}(x) \| \le C, \quad \| \psi_{nk}(x) - \tilde \psi_{nk}(x) \| \le C \tau_n, \\ 
\| \psi_{nk}'(x) \| \le C n, \quad \| \psi_{nk}'(x) - \tilde \psi_{nk}'(x) \| \le C,
\end{array} \quad \right\}
\end{equation}
where $C = C(\Omega, K)$, $n \ge 1$, $k = \overline{1,m+1}$, and $x \in [0,\pi]$.

Using the matrix functions $\{ \psi_{nk}(x) \}$ and the relations \eqref{defpsi}, we obtain the matrix functions $\vv_{nk}(x) := \vv(x, \la_{nk})$ for $k = \overline{1,m}$ and $\vv_{n,m+1}(x) := \vv(x, \tilde \la_n)$ by formulas \eqref{recvv}.
It follows from \eqref{estpsi1} that
\begin{equation} \label{estvv}
\def\arraystretch{1.7}
\left.
\begin{array}{l}
\| \vv_{nk}(x) \| \le C, \quad \| \vv_{nk}(x) - \tilde \vv_{nk}(x) \| \le C \tau_n, \\
\|\vv_{nk}'(x) \| \le C n, \quad \|\vv_{nk}'(x) - \tilde \vv_{nk}'(x) \| \le C,
\end{array} \qquad \right\}
\end{equation}
where $C = C(\Omega,K)$, $n \ge 1$, $k = \overline{1,m+1}$, $x \in [0,\pi]$. 

By virtue of Proposition~\ref{prop:relQhH}, the matrices $Q(x)$, $h$, and $H$ satisfy the relation \eqref{relQhH}, where the matrix function $E_0(x)$ in the case  $\tilde{\mathcal L} = \mathcal L(0_m,0_m,0_m)$ has the form
\begin{equation*} 
E_0(x) = \sum_{n = 1}^{\infty} \left( \sum_{k = 1}^m \vv_{nk}(x) \be_{nk} \tilde \vv_{nk}(x) - \vv_{n,m+1}(x) \tilde \be_n \tilde \vv_{n,m+1}(x) \right).
\end{equation*}

Using \eqref{estvv} together with the relation \eqref{defvvt} for $\tilde \vv(x, \la)$, the asymptotics \eqref{asymptla}, \eqref{asymptbe} and the estimates \eqref{estOm}, one can show that
$$
\| E_0 \|_{W_2^1} \le C(\Omega, K).
$$
Hence, the relations \eqref{relQhH} imply
$$
\| Q \|_{L_2} \le C(\Omega, K), \quad \| h \| \le C(\Omega, K), \quad
\| H \| \le C(\Omega, K),
$$
which concludes the proof.
\end{proof}

Obviously, Lemmas~\ref{lem:boundR} and \ref{lem:unibound} together imply Theorem~\ref{thm:boundinv}.

\section{Uniform stability for the inverse problem}\label{sec:unistab}

In this section, Theorem \ref{thm:unistab} is proved.
We estimate the difference between two coefficient triples $(Q, h, H)$ and $(\tilde Q, \tilde h, \tilde H)$ in $\mathcal P_R$ using the relations \eqref{relQhH} and a partition $\mathscr P$ from Definition~\ref{def:Js}. The obtained estimate together with the uniform boundedness of Inverse Problem~\ref{ip:1} yield the uniform stability. It is remarkable that the proof of the uniform stability on $\mathcal P_R$ relies only on the reconstruction formulas (see Proposition~\ref{prop:relQhH}) and does not use the main equation \eqref{main}. Furthermore, we consider some examples, which illustrate applications of Theorem~\ref{thm:unistab}. In particular, we show the convergence of the finite data approximation to the inverse problem solution.

\begin{lem} \label{lem:estE0}
If $(Q, h, H)$ and $(\tilde Q, \tilde h, \tilde H)$ belong to $\mathcal P_R$, then 
$\| E_0(x) \|_{W_2^1} \le C(R) Z$,
where $Z$ is defined in \eqref{defZ} for a partition $\mathscr P$ satisfying Definition~\ref{def:Js}.
\end{lem}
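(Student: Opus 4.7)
The strategy is to split the series \eqref{defE0} according to the partition $\{J_s\}_{s \ge 1}$ of $J$, apply a telescoping identity within each group $J_s$ about the ``base'' indices $\la_1(J_s)$ and $\tilde\la_1(J_s)$, and then upgrade the resulting pointwise bounds into an $L_2$-type estimate using the uniform Riesz basis property from Theorem~\ref{thm:bounddir}. Writing $E_0(x) = \sum_{s \ge 1} F_s(x)$ with $F_s$ the partial sum over $J_s$, and exploiting $\sum_{j=1}^{r_s}\be_j(J_s)=\be(J_s)$ from \eqref{sumbe}, one may decompose $F_s = F_s^{(1)} + F_s^{(2)} + F_s^{(3)}$, where the ``base-point difference''
\[
F_s^{(2)}(x) := \vv(x,\la_1(J_s))\be(J_s)\tilde\vv^*(x,\la_1(J_s)) - \vv(x,\tilde\la_1(J_s))\tilde\be(J_s)\tilde\vv^*(x,\tilde\la_1(J_s))
\]
is governed by $|\rho_1(J_s)-\tilde\rho_1(J_s)|$ and $\|\be(J_s)-\tilde\be(J_s)\|$, while the intra-group spread
\[
F_s^{(1)}(x) := \sum_{j=1}^{r_s}\Bigl[\vv(x,\la_j(J_s))\be_j(J_s)\tilde\vv^*(x,\la_j(J_s)) - \vv(x,\la_1(J_s))\be_j(J_s)\tilde\vv^*(x,\la_1(J_s))\Bigr]
\]
is governed by $\sum_{j \ge 2}|\rho_j(J_s)-\rho_1(J_s)|$, and the analogous tilde-side term $F_s^{(3)}$ is governed by $\sum_{j \ge 2}|\tilde\rho_j(J_s)-\tilde\rho_1(J_s)|$. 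Altogether the three contributions are at most $\zeta_s$ in amplitude.

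Using the standard Lipschitz-type estimates
\[
\|\vv(x,\rho_1^2)-\vv(x,\rho_2^2)\| \le C(R)|\rho_1-\rho_2|, \qquad \|\tfrac{d}{dx}(\vv(x,\rho_1^2)-\vv(x,\rho_2^2))\| \le C(R)(1+\max\{|\rho_1|,|\rho_2|\})|\rho_1-\rho_2|,
\]
which follow from the integral equation for $\vv$ together with the uniform bounds $\|Q\|_{L_2},\|h\|,\|H\| \le R$ (and the analogues for $\tilde\vv$), combined with $\|\be_j(J_s)\|,\|\tilde\be_j(J_s)\| \le C(R)$ from \eqref{boundv}, I would obtain the pointwise estimates $\|F_s(x)\| \le C(R)\zeta_s$ and $\|F_s'(x)\| \le C(R)\, s\, \zeta_s$. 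The factor $s$ reflects the asymptotics \eqref{asymptla} and the fact that the $x$-derivative of $\vv(x,\rho^2)$ has leading amplitude of order $\rho \sim s$ uniformly over the indices in $J_s$.

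The main obstacle is to pass from these pointwise bounds, which naively give only the $l_1$-type estimates $\|E_0\|_{L_2} \le C(R)\sum_s \zeta_s$ and $\|E_0'\|_{L_2}\le C(R)\sum_s s\zeta_s$, to the required $l_2$-type bound, since $Z = \|\{s\zeta_s\}\|_{l_2}$ is only an $l_2$ norm. To recover the correct summation rate, I would use the oscillatory asymptotics $\vv(x,\rho^2) = \cos(\rho x)I_m + O(\rho^{-1})$, and its $x$-derivative of leading amplitude $\rho$, together with the analogues for $\tilde\vv$, to rewrite the principal parts of $F_s$ and $F_s'$ as linear combinations of products of cosines/sines at frequencies $\rho_j(J_s),\tilde\rho_j(J_s)\sim s$ with matrix amplitudes of size $O(\zeta_s)$ and $O(s\zeta_s)$, respectively. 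Product-to-sum identities convert $E_0$ and $E_0'$ into nearly-Fourier expansions in $L_2((0,\pi);\mathbb C^{m\times m})$, and the uniform Riesz basis estimate \eqref{RBbound} of Theorem~\ref{thm:bounddir}, applied to both $\mathcal L$ and $\tilde{\mathcal L}$, yields the Bessel-type inequality
\[
\|E_0\|_{L_2}^2 + \|E_0'\|_{L_2}^2 \le C(R)\sum_{s \ge 1}(s\zeta_s)^2 = C(R) Z^2.
\]
The $O(\rho^{-1})$ remainders in the oscillatory representation contribute sums with weight $s^{-1}$ that converge absolutely against $\zeta_s$ and are absorbed. The delicate step is tracking the noncommutative matrix structure of $\vv(x,\la)\be_j\tilde\vv^*(x,\la)$ through the Fourier reduction; however, the uniformity of all Riesz constants on $\mathcal P_R$ from Theorem~\ref{thm:bounddir} ensures that the final constant depends only on $R$, giving $\|E_0\|_{W_2^1} \le C(R) Z$.
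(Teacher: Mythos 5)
Your overall route coincides with the paper's: the same decomposition of $E_0$ into group sums over the $J_s$, the same telescoping about the base indices $\rho_1(J_s)$, $\tilde\rho_1(J_s)$, the Lipschitz-type estimates for $\vv(x,\la)$ in $\rho$ (the paper cites Schwarz's lemma), the pointwise bounds of order $\zeta_s$ for the group sums and $s\zeta_s$ for their derivatives, and the idea of isolating an oscillatory leading part of the derivative so that it sums in $L_2$ while the remainders sum absolutely. One remark: for $\|E_0\|_{L_2}$ itself the ``naive'' bound $C\sum_s\zeta_s$ is already sufficient, since $\sum_s\zeta_s\le\|\{s^{-1}\}\|_{l_2}\,\|\{s\zeta_s\}\|_{l_2}\le CZ$ by Cauchy--Bunyakovsky; the finer oscillatory argument is needed only for $E_0'$.

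The step that does not work as written is the final appeal to \eqref{RBbound}. That inequality is the \emph{lower} frame bound $\bigl\|\sum a_{nk}v_{nk}\cos(\rho_{nk}x)\bigr\|_{L_2}\ge\eps\|\{a_{nk}\}\|_{l_2}$, and a lower bound cannot yield the Bessel-type \emph{upper} estimate $\|E_0'\|_{L_2}^2\le C\sum_s(s\zeta_s)^2$ you invoke; moreover, after the product-to-sum reduction the system appearing in $E_0'$ consists of products such as $\sin(\rho_j x)\cos(\rho_j x)$, not of the vectors $v_{nk}\cos(\rho_{nk}x)$, so neither \eqref{RBbound} nor its upper counterpart applies directly. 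The paper closes this gap by an elementary device: one may assume that for all large $s$ every index in $J_s$ has a common first coordinate $n(s)$ (otherwise $Z=\infty$ and the lemma is vacuous), and then the frequencies in the leading term of each $P_{sj}$ can be replaced by the integer $2n(s)$, the replacement error being absorbed into the $O(\zeta_s)$ remainder. The leading part then lives on the genuinely orthogonal system $\{\sin(2n(s)x)\}$, and Parseval's identity gives the required $l_2$ summation with coefficients $|k_{sj}|\le Cn(s)|\rho_j(J_s)-\rho_1(J_s)|$, hence the bound $CZ$. To complete your argument you would need either this reduction to an exactly orthogonal system or a uniform Bessel bound for the perturbed sine system $\{\sin(2\rho_j(J_s)x)\}$ (which follows from the asymptotics \eqref{asymptla}, but must be stated and proved, and is not \eqref{RBbound}).
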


\begin{proof}
Suppose that $(Q, h, H)$, $(\tilde Q, \tilde h, \tilde H)$, and a partition $\mathscr P = \{ J_s \}_{s \ge 1}$ satisfy the hypotheses of the lemma.  
Using the notations from Definition~\ref{def:Js},
represent the matrix function \eqref{defE0} in the form
\begin{equation} \label{sumEs}
E_0(x) = \sum_{s = 1}^{\infty} E_s(x), \quad
E_s(x) := 
\sum_{u \in J_s} \bigl( \vv(x, \la_u) \be_u \tilde \vv^*(x, \la_u) - \vv(x, \tilde \la_u) \tilde \be_u \tilde \vv^*(x, \tilde \la_u) \bigr).
\end{equation}

Grouping the terms implies
\begin{multline} \label{defEs}
E_s(x) = \sum_{u \in J_s} \bigl(\vv(x, \la_u) - \vv(x, \la_{u_s})\bigr) \be_u \tilde \vv^*(x, \la_u) + \sum_{u \in J_s} \vv(x, \la_{u_s}) \be_u \bigl(\tilde \vv^*(x, \la_u) - \tilde \vv^*(x, \la_{u_s})\bigr) \\
- \sum_{u \in J_s} \bigl(\vv(x, \tilde \la_u) - \vv(x, \tilde \la_{u_s})\bigr) \tilde \be_u \tilde \vv^*(x, \tilde \la_u) - \sum_{u \in J_s} \vv(x, \tilde\la_{u_s}) \tilde \be_u \bigl(\tilde \vv^*(x, \tilde \la_u) - \tilde \vv^*(x, \tilde \la_{u_s})\bigr) \\
+ \bigl(\vv(x, \la_{u_s}) - \vv(x, \tilde \la_{u_s})\bigr) \be(J_s) \tilde \vv^*(x, \la_{u_s}) + \vv(x, \tilde \la_{u_s}) (\be(J_s) - \tilde \be(J_s)) \tilde \vv^*(x, \la_{u_s})  \\ + \vv(x, \tilde \la_{u_s}) \tilde \be(J_s) \bigl( \tilde \vv^*(x, \la_{u_s}) - \tilde \vv^*(x, \tilde \la_{u_s})\bigr).
\end{multline}

Using the asymptotics \eqref{asymptvv} and \eqref{asymptla}, the estimates \eqref{boundabove} and $\| Q \|_{L_2} + \| h \| \le R$, Schwarz's Lemma (see \cite[Lemma~1.6.1]{FY01}), and \eqref{defzeta}, we obtain
\begin{gather*}
\| \vv(x, \la_u) \| \le C, \quad
\|\vv(x, \la_u) - \vv(x, \la_w) \| \le C |\rho_u - \rho_w| \le C \zeta_s, 
\end{gather*}
where $\zeta_s := \zeta(J_s)$ \eqref{defzeta}, $u,w \in J_s$ and $x \in [0,\pi]$. Obviously, the similar estimates hold if $\la_u$ is replaced by $\tilde \la_u$ and/or $\vv$ is replaced by $\tilde \vv^*$.
Here and below in this proof, we mean that $C = C(R)$.

Using \eqref{asymptbe}, \eqref{defzeta}, and $\be_u \ge 0$, we get
\begin{equation} \label{bej}
\|\be_u \|, \| \tilde \be_u \| \le C, \: u \in J_s, \qquad \| \be(J_s) - \tilde \be(J_s) \| \le \zeta_s.
\end{equation}

Consequently, there holds
$$
\| E_s(x) \| \le C \zeta_s, \quad s \ge 1, \quad x  \in [0,\pi].
$$

Recalling $Z = \| \{ s \zeta_s \} \|_{l_2}$ \eqref{defZ} and \eqref{sumEs}, we obtain
\begin{equation} \label{estE0}
\| E_0(x) \| \le C \sum_{s = 1}^{\infty} \zeta_s \le C Z.
\end{equation}

Proceed to estimating the derivative 
\begin{equation} \label{serE0p}
E_0'(x) = \sum\limits_{s = 1}^{\infty} E_s'(x). 
\end{equation}
Consider the first term of the derivative of \eqref{defEs}:
\begin{equation} \label{Psj}
P_u(x) := \bigl( \vv'(x, \la_u) - \vv'(x, \la_{u_s})\bigr) \beta_u \tilde \vv^*(x, \la_u), \quad u \in J_s.
\end{equation}
Analogously to \eqref{asymptvv}, we have
\begin{equation} \label{asymptvvd}
\vv'(x, \rho^2) = -\rho \sin \rho x \, I_m + O (\exp(|\mbox{Im}\,\rho|x)),
\end{equation}
where the $O$-estimate is uniform on $\mathcal P_R$. Using \eqref{asymptvv}, \eqref{defzeta}, \eqref{bej}, \eqref{Psj}, and \eqref{asymptvvd}, we deduce
\begin{equation} \label{smPj}
P_u(x) = \bigl( \rho_{u_s} \sin \rho_{u_s} x - \rho_u \sin \rho_u x\bigr) \be_u \cos \rho_u x + r_u(x), \quad \| r_u(x) \| \le C \zeta_s, \quad x \in [0,\pi].
\end{equation}

Without loss of generality, we may assume that, for all sufficiently large $s$ ($s \ge s_*$), the inclusions $(n_1, k_1) \in J_s$ and $(n_2, k_2) \in J_s$ imply $n_1 = n_2 =: n(s)$. Otherwise, in view of \eqref{asymptla}, \eqref{defzeta}, and \eqref{defZ}, we have $Z = \infty$ and the assertion of the lemma holds automatically. For $s < s_*$, put $n(s) = 1$. Then \eqref{smPj} can be reduced to the form
$$
P_u(x) = k_u \be_u \sin (2 n(s) x) + z_u(x),
$$
where
$$
|k_u| \le C n(s) |\rho_u - \rho_{u_s}|, \quad \| z_u(x) \| \le C \zeta_s, \quad u \in J_s, \: x \in [0,\pi].
$$
Hence $\{ k_u \}_J \in l_2$ and $\|\{ k_u \}_J \|_{l_2} \le CZ$, so the trigonometric series $\sum\limits_{s = 1}^{\infty} \sum\limits_{u \in J_s} k_u \be_u \sin (2 n(s) x)$ converges in $L_2((0,\pi); \mathbb C^{m \times m})$ and its norm is bounded by $C Z$. The series $\sum\limits_{u \in J} z_u(x)$ of remainder terms converges absolutely and uniformly by $x \in [0,\pi]$ and has the same upper bound. Thus
$$
\left\| \sum_{u \in J} P_u(x) \right\|_{L_2} \le C Z.
$$
Analyzing analogously the other terms in \eqref{defEs}, we conclude that the series \eqref{serE0p} converges in $L_2((0,\pi); \mathbb C^{m \times m})$ and $\| E_0'(x) \|_{L_2} \le C Z$. Together with \eqref{estE0}, this completes the proof.
\end{proof}

Combining Theorem~\ref{thm:boundinv}, Lemma~\ref{lem:estE0}, and Proposition~\ref{prop:relQhH}, we arrive at Theorem~\ref{thm:unistab}. Proceed to examples illustrating the latter theorem. 

\begin{example} \label{ex:2}
This example shows the significance of using non-trivial partitions of eigenvalues into groups in accordance with Definition~\ref{def:Js}, not limiting by the partition from Example~\ref{ex:1}.
For $m = 2$, consider a family of boundary value problems $\mathcal L_{\de}$ and $\tilde{\mathcal L}_{\de}$, depending on a small parameter $\de > 0$ and having the corresponding spectral data:
\begin{gather*}
    \rho_{n1}^{\de} = \tilde \rho_{n1}^{\de} = n - 1 - \frac{\de}{n^2}, \quad \rho_{n2}^{\de} = \tilde \rho_{n2}^{\de} = n - 1 + \frac{\de}{n^2} \\
    v_{n1}^{\de} = \sqrt{\frac{2}{\pi}} \begin{bmatrix} 1 \\ 0 \end{bmatrix}, \quad v_{n2}^{\de} = \sqrt{\frac{2}{\pi}} \begin{bmatrix} 0 \\ 1 \end{bmatrix}, \quad
    \tilde v_{n1}^{\de} = \frac{1}{\sqrt{\pi}} \begin{bmatrix} 1 \\\ 1\end{bmatrix}, \quad \tilde v_{n2}^{\de} = \frac{1}{\sqrt{\pi}} \begin{bmatrix} 1 \\ -1 \end{bmatrix}.
\end{gather*}

Thus, the spectra of $\mathcal L_{\de}$ and $\tilde{\mathcal L}_{\de}$ coincide with each other, and they are obtained by
splitting the eigenvalues of the problem $\mathcal L_0 = \mathcal L(0_m,0_m,0_m)$. The full-rank weight matrices split according to \eqref{findal} into different orthogonal bases $\{ v_{n1}^{\de}, v_{n2}^{\de} \}$ and $\{ \tilde v_{n1}^{\de}, \tilde v_{n2}^{\de} \}$ in $\mathbb C^2$.
Note that
$$
\be_n^{\de} = \be_{n1}^{\de} + \be_{n2}^{\de} = \be_n^0, \quad \be_{nk}^{\de} := v_{nk}^{\de} (v_{nk}^{\de})^*, 
$$
and similar relations are valid for $\tilde \be_{nk}^{\de}$.

In this case, the indices $J$ should not be divided into one-element groups as in Example~\ref{ex:1}, since the series \eqref{Zex1} diverges. So, we compose two-element groups $J_n := \{ (n,k) \colon k = 1, 2 \}$, $n \ge 1$. Then 
\begin{align*}
\zeta(J_n) & = |\rho_{n1}^{\de} - \tilde \rho_{n1}^{\de}| + 
|\rho_{n2}^{\de} - \tilde \rho_{n2}^{\de}| + |\rho_{n1}^{\de} - \rho_{n2}^{\de}| + |\tilde \rho_{n1}^{\de} - \tilde \rho_{n2}^{\de}| + \| \be_n - \tilde \be_n \| = \frac{4 \de}{n^2}, \\
Z & = 4 \de \sqrt{\sum_{n = 1}^{\infty} \frac{1}{n^2}}.
\end{align*}

Thus, Theorem~\ref{thm:unistab} implies the estimate
$$
\| Q_{\de} - \tilde Q_{\de} \|_{L_2} + \| h_{\de} - \tilde h_{\de} \| + \| H_{\de} - \tilde H_{\de} \| \le C \de, 
$$
whose left-hand side tends to zero as $\de \to 0$.

A feature of this example is that the both problems are obtained by splitting the eigenvalues of a ``virtual'' problem (in this case, of $\mathcal L_0$). Therefore, although all the eigenvalues of $\mathcal L_{\de}$ and $\tilde{\mathcal L}_{\de}$ are simple, we have to join them into groups in order to show the proximity of the potentials $Q_{\de}$ and $\tilde Q_{\de}$.
\end{example}

\begin{example}
Let $(Q, h, H) \in \mathcal P$ be fixed and let finite spectral data $\{ \la_{nk}, \be_{nk} \}_{n = \overline{1,p}, k = \overline{1,m}}$ ($p \in \mathbb N$) be given. Then, we can complete the spectral data as follows:
$$
\la_{nk}^{(p)} = \begin{cases}
                    \la_{nk}, & n \le p, \\
                    \tilde \la_{nk}, & n > p,
                \end{cases} \quad
\be_{nk}^{(p)} = \begin{cases}
                    \be_{nk}, & n \le p, \\
                    \tilde \be_{nk}, & n > p,
                \end{cases}
$$
where $\{ \tilde \la_{nk}, \tilde \be_{nk} \}_{n \ge 1, \, k = \overline{1,m}}$ are the spectral data of the problem $\tilde {\mathcal L} = \mathcal L(0_m,0_m,0_m)$, which is considered in Section~\ref{sec:maineq} in more detail.
Denote by $(Q_p, h_p, H_p)$ the so-called finite data approximation that is reconstructed by using $\mathscr S_p := \{ \la_u^{(p)}, \be_u^{(p)} \}_J$.
For each fixed $p \ge 1$, consider two spectral data sets $\mathscr S = \{ \la_u, \be_u \}_J$ and $\mathscr S_p$ together with the following partition $\mathscr P_p = \{ J_s^{(p)} \}_{s \ge 1}$ of the set $J$: 
\begin{align*}
    & J_{(n-1)m+k}^{(p)} := \{ (n,k) \},  \quad n = \overline{1,p}, \quad k = \overline{1,m}, \\
    & J_{p(m-1) + n}^{(p)} := \{ (n,k) \colon k = \overline{1,m} \}, \quad n > p.
\end{align*}
According to \eqref{defzeta}, we have
\begin{align*}
& \zeta(J_s^{(p)}) = 0, \quad s = \overline{1, mp}, \\
& \zeta(J_{p(m-1) + n}^{(p)}) = \sum_{k = 1}^m |\rho_{nk} - \tilde \rho_n| + \sum_{k = 2}^m |\rho_{nk} - \rho_{n1}| + \| \be_n - \tilde \be_n \|, \quad n > p.
\end{align*}
In view of the asymptotics \eqref{asymptla} and \eqref{asymptbe}, we get
$$
Z(\mathscr P_p, \mathscr S, \mathscr S_p) \le C \bigl( \| \{ \varkappa_{nk} \}_{n > p} \|_{l_2} + \|\{\mathscr K_n \}_{n > p} \|_{l_2}\bigr).
$$
The right-hand side obviously tends to zero as $p \to \infty$. Therefore, Theorem~\ref{thm:unistab} implies the convergence of finite data approximations to the parameters of the problem $\mathcal L$:
$$
\lim_{p \to \infty} \bigl( \| Q - Q_p \|_{L_2} + \| h - h_p \| + \| H - H_p \| \bigr) = 0.
$$
\end{example}

\section{General case} \label{sec:gen}

In this section, we remove the restriction \eqref{omega0}, and generalize Theorems~\ref{thm:bounddir}, \ref{thm:boundinv}, and \ref{thm:unistab} to this case.

Consider the problem $\mathcal L(Q, h, H)$ of form \eqref{eqv}--\eqref{bc} under the condition \eqref{sa}. Denote
\begin{equation} \label{omega}
\om := h + H + \frac{1}{2}\int_0^{\pi} Q(x) dx
\end{equation}
Without loss of generality, we may assume that
\begin{equation} \label{condom}
\om = \mbox{diag} \{ \om_k \}_{k = 1}^m, \quad \om_1 \le \om_2\le \dots \le\om_m.
\end{equation}
Indeed, it follows from \eqref{sa} and \eqref{omega} that $\om = \om^*$, so the condition \eqref{condom} can be achieved by a unitary transform.

In this section, we assume that $\omega$ is a fixed constant matrix satisfying \eqref{condom}. Denote by $\mathcal P_{\om, R}$ the set of triples $(Q, h, H)$ of class \eqref{classP} satisfying \eqref{sa}, \eqref{omega} and $\| h \| + \| H \| + \| Q \|_{L_2} \le R$.

\begin{thm} \label{thm:bounddir1}
Let $R > 0$ be fixed. Then the spectral data $\{ \la_u, v_u \}_J$ of the problem \eqref{eqv}--\eqref{bc} with parameters $(Q, h, H) \in \mathcal P_{\om, R}$ fulfill the asymptotic formulas \eqref{asymptV},
\begin{align} \label{asymptla1}
& \rho_{nk} = \sqrt{\la_{nk}} = n - 1 + \frac{\om_k}{\pi n} + \frac{\varkappa_{nk}}{n}, \quad \{ \varkappa_{nk} \} \in l_2, \\ \label{asymptV1}
& \sum_{s \colon \om_s = \om_k} v_{ns} v_{ns}^* = \frac{2}{\pi} I_{\langle k \rangle} + \mathscr K_{n\langle k \rangle}, \quad \{ \mathscr K_{n \langle k \rangle} \} \in l_2(\mathbb C^{m \times m}),
\end{align}
where $n \ge 1$, $k = \overline{1,m}$, and
$$
I_{\langle k \rangle} := [I_{\langle k \rangle ij}]_{i,j = 1}^m, \quad I_{\langle k \rangle ij} = \begin{cases}
                                    1, & i = j \: \text{and} \: \om_i = \om_k, \\
                                    0, & \text{otherwise}.
                                \end{cases}
$$
Furthermore, the sequence $\{ v_u \cos (\rho_u x) \}_J$ is a Riesz basis in $L_2((0,\pi); \mathbb C^m)$ and the estimates \eqref{boundabove}, \eqref{RBbound} hold together with
$$
\| \{ \mathscr K_{n\langle k \rangle} \}_{n \ge 1} \|_{l_2} \le \Omega, \quad k = \overline{1,m},
$$
where constants $\Omega > 0$ and $\eps > 0$ depend only on $R$
and $\omega$.
\end{thm}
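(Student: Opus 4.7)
The proof will follow the three-part structure of the proof of Theorem~\ref{thm:bounddir} in Section~\ref{sec:bounddir}, with modifications to accommodate the splitting induced by the diagonal matrix $\omega$. First, the asymptotic formulas \eqref{asymptla1} and \eqref{asymptV1} for fixed $(Q,h,H) \in \mathcal{P}_{\omega,R}$ follow from the results of \cite{Bond11, Bond19}: the extra term $\omega_k/(\pi n)$ in \eqref{asymptla1} arises from the coefficient $\omega$ in the characteristic determinant expansion, while the block structure of \eqref{asymptV1} reflects the asymptotic clustering of eigenvalues with equal $\omega_k$. The uniform remainder estimates in $l_2$-norm follow by tracking the dependence on $\|Q\|_{L_2} + \|h\| + \|H\|$ throughout those derivations.

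Next, the uniform lower bound $\|v_{nk}\| \ge c(R,\omega)$ is obtained by repeating the compact embedding argument of Lemma~\ref{lem:boundd}. Passing to the quasi-derivative form \eqref{eqsi}--\eqref{bcsi}, the pairs $(\sigma_p, \check H_p)$ corresponding to a sequence $(Q_p,h_p,H_p) \in \mathcal{P}_{\omega,R}$ lie in a bounded ball of $W_2^1([0,\pi]; \mathbb{C}^{m\times m}) \times \mathbb{C}^{m\times m}$. We extract a subsequence converging strongly in $W_2^\alpha \times \mathbb{C}^{m\times m}$ with $\tfrac12 < \alpha < 1$; the continuous embedding $W_2^\alpha[0,\pi] \hookrightarrow C[0,\pi]$ gives pointwise convergence, so $h_p \to h$, $H_p \to H$, $\int_0^\pi Q_p \to \int_0^\pi Q$, which preserves \eqref{omega} in the limit and yields $(Q,h,H) \in \mathcal{P}_{\omega,R}$. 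The rank-of-weight-matrix argument then produces the required contradiction.

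For the Riesz basis estimate \eqref{RBbound}, the main adaptation lies in choosing appropriate reference functions $\mathring{\chi}_{nk}$ in the analogue of Lemma~\ref{lem:RBbound}. Instead of $v_{nk}\cos((n-1)x)$, one uses $v_{nk}\cos(\rho_{n\langle k\rangle}^\circ x)$ with $\rho_{n\langle k\rangle}^\circ := n - 1 + \omega_k/(\pi n)$, so that $\|\chi_{nk} - \mathring{\chi}_{nk}\|_{L_2} = O(1/n)$ uniformly on $\mathcal{P}_{\omega,R}$. After localization at $n \le N$, one extracts a limit of $\{v_{nk}^{(p)}\}_{n\le N}$, passes to the problem $(Q,h,H) \in \mathcal{P}_{\omega,R}$, and recovers a lower bound through a Riesz basis inequality for the limit. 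The principal technical obstacle is that the block structure of \eqref{asymptV1} only guarantees asymptotic orthogonality within each $\omega_k$-block of columns of $V_n$; consequently, the coefficient transform \eqref{defbnlarge} must be carried out block-by-block via the pseudoinverse of the block-decomposed $V_n$, and the bound \eqref{transab} has to be re-derived in this block-wise form. Once this is in place, the limit argument of Lemma~\ref{lem:RBbound} carries over with only notational modifications, yielding \eqref{RBbound} with $\varepsilon = \varepsilon(R,\omega) > 0$.
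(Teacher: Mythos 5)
Your proposal follows essentially the same route as the paper, which for this theorem only remarks that the proof strategy is analogous to that of Theorem~\ref{thm:bounddir} and omits the details; your fleshing-out of the three steps (uniform asymptotics from \cite{Bond11, Bond19}, the compact-embedding argument preserving \eqref{omega} in the limit, and the limiting Riesz-basis argument with modified reference functions) is exactly that analogy carried out. One small remark: the ``principal technical obstacle'' you identify is not actually present, because \eqref{asymptV} is asserted to hold in full in the general case as well, so the columns of $V_n$ are uniformly asymptotically orthogonal across all of $\mathbb C^m$ (not merely within each $\om_k$-block) and the whole-matrix transform \eqref{defbnlarge} with $V_n^{-1}$ carries over verbatim, making the block-wise pseudoinverse unnecessary.
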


Thus, the eigenvalues in the general case \eqref{condom} split into groups $\{ \la_{nk} \}_{k = 1}^m$ according to the first term of the asymptotics \eqref{asymptla1}, and these groups split into smaller groups $\{ \la_{ns} \colon \om_s = \om_k \}$ according to the second term. This makes the analysis of the inverse problem more technically complicated. 

The necessary and sufficient conditions for the solvability of Inverse Problem~\ref{ip:1} in the case \eqref{omega} are obtained from Theorem~\ref{thm:char} by replacing \eqref{asymptla} with \eqref{asymptla1} and adding the requirement \eqref{asymptV1} (see \cite{Bond19}). Indeed, in view of Definition~\ref{def:E} and \eqref{findal}, the condition \eqref{asymptV1} is equivalent to the asymptotics of \cite[Lemma 2]{Bond19} for the weight matrices:
\begin{equation} \label{asymptbens}
\sum_{s \colon \om_s = \om_k} \be_{ns} = \sum_{s \colon \om_s = \om_k} \al'_{ns} = \frac{2}{\pi} I_{\langle k \rangle} + \mathscr K_{n \langle k \rangle}, \quad n \ge 1, \: k = \overline{1,m}.
\end{equation}

For $\omega$ satisfying \eqref{condom}, $\Omega > 0$, and $\eps > 0$, denote by $\mathcal S_{\om,\Omega,\eps}$ the set of all the sequences $\{ \la_u, v_u \}_J$ of $\mathcal S$ satisfying all the assertions of Theorem~\ref{thm:bounddir1} with the initially fixed $\omega$, $\Omega$, and $\eps$.

\begin{thm} \label{thm:boundinv1}
Let $\Omega > 0$ and $\eps > 0$ be fixed. Then a sequence $\{ \la_u, v_u \}_J$ of $\mathcal S_{\om, \Omega, \eps}$ is the spectral data of a unique boundary value problem $\mathcal L(Q, h, H)$ with $(Q, h, H) \in \mathcal P_{\om,R}$, where $R$ depends only on $\om$, $\Omega$, and $\eps$. 
\end{thm}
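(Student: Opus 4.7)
The plan is to mirror the two-step proof of Theorem~\ref{thm:boundinv} (Lemmas~\ref{lem:boundR} and~\ref{lem:unibound}), with the model problem and main equation adapted to the non-trivial $\omega$. First, I would replace the zero model $\mathcal L(0_m,0_m,0_m)$ of Section~\ref{sec:maineq} by a diagonal reference problem $\tilde{\mathcal L} = \mathcal L(0_m, \omega_1, \omega_2)$, where $\omega_1,\omega_2$ are diagonal with $\omega_1 + \omega_2 = \omega$. This problem decouples into $m$ scalar Robin problems and hence its eigenvalues and norming vectors have exactly the asymptotic structure \eqref{asymptla1}--\eqref{asymptV1} of $\mathcal S_{\om,\Omega,\eps}$. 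Solutions $\tilde \vv(x,\la)$, the auxiliary functions \eqref{defDt}--\eqref{defWnt}, the sequence $\tilde \psi(x)$ and the operator $\tilde{\mathcal R}(x)$ are defined exactly as before, and Theorem~\ref{thm:maineq} remains valid since its proof relied only on the general identity \eqref{relvv} and the solvability of the main equation.

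Next, I would prove the analog of Lemma~\ref{lem:boundR}: if $\{\la_{nk},v_{nk}\} \in \mathcal S_{\om,\Omega,\eps}$, then $\|(I+\tilde{\mathcal R}(x))^{-1}\|_{B \to B} \le C(\om,\Omega,\eps)$ uniformly on $[0,\pi]$. Arguing by contradiction, take a sequence $\{\la_{nk}^{(p)}, v_{nk}^{(p)}\}_{p \ge 1} \subset \mathcal S_{\om,\Omega,\eps}$ for which the norm blows up, and pass to weak limits $\hat\rho_{nk} = \lim_p \hat\rho_{nk}^{(p)}$ and $v_{nk} = \lim_p v_{nk}^{(p)}$. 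The uniform bounds \eqref{boundabove}, \eqref{asymptV1} transfer to the limit and imply the coarser asymptotics $\rho_{nk} = n - 1 + O(n^{-1})$, $V_n^* V_n = \tfrac{2}{\pi} I_m + O(n^{-1})$ needed for Theorem~\ref{thm:charsi}. Completeness of $\{v_{nk} \cos (\rho_{nk} x)\}$ is established exactly as in Step~2 of the proof of Lemma~\ref{lem:boundR}, using only the uniform Riesz basis bound \eqref{RBbound}. Thus the limit data correspond to some problem \eqref{eqsi}--\eqref{bcsi} with $\sigma \in L_2((0,\pi); \mathbb C^{m\times m})$, yielding a bounded inverse operator $(I+\tilde{\mathcal R}(x))^{-1}$, while the convergence $\tilde{\mathcal R}^{(p)}(x) \to \tilde{\mathcal R}(x)$ in operator norm (Step~3) produces the desired contradiction.

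Finally, the analog of Lemma~\ref{lem:unibound} is obtained by substituting the uniform bound on $(I+\tilde{\mathcal R}(x))^{-1}$ together with the refined asymptotics \eqref{asymptla1}, \eqref{asymptV1} into the estimates of the type \eqref{estpsit}--\eqref{estvv}, and then using the reconstruction series \eqref{defE0} (now built with the diagonal model $\tilde{\mathcal L}$) to get $\|E_0\|_{W_2^1} \le C(\om,\Omega,\eps)$. Formulas \eqref{relQhH} then deliver $\|Q\|_{L_2} + \|h\| + \|H\| \le R(\om,\Omega,\eps)$; self-adjointness and \eqref{omega} for the recovered triple follow from the corresponding properties of $\tilde{\mathcal L}$ and the structure of $E_0$. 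Uniqueness of $(Q,h,H)$ is given by Theorem~\ref{thm:uniq}. The principal obstacle I anticipate is the block-asymptotic nature of \eqref{asymptV1}: the vectors $v_{nk}$ are ``nearly orthonormal'' only within subgroups indexed by the distinct values of $\omega_k$, so the combinatorial arguments in Step~3 of Lemma~\ref{lem:boundR} and in the $b_{nk}$ change-of-basis computation of Lemma~\ref{lem:RBbound} have to be carried out on each $\omega$-block separately, with the coupling between blocks controlled by the $o(1)$ terms in \eqref{asymptla1}.
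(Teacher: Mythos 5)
Your overall strategy --- a diagonal model problem $\mathcal L(0_m,\omega_1,\omega_2)$ with $\omega_1+\omega_2=\omega$, followed by the two lemmas on uniform invertibility of $I+\tilde{\mathcal R}(x)$ and on reconstruction via \eqref{relQhH} --- coincides with the paper's, which in fact only sketches this proof by declaring it ``analogous'' to the $\omega=0_m$ case while explicitly warning that ``the construction of the main equation is more complicated'' and deferring to \cite{Bond19}. The gap in your proposal sits exactly at that warned-about point. You assert that $\tilde\psi(x)$ and $\tilde{\mathcal R}(x)$ are ``defined exactly as before,'' but the definitions \eqref{defpsi} and \eqref{defR} hinge on all model eigenvalues with index $n$ coinciding ($\tilde\rho_{nk}=\tilde\rho_n=n-1$): there is a single divided-difference reference point, a single extra column $\psi_{n,m+1}$, and the key cancellation in $\tilde{\mathcal R}_{l,m+1,nk}$ exploits the $l_2$-smallness of $\sum_s\beta_{ls}-\tilde\beta_l$ and of $\rho_{ls}-\tilde\rho_l$. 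For your model problem $\tilde\rho_{nk}=n-1+\tfrac{\omega_k}{\pi n}+\dots$ depends on $k$, so a divided difference of $\vv(x,\rho_{nk}^2)$ taken at a reference point from a different $\omega$-group produces increments of order $(\omega_k-\omega_l)/(\pi n)$, for which $\{n\,|\rho_{nk}-\tilde\rho_{nl}|\}\notin l_2$; the estimates of type \eqref{estpsiRt} with a summable majorant $\xi_l$ then fail, and with them the compactness and continuity arguments of Lemma~\ref{lem:boundR} and the bounds of Lemma~\ref{lem:unibound}. The correct construction introduces one extra $\psi$-column per distinct value among $\{\omega_k\}_{k=1}^m$, takes divided differences within each group, and uses the group-wise asymptotics \eqref{asymptV1} (equivalently \eqref{asymptbens}) for the cancellations --- precisely the pattern the paper exhibits in Section~\ref{sec:graph}, where the two eigenvalue branches of the star-graph model force $m+2$ columns instead of $m+1$.

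A smaller point: your closing remark locates the block difficulty in the near-orthonormality of the $v_{nk}$, but \eqref{asymptV} still holds in the general case (it is listed among the conclusions of Theorem~\ref{thm:bounddir1}), so the full families $\bigl\{\sqrt{\pi/2}\,v_{nk}\bigr\}_{k=1}^m$ remain uniformly nearly orthonormal and the change-of-basis computations of Lemma~\ref{lem:RBbound} carry over essentially unchanged. The finer condition \eqref{asymptV1} is needed instead for the group-wise cancellations in the main equation described above. Apart from this relocation of the difficulty, your limit argument (weak limits, verification of the hypotheses of Theorem~\ref{thm:charsi}, completeness, operator-norm convergence of $\tilde{\mathcal R}^{(p)}(x)$) and the reconstruction step are sound.
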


\begin{defin}[Partition $\mathscr P'$] \label{def:Jsi}
Let $\om$ satisfy \eqref{condom} and let $\mathscr P = \{ J_s \}_{s \ge 1}$ be a partition introduced according to Definition~\ref{def:Js}. Define a partition $\mathscr P' = \{ J_{si} \}_{s \ge 1, \, i = \overline{1,m_s}}$ ($m_s \in \mathbb N$) of $J$ satisfying
$$
J_s = \bigcup_{i = 1}^{m_s} J_{si}, \quad J_{si} \cap J_{sj} = \varnothing, \quad i \ne j, \quad s \ge 1.
$$

Let $\{ \rho_u, \be_u \}_J$ and $\{ \tilde \rho_u, \tilde \be_u \}_J$ be $(\rho,\be)$-spectral data, induced by spectral data $\mathscr S$ and $\tilde{\mathscr S}$.
Denote
$$
\be(J_{si}) := \sum_{u \in J_{si}} \be_u, \quad \tilde \be(J_{si}) := \sum_{u \in J_{si}} \tilde{\be}_u,
$$
\vspace*{-20pt}
\begin{multline*}
\theta_s := \sum_{u \in J_s} |\rho_u - \tilde \rho_u| + 
\sum_{i = 1}^{m_s} \sum_{u, v \in J_{si}} \bigl( |\rho_u - \rho_v| + |\tilde \rho_u - \tilde \rho_v|\bigr) + \\ +
\frac{1}{s} \sum_{i = 1}^{m_s} \| \be(J_{si}) - \tilde \be(J_{si}) \| \biggr) + \| \be(J_s) - \tilde \be(J_s) \|,
\end{multline*}
where $\be(J_s)$ and $\tilde \be(J_s)$ are defined in \eqref{sumbe},
and put 
\begin{equation} \label{defTheta}
\Theta(\mathscr P', \mathscr S, \tilde{\mathscr S}) := \| \{ s \theta_s \}_{s \ge 1} \|_{l_2}.
\end{equation}
\end{defin}

The uniform stability of Inverse Problem~\ref{ip:1} in the general case \eqref{omega} is formulated as follows.

\begin{thm} \label{thm:unistab1}
Let $\mathcal L = \mathcal L(Q, h, H)$ and $\tilde{\mathcal L} = \mathcal L(\tilde Q, \tilde h, \tilde H)$ be two problems of form \eqref{eqv}--\eqref{bc}, whose spectral data $\mathscr S = \{ \la_u, v_u \}_J$ and $\tilde{\mathscr S} = \{ \tilde \la_u, \tilde v_u \}_J$, respectively, lie in $\mathcal S_{\om,\Omega,\eps}$. Then
$$
\| Q - \tilde Q \|_{L_2} + \|h - \tilde h\| + \|H - \tilde H \| \le C(\Omega, \eps) \Theta(\mathscr P', \mathscr S, \tilde{\mathscr S}),
$$
where $\mathscr P'$ is any partition satisfying Definition~\ref{def:Jsi}.
\end{thm}

Let us show that, for any two problems $\mathcal L$ and $\tilde{\mathcal L}$ with $\omega = \tilde \omega$, the partition $\mathscr P'$ can be chosen so that $\Theta < \infty$. Recall that \eqref{asymptV} is equivalent to \eqref{asymptbe} and \eqref{asymptV1}, to \eqref{asymptbens}. Denote by $\{ \varsigma_i \}_{i = 1}^q$ all the distinct numbers among $\{ \om_k \}_{k = 1}^m$ and put $J_{si} := \{ (s,k) \colon \om_k = \varsigma_i \}$, $s \ge 1$, $i = \overline{1,q}$. Then
$$
\be(J_{si}) = \sum_{k \colon \om_k = \varsigma_i} \be_{sk}, \quad
\be(J_s) = \be_s,
$$
so the asymptotics \eqref{asymptla1}, \eqref{asymptbe}, and \eqref{asymptbens} imply $\{ s \theta_s \} \in l_2$, which yields the claim.

The proof strategy for Theorems~\ref{thm:bounddir1}--\ref{thm:unistab1} is analogous to the one for Theorems~\ref{thm:bounddir}, \ref{thm:boundinv}, and \ref{thm:unistab}.
However, in the general case \eqref{omega}, the construction of the main equation is more complicated than the one provided in Section~\ref{sec:maineq} (see \cite{Bond19}), so we omit the proof details here.

\section{Sturm-Liouville operator on a graph} \label{sec:graph}

In this section, we apply our technique to the inverse Sturm-Liouville problem on the star-shaped graph that was studied in the papers \cite{Bond20-ipse, Bond20-amp}. We use the same notations, as in Sections~\ref{sec:prelim}--\ref{sec:unistab} for objects related to the matrix Sturm-Liouville problem \eqref{eqv}--\eqref{bc}, to denote analogous objects related to the problem on the graph in order to stress out the similarity of these two problems.

Consider a star-shaped graph consisting of $m$ edges of equal length $\pi$. Let each edge be equipped with a parameter $x_j \in (0,\pi)$ such that the value $x_j = 0$ corresponds to the boundary vertex and $x_j = \pi$, to the interior vertex.

Consider the eigenvalue problem for the Sturm-Liouville equations
\begin{equation} \label{eqvg}
-y_j''(x_j) + q_j(x_j) y_j(x_j) = \la y_j(x_j), \quad x_j \in (0,\pi), \quad j = \overline{1,m},
\end{equation}
on the edges of the star-shaped graph with the boundary conditions
\begin{equation} \label{bcg}
y_j(0) = 0, \quad j = \overline{1,m},
\end{equation}
and with the standard matching conditions at the interior vertex:
\begin{equation} \label{mc}
y_1(\pi) = y_j(\pi), \quad j =  \overline{2,m}, \qquad \sum_{j = 1}^m y_j'(\pi) = 0,
\end{equation}
where $y_j \in W_1^2[0,\pi]$, 
$\la$ is the spectral parameter, and $q_j$ ($j = \overline{1,m}$) are real-valued functions of $L_2(0,\pi)$ satisfying the condition
\begin{equation} \label{zeroqj}
\int_0^{\pi} q_j(x_j) \, d x_j = 0, \quad j = \overline{1,m}.
\end{equation}

The standard matching conditions \eqref{mc} are the special case of the so-called $\delta$-coupling (see \cite[Appendix~1]{Kur24}).
They express Kirchoff's law in electrical circuits, balance of tension in elastic string network, etc. (see, e.g., \cite{BCFK06, PPP04}). The condition \eqref{zeroqj} is imposed for technical simplicity, it can be removed as in Section~\ref{sec:gen}.

The boundary value problem \eqref{eqvg}--\eqref{mc} can be equivalently represented in the matrix form $\mathscr L = \mathscr L(Q)$:
\begin{gather} \label{eqv2}
    -Y''(x) + Q(x) Y(x) = \la Y(x), \quad x \in (0, \pi), \\ \label{bc2}
    Y(0) = 0, \quad T Y'(\pi) - T^{\perp} Y(\pi) =  0,
\end{gather}
where 
$$
Q(x) = \mbox{diag}\,\{ q_j(x) \}_{j = 1}^m, \quad
T = [T_{jk}]_{j,k = 1}^m, \quad T_{jk} = \frac{1}{m}, \quad T^{\perp} = I_m - T.
$$
Clearly, $T$ is an orthogonal projector of rank one and $T^{\perp}$ is the complementary projector. In view of \eqref{zeroqj}, we have
\begin{equation} \label{zeroQ}
Q \in L_2((0,\pi); \mathbb C^{m \times m}), \quad Q(x) = Q^*(x) \:\: \text{a.e. on} \:\: (0, \pi), \quad \int_0^{\pi} Q(x) \, dx = 0.
\end{equation}

Denote by $\mathcal P$ the class of the matrix functions satisfying \eqref{zeroQ} and by $\mathcal P^d$ the subclass of the diagonal matrix functions of $\mathcal P$. Below, we consider the problem $\mathscr L(Q)$ with arbitrary $Q \in \mathcal P$, not necessarily being diagonal, unless it is stated otherwise.

For any $Q \in \mathcal P$, the boundary value problem $\mathscr L(Q)$ is self-adjoint. Furthermore, it has the countable set of real eigenvalues $\{ \la_{nk} \}_{n \ge 1, \, k = \overline{1,m}}$ possessing the following asymptotics (see \cite[Theorem~7.4.7]{MP15}):
\begin{equation} \label{asymptla2}
\def\arraystretch{2}
\left.
\begin{array}{l}
\rho_{n1} := \sqrt{\la_{n1}} = n - \dfrac{1}{2} + \dfrac{\varkappa_{nk}}{n}, \\
\rho_{nk} := \sqrt{\la_{nk}} = n + \dfrac{\varkappa_{nk}}{n}, \quad k = \overline{2,m},
\end{array} \quad \right\} \quad n \ge 1.
\end{equation}

We assume that the eigenvalues are numbered counting with their multiplicities in the non-decreasing order: $\la_u \le \la_v$ if $u < v$, $u, v \in J$.

Denote by $\Phi(x, \la)$ the matrix solution of equation \eqref{eqv2} with the boundary conditions
$$
\Phi(0,\la) = I_m, \quad T \Phi'(\pi,\la) - T^{\perp} \Phi(\pi, \la) = 0_m.
$$
Put $M(\la) := \Phi'(0,\la)$ and introduce the weight matrices
\begin{equation} \label{defal2}
\al_u := -\Res_{\la = \la_u} M(\la), \quad u \in J.
\end{equation}
The sign minus is chosen so that $\al_u = \al_u^* \ge 0$.

\begin{prop}[\hspace*{-3pt}\cite{CM13, Xu19}] \label{prop:uniqg}
The data $\{\la_u,\al_u \}_J$ uniquely specify the potential $Q$ of the problem \eqref{eqv2}--\eqref{bc2}.
\end{prop}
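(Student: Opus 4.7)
The plan is to reduce the uniqueness assertion to the known uniqueness theorems for matrix Sturm-Liouville operators with general self-adjoint boundary conditions (see \cite{CM13, Xu19}) via the standard Weyl-matrix scheme, which already underlies Proposition~\ref{prop:uniqw}. First I would establish that the spectral data $\{\la_{nk}, \al_{nk}\}_{n \ge 1, \, k = \overline{1,m}}$ uniquely determine the Weyl matrix $M(\la) = \Phi'(0, \la)$. Since $M(\la)$ is meromorphic in $\la$ with poles precisely at the eigenvalues $\la_{nk}$ and residues $-\al_{nk}$ by \eqref{defal2}, it is enough to derive the Mittag-Leffler-type representation
\begin{equation*}
M(\la) = M_0(\la) - \sum_{n,k} \frac{\al_{nk}}{\la - \la_{nk}},
\end{equation*}
with the series converging with brackets (groups corresponding to coinciding eigenvalues), and with $M_0(\la)$ an explicit entire matrix function determined solely by the boundary conditions \eqref{bc2}. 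The derivation proceeds by contour integration of $M(\la)$ over expanding circles $|\la| = (N + \tfrac14)^2$ in the spectral parameter plane, using the eigenvalue asymptotics \eqref{asymptla2} and the standard upper bounds for $\Phi(x,\la)$ and its derivatives to show that the boundary integrals tend to zero. This is the classical step of the method of spectral mappings.

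Second, I would invoke the uniqueness theorem from \cite{CM13, Xu19}: two boundary value problems of the form \eqref{eqv2}--\eqref{bc2} sharing the same Weyl matrix must have the same potential $Q$. The standard realisation of this step uses the fundamental matrix solutions $C(x,\la)$, $S(x,\la)$ of \eqref{eqv2} with initial data $(I_m, 0_m)$ and $(0_m, I_m)$ at $x=0$, which do not depend on the boundary condition at $x = \pi$. The representation $\Phi(x,\la) = C(x,\la) + S(x,\la) M(\la)$ lets one form a matrix-valued transition $P(x,\la)$ between the two problems (for example, a bilinear combination of $\Phi, S, \tilde \Phi, \tilde S$) which becomes entire in $\la$ once $M = \tilde M$, whose entries are bounded as $|\la| \to \infty$ by the known asymptotics of $C, S$ and $\Phi$, and which therefore reduces to a constant by Liouville's theorem. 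Identifying this constant at $\la \to -\infty$ and combining with the Sturm-Liouville equation yields $Q \equiv \tilde Q$.

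The main obstacle is that the boundary condition \eqref{bc2} at $x = \pi$ is degenerate: $T$ is a rank-one projector, so the matrix $TY'(\pi) - T^{\perp} Y(\pi)$ mixes Dirichlet and Neumann components of $Y$ along complementary subspaces. This prevents a direct application of the uniqueness results formulated for Robin-type boundary conditions in Sections~\ref{sec:uchar}--\ref{sec:unistab}, and requires one to check that $\Phi(x,\la)$ is well-defined and that its asymptotics in suitable sectors of the $\la$-plane are strong enough to make the contour integration and the Liouville-type argument work. Once this is verified, the result follows directly from the general framework of \cite{CM13, Xu19}, after matching the normalisation of the Weyl matrices and of the weight matrices used there with \eqref{defal2}.
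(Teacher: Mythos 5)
The paper does not prove this proposition at all: it is imported verbatim from \cite{CM13, Xu19} (with the remark that for diagonal $Q$ it also follows from \cite{BW05, Yur05}), so there is no in-paper argument to compare against. Your reconstruction follows exactly the route those references take --- spectral data $\Rightarrow$ Weyl matrix $\Rightarrow$ potential, with the second step done by the method of spectral mappings and a Liouville-type argument --- so the overall plan is sound and is the ``right'' proof to have in mind. Two points deserve correction, though. First, the Mittag--Leffler representation as you wrote it cannot hold with an \emph{entire} $M_0(\la)$ determined only by the boundary conditions: by \eqref{asymptbe2} the weight matrices grow like $\|\al_{nk}\| \sim n^2$, so the series $\sum_{n,k} \al_{nk}/(\la-\la_{nk})$ diverges even with brackets, and in any case the regular part of $M(\la)$ depends on $Q$. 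The step that actually works (and is what the contour integration delivers) is to compare two problems with the same data: $M(\la)-\tilde M(\la)$ is then entire, is $O(\rho^{-1})$ on the circles $|\la|=(N+\tfrac14)^2$ by the standard estimates for $\Phi$, and hence vanishes identically; one also has to use the primed matrices $\al'_{nk}$ of Definition~\ref{def:E} to avoid counting a multiple pole several times. Second, the ``main obstacle'' you flag is not really an obstacle: $T Y'(\pi)-T^{\perp}Y(\pi)=0$ with $T$ an orthogonal projector is precisely the normal form of a general separated self-adjoint boundary condition, which is the exact setting of \cite{Xu19} (and of \cite{Bond20-ipse, Bond21-amp}), so the asymptotics of $\Phi$ in sectors and the Liouville argument are already established there; nothing beyond matching the normalisation \eqref{defal2} needs to be re-verified.
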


It is worth mentioning that, for $Q \in \mathcal P^d$, Proposition~\ref{prop:uniqg} follows from the uniqueness theorems of \cite{BW05, Yur05} for inverse spectral problems on graphs.

Introduce the matrices $\{ \al'_u \}_J$ and the vectors $\{ \mathscr E_u \}_J$ by using $\{ \al_u \}_J$ \eqref{defal2} according to Definition~\ref{def:E}.
The spectral data characterization for the problem \eqref{eqv2}--\eqref{bc2} is formulated as follows:

\begin{prop}[\hspace*{-3pt}\cite{Bond20-amp}] \label{prop:charg}
For a sequence $\{ \la_u, \al_u \}_J$ to be the eigenvalues and the weight matrices of a problem $\mathscr L(Q)$ with $Q \in \mathcal P$, the following conditions are necessary and sufficient:
\begin{enumerate}
\item $\la_u \in \mathbb R$, $\la_u \le \la_w$ for $u < w$, and the relations \eqref{structal}.
\item Asymptotics \eqref{asymptla2} and
\begin{equation} \label{asymptbe2}
\al_{n1} = \frac{2 (n - \frac{1}{2})^2}{\pi} \left( T + \frac{\mathscr K^I_n}{n}\right), \quad \be_n := \sum_{k = 2}^m \al_{nk}' = \frac{2 n^2}{\pi} \left( T^{\perp} + \frac{\mathscr K^{II}_n}{n}\right), \quad n \ge 1,
\end{equation}
where $\{ \mathscr K_n^I \}, \, \{ \mathscr K_n^{II} \} \in l_2(\mathbb C^{m \times m})$.
\item The sequence $\{ \rho_u^{-1} \sin (\rho_u x) \mathscr E_u \}_J$ is complete in $L_2((0,\pi); \mathbb C^m)$.
\end{enumerate}
\end{prop}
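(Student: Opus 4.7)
The plan is to establish necessity and sufficiency by adapting the machinery developed earlier in the paper for the matrix Sturm-Liouville problem \eqref{eqv}--\eqref{bc} to the Dirichlet boundary condition at $x=0$ and the projector-type boundary condition at $x=\pi$ in \eqref{bc2}. For necessity, I would start from the Weyl matrix $M(\la) = \Phi'(0,\la)$ and the characteristic function $\Delta(\la) := \det\bigl(T\Phi'(\pi,\la) - T^\perp \Phi(\pi,\la)\bigr)$, and use the standard asymptotic expansion of $\Phi(x,\la)$ as $|\rho|\to\infty$ to split the zeros of $\Delta$ into two families in accordance with the block decomposition $I_m = T + T^\perp$: a one-dimensional ``Dirichlet-like'' family $\rho_{n1} = n - \tfrac{1}{2} + O(n^{-1})$ coming from the range of $T$, and $(m-1)$ ``Dirichlet--Neumann-like'' families $\rho_{nk} = n + O(n^{-1})$, $k \ge 2$, coming from the range of $T^\perp$. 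Computing residues of $M(\la)$ via contour integration then produces the weight matrix asymptotics \eqref{asymptbe2} with the leading projector factors $T$ and $T^\perp$. The structural relations \eqref{structal} are automatic from self-adjointness, and Condition~3 follows from a quadratic-closeness argument: since $\vv(x,\rho^2) = \rho^{-1}\sin(\rho x) I_m + O(\rho^{-2})$, the sequence $\{\rho_{nk}^{-1}\sin(\rho_{nk}x)\mathscr E_{nk}\}$ is $l_2$-close to the orthonormal basis of normalized eigenfunctions, whose completeness is guaranteed by self-adjointness of $\mathscr L(Q)$.

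For sufficiency, I would follow the scheme of Sections~\ref{sec:maineq}--\ref{sec:boundinv}. Fix the model problem $\tilde{\mathscr L} = \mathscr L(0_m)$, whose spectral data can be computed explicitly from sines on the half-line. Starting from the contour-integration identity relating $\vv(x,\la)$ of the hypothetical $\mathscr L(Q)$ and the model $\tilde\vv(x,\la)$, derive a linear main equation of the form $\tilde\psi(x) = \psi(x)(I + \tilde{\mathcal R}(x))$ in a Banach space of infinite matrix sequences, where $\tilde\psi(x)$ and $\tilde{\mathcal R}(x)$ are built from the given data $\{\la_{nk}, \al_{nk}\}$ and the explicit model solutions. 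The asymptotic estimates in Condition~2 ensure that $\tilde{\mathcal R}(x)$ is bounded on the Banach space for each $x$, while Condition~3 is precisely what prevents the homogeneous equation from having nontrivial solutions, giving unique solvability of the main equation for every $x \in [0,\pi]$. From $\psi(x)$, recover the potential via reconstruction formulas analogous to \eqref{relQhH}--\eqref{defE0}, and verify that $Q \in L_2((0,\pi); \mathbb C^{m\times m})$, $Q^* = Q$ a.e., and $\int_0^\pi Q\,dx = 0_m$. A final check, exploiting Proposition~\ref{prop:uniqg} and the construction, confirms that the spectral data of the reconstructed $\mathscr L(Q)$ coincide with the prescribed sequence $\{\la_{nk}, \al_{nk}\}$.

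The hardest part will be the asymmetric splitting in \eqref{asymptla2}--\eqref{asymptbe2}: unlike Proposition~\ref{prop:charw}, where all eigenvalues concentrate near $n-1$ with a uniform leading weight $\tfrac{2}{\pi} I_m$, here two families with different first-order asymptotics ($n - \tfrac{1}{2}$ versus $n$) and weight growth of order $n^2$ must be handled simultaneously. Setting up the main equation so that $\tilde{\mathcal R}(x)$ is bounded requires rescaling the $\al_{nk}$ by appropriate powers of $\rho_{nk}$ and separating contributions coming from $T$ and $T^\perp$. Moreover, showing that the reconstructed $Q$ actually satisfies $\int_0^\pi Q(x)\,dx = 0_m$ rather than producing a nontrivial matrix $\omega$ (as in Section~\ref{sec:gen}) hinges on a delicate cancellation between the two parts of the weight asymptotics \eqref{asymptbe2}, which reflects the specific Dirichlet-plus-standard-matching structure of the graph problem.
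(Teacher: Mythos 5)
The paper does not actually prove Proposition~\ref{prop:charg}: it is imported as a known result from \cite{Bond20-amp}, so there is no in-paper argument to compare yours against. That said, your outline is consistent with how the result is established in that reference and with the parallel machinery the present paper builds for the interval problem and redevelops for the graph in Section~\ref{sec:graph}: necessity via asymptotics of the characteristic function and residues of the Weyl matrix split along $T$ and $T^{\perp}$, sufficiency via a main equation $\tilde\psi=\psi(I+\tilde{\mathcal R})$ whose boundedness is controlled by condition~2 and whose unique solvability is tied to condition~3, followed by a reconstruction formula of the type \eqref{difQ}. The rescaling difficulty you flag is real and is exactly what the paper handles via the normalization \eqref{defvnk} and the factors $n l^{-1}$ in the entries $\tilde{\mathcal R}_{ls,nk}$.

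Two concrete points need repair. First, in the necessity part you derive condition~3 from "quadratic closeness" of $\{\rho_{nk}^{-1}\sin(\rho_{nk}x)\mathscr E_{nk}\}$ to the orthonormal eigenfunction basis; but quadratic closeness to an orthonormal basis does not imply completeness (by Bari's theorem it only reduces completeness to $\omega$-linear independence, which must still be proved --- replace one basis element by a copy of another to see the failure). Example~\ref{ex:fail} shows, for the analogous interval problem, data with the correct asymptotics whose associated system is not complete, so asymptotic information alone cannot close this step. The standard correct argument is different: the transformation operator represents $\vv(x,\la)$ as $(I+\mathcal K)$ applied to $\rho^{-1}\sin(\rho x)I_m$ with $\mathcal K$ a Volterra operator, so $I+\mathcal K$ is boundedly invertible and maps the sine system onto the complete eigenfunction system, which transfers completeness. (Also, your characteristic function $\det\bigl(T\Phi'(\pi,\la)-T^{\perp}\Phi(\pi,\la)\bigr)$ vanishes identically by the definition of $\Phi$; the determinant must be formed from the solution $\vv$ normalized by \eqref{icsin}.) Second, the concluding step of sufficiency --- verifying that the reconstructed $\mathscr L(Q)$ has exactly the prescribed eigenvalues and weight matrices, which is the genuinely hard half of any characterization theorem --- is only named, not argued. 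As a roadmap the proposal is sound; as a proof it leaves these points open.
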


Thus, the form of the sequence in condition 3 of Propositions~\ref{prop:charw} and~\ref{prop:charg}
depends on the boundary condition at $x = 0$. In the graph case $Q \in \mathcal P^d$, an additional condition is required for the diagonality of the matrix potential (see \cite[Theorem~3.4]{Bond20-amp}).

Let $\vv(x, \la)$ be the solution of equation \eqref{eqv2} satisfying the initial conditions 
\begin{equation} \label{icsin}
\vv(0, \la) = 0_m, \quad \vv'(0,\la) = I_m. 
\end{equation}

The boundary value problem $\mathscr L(Q)$ has an orthonormal basis of vector eigenfunctions $\{ Y_u \}_J$ in $L_2((0,\pi); \mathbb C^m)$. Obviously, they can be represented as
\begin{equation} \label{defv}
Y_{n1}(x) = \bigl(n - \tfrac{1}{2}\bigr)\vv(x, \la_{n1}) v_{n1}, \quad Y_{nk}(x) = n \vv(x, \la_{nk}) v_{nk}, \quad k \ge 2,
\end{equation}
where $v_u \in \mathbb C^m$, $u \in J$. In other words,
$$
v_{n1} := \bigl(n - \tfrac{1}{2}\bigr)^{-1} Y_{n1}'(0), \quad
v_{nk} := n^{-1} Y_{nk}'(0), \quad k \ge 2.
$$

The normalizing coefficients $\bigl(n - \tfrac{1}{2}\bigr)$ and $n$ in \eqref{defv} are chosen according to the main parts of the eigenvalue asymptotics \eqref{asymptla2} because
$$
\vv(x, \rho^2) = \frac{\sin \rho x}{\rho} I_m + O\left( \rho^{-2} \exp(|\mbox{Im}\, \rho| x)\right), \quad |\rho| \to \infty.
$$
Consequently, the vectors $v_u$ satisfy the bounds \eqref{boundv}, which are convenient from the technical viewpoint.

Consider the following inverse spectral problem.

\begin{ip} \label{ip:g}
Given $\{ \la_u,  v_u \}_J$, find $Q$.
\end{ip}

In view of \eqref{defv}, the norming vectors are related to the weight matrices as follows:
\begin{equation} \label{findal2}
\al_u = \sum_{w\colon \la_w = \la_u} \be_w, \quad u \in J,
\end{equation}
where
\begin{equation} \label{defvnk}
\be_{n1} := \left( n - \tfrac{1}{2}\right)^2 v_{n1} v_{n1}^*, \quad
\be_{nk} := n^2 v_{nk} v_{nk}^*, \quad k = \overline{2,m}.
\end{equation}

Consequently, Propositions~\ref{prop:uniqg} and~\ref{prop:charg} immediately yield the following corollaries on the uniqueness for solution of Inverse Problem~\ref{ip:g} and on the spectral data characterization, respectively.

\begin{cor}
The spectral data $\{ \la_u, v_u \}_J$ uniquely specify the potential $Q$ of the problem \eqref{eqv2}--\eqref{bc2}.
\end{cor}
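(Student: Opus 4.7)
The plan is to reduce the statement to Proposition~\ref{prop:uniqg}, which gives uniqueness of $Q$ in terms of the weight matrices $\{\alpha_{nk}\}$. Hence it suffices to show that $\{\lambda_{nk}, v_{nk}\}$ algebraically determines $\{\lambda_{nk}, \alpha_{nk}\}$.

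First, given $\{\lambda_{nk}, v_{nk}\}_{n \ge 1,\, k = \overline{1,m}}$, one applies \eqref{defvnk} directly to produce the rank-one Hermitian matrices $\{\beta_{nk}\}$. Next, for each pair $(n,k)$, one collects the indices $(l,s)$ with $\lambda_{ls} = \lambda_{nk}$ (a finite set, by the eigenvalue asymptotics \eqref{asymptla2}) and sums the corresponding $\beta_{ls}$; by \eqref{findal2} this sum equals $\alpha_{nk}$. Thus every weight matrix is recovered, and Proposition~\ref{prop:uniqg} then delivers a unique $Q \in \mathcal P$.

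The only point requiring attention is to justify that \eqref{findal2} is really unambiguous, since the norming vectors $v_{nk}$ of a given problem $\mathscr L(Q)$ depend on the non-unique choice of orthonormal eigenbasis inside each eigenspace. This is handled by an argument parallel to Lemma~\ref{lem:relval}: if $\mathscr J_{nk} := \{(l,s) \colon \lambda_{ls} = \lambda_{nk}\}$ has $r$ elements and two admissible choices produce column matrices $V^{(1)}_{nk}$ and $V^{(2)}_{nk}$ of the vectors $\{v_{ls}\}_{(l,s) \in \mathscr J_{nk}}$, then $V^{(1)}_{nk} = V^{(2)}_{nk} S_{nk}$ for some unitary $S_{nk} \in \mathbb C^{r \times r}$, so the product $V_{nk} V_{nk}^* = \sum_{(l,s) \in \mathscr J_{nk}} v_{ls} v_{ls}^*$ is invariant under the choice. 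Combined with the scalar prefactors in \eqref{defvnk} and the residue computation of $\alpha_{nk}$ underlying Proposition~\ref{prop:charg}, this invariance yields \eqref{findal2} as stated.

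I expect this invariance verification to be the main (and still short) obstacle; the remainder of the argument is assembly from results already established. Accordingly, the whole proof can be written in a few lines: construct $\{\beta_{nk}\}$ from \eqref{defvnk}, sum over eigenspaces via \eqref{findal2} to obtain $\{\alpha_{nk}\}$, and invoke Proposition~\ref{prop:uniqg}.
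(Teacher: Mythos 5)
Your proposal is correct and follows essentially the same route as the paper: the paper likewise obtains the corollary by combining the relation \eqref{findal2} (with $\be_{nk}$ defined by \eqref{defvnk}) with Proposition~\ref{prop:uniqg}, the invariance of $\sum v_{ls}v_{ls}^*$ under unitary re-choice of the eigenbasis being exactly the content of the graph analogue of Lemma~\ref{lem:relval}.
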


Denote
\begin{equation} \label{defchi}
\chi_u := v_u \sin(\rho_u t) \:\: \text{if} \:\: \rho_u \ne 0 \:\: \text{and} \:\: \chi_u := v_u t \:\: \text{if} \:\: \rho_u = 0.
\end{equation}

\begin{cor}
For a sequence $\{ \la_u, v_u \}_J \in \mathcal S$ to be the spectral data of the problem \eqref{eqv2}--\eqref{bc2} with $Q \in \mathcal P$, the following conditions are necessary and sufficient:
\begin{enumerate}
\item Asymptotics \eqref{asymptla2}, \eqref{asymptV}, and 
\begin{equation} \label{asymptvn1} 
v_{n1} v_{n1}^* = \frac{2}{\pi} T + \frac{K_n^I}{n}, \quad \{ K_n^I\} \in l_2(\mathbb C^{m \times m})
\end{equation}
\item The sequence $\{ \chi_u \}_J$ defined by \eqref{defchi} is complete in $L_2((0,\pi); \mathbb C^m)$.
\end{enumerate}
\end{cor}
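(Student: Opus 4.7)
The plan is to deduce this corollary from Proposition \ref{prop:charg} using the bridge relations \eqref{findal2} and \eqref{defvnk} between the weight matrices $\{\alpha_{nk}\}$ and the norming vectors $\{v_{nk}\}$, in complete analogy with the derivation of Theorem \ref{thm:char} from Proposition \ref{prop:charw} in Section \ref{sec:uchar}. For necessity, given $\{\lambda_{nk}, v_{nk}\}$ arising from $\mathscr L(Q)$ with $Q \in \mathcal P$, Proposition \ref{prop:charg} supplies \eqref{structal} and \eqref{asymptbe2}, which I would translate back via \eqref{findal2}--\eqref{defvnk} into conditions 1 and 2. For sufficiency, I would define $\alpha_{nk}$ by \eqref{findal2}, verify the hypotheses of Proposition \ref{prop:charg}, and extract the desired $Q \in \mathcal P$.

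The first key step is the equivalence of the asymptotic formulas. Since \eqref{asymptla2} ensures that for all sufficiently large $n$ the eigenvalue $\lambda_{n1}$ is isolated from every $\lambda_{lk}$ with $(l,k) \neq (n,1)$, the relation \eqref{findal2} reduces there to $\alpha_{n1} = \beta_{n1} = (n - \tfrac12)^2 v_{n1} v_{n1}^*$, directly matching \eqref{asymptvn1} with the first part of \eqref{asymptbe2}. For the second part, \eqref{asymptV} combined with \eqref{asymptvn1} forces $v_{n1}$ to asymptotically occupy $\mathrm{Ran}\,T$ with norm $\sqrt{2/\pi} + O(n^{-1})$, while the remaining columns of $V_n$ stay approximately orthonormal (up to the scalar $\sqrt{2/\pi}$) within $\mathrm{Ran}\,T^{\perp}$. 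A block singular value decomposition of $V_n$ then yields $\sum_{k=2}^m n^2 v_{nk} v_{nk}^* = \tfrac{2n^2}{\pi}(T^{\perp} + \mathscr K_n^{II}/n)$ with $\{\mathscr K_n^{II}\} \in l_2(\mathbb C^{m \times m})$, which via the definition of $\alpha'_{nk}$ reproduces the second asymptotic in \eqref{asymptbe2}; the converse direction follows by reversing the argument. The structural conditions \eqref{structal} drop out at the same time from \eqref{findal2} together with the linear independence of $\{v_{ls} : \lambda_{ls} = \lambda_{nk}\}$, which is inherited from the linear independence of the corresponding orthonormal eigenfunctions $Y_{ls}$.

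For the equivalence of the completeness conditions, I would use the identity $\mathrm{span}\{v_{ls} : \lambda_{ls} = \lambda_{nk}\} = \mathrm{Ran}\,\alpha_{nk} = \mathrm{span}\{\mathscr E_{ls} : \lambda_{ls} = \lambda_{nk}\}$, where the first equality is a direct consequence of \eqref{findal2}--\eqref{defvnk} and the second is the defining property of $\mathscr E_{nk}$ in Definition \ref{def:E}. Consequently, the functions $\{\chi_{ls}\}$ and $\{\rho_{ls}^{-1}\sin(\rho_{ls} x)\mathscr E_{ls}\}$ span the same linear subspace within each multiplicity class (differing only by a change of basis and a nonzero scalar factor, with the case $\rho_{nk} = 0$ handled by the extension in \eqref{defchi}), and therefore their respective completeness in $L_2((0,\pi); \mathbb C^m)$ is equivalent. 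The main obstacle I anticipate is the bookkeeping in the asymptotic step, where the two different normalizations $(n - \tfrac12)^2$ and $n^2$ of $\beta_{n1}$ versus $\beta_{nk}$ ($k \geq 2$) must be cleanly disentangled from the single matrix $V_n^* V_n$; a tailored block variant of Lemma \ref{lem:asymptbe} using the projectors $T$ and $T^{\perp}$ should settle this, after which the remainder of the argument is a routine transcription of the proofs in Section \ref{sec:uchar}.
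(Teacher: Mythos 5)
Your proposal is correct and follows essentially the same route as the paper: both deduce the corollary from Proposition~\ref{prop:charg} via the bridge relations \eqref{findal2} and \eqref{defvnk}, reduce the asymptotics \eqref{asymptbe2} to \eqref{asymptvn1} together with the companion formula $\sum_{k=2}^m v_{nk}v_{nk}^* = \tfrac{2}{\pi}T^{\perp} + O(n^{-1})$ (the paper simply sums these two and invokes the singular-value-decomposition argument of Lemma~\ref{lem:asymptbe} to pass between $V_nV_n^*$ and $V_n^*V_n$, which is what your ``block SVD'' step amounts to), and handle the completeness conditions through the equality of spans $\mathrm{span}\{v_{ls}\colon \la_{ls}=\la_{nk}\} = \mathrm{span}\{\mathscr E_{ls}\colon \la_{ls}=\la_{nk}\}$ exactly as in the proof of Theorem~\ref{thm:char}. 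The paper's written proof only details the asymptotic equivalence and declares the rest obvious, so your additional remarks on \eqref{structal} and the $\rho_{nk}=0$ case are consistent elaborations rather than a different argument.
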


\begin{proof}
Let us show that the asymptotics \eqref{asymptV} and \eqref{asymptvn1} together are equivalent to \eqref{asymptbe2}. In view of Definition~\ref{def:E}, \eqref{findal2} and \eqref{defvnk}, the asymptotics \eqref{asymptbe2} can be rewritten as \eqref{asymptvn1} and
\begin{equation} \label{asymptvn2}
\sum_{k = 2}^m v_{nk} v_{nk}^* = \frac{2}{\pi} T^{\perp} + \frac{K_n^{II}}{n}, \quad \{ K_n^{II}\} \in l_2(\mathbb C^{m \times m}), \quad n \ge 1.
\end{equation}
Summing up \eqref{asymptvn1} and \eqref{asymptvn2}, we arrive at the formula
$$
V_n V_n^* = \frac{2}{\pi} I_m + \frac{K_n^{III}}{n}, \quad \{ K_n^{III} \} \in l_2(\mathbb C^{m \times m}), \quad n \ge 1,
$$
which yields \eqref{asymptV}. The other proof details are obvious.
\end{proof}

Proceed to the analogs of Theorems~\ref{thm:bounddir}, \ref{thm:boundinv}, and \ref{thm:unistab} for the problem \eqref{eqv2}--\eqref{bc2}. For $R > 0$, denote by $\mathcal P_R$ and $\mathcal P_R^d$ the sets of matrix functions $Q$ of $\mathcal P$ and $\mathcal P^d$, respectively, additionally satisfying $\| Q \|_{L_2} \le R$.

\begin{thm} \label{thm:bounddir2}
Let $R > 0$ be fixed. Then the spectral data $\{ \la_u, v_u \}_J$ of the problem \eqref{eqv2}--\eqref{bc2} with $Q \in \mathcal P_R$ fulfill the asymptotics \eqref{asymptV}, \eqref{asymptla2}, and \eqref{asymptvn1}
with the remainder terms satisfying the estimates \eqref{boundabove} and $\| \{ K_n^I \}_{n \ge 1} \|_{l_2} \le \Omega$, where $\Omega > 0$ depends only on $R$. Moreover, the sequence $\{\chi_u\}_J$ given by \eqref{defchi} is a Riesz basis in $L_2((0,\pi); \mathbb C^m)$ and the estimate \eqref{RBbound} holds with $\eps > 0$ depending only on $R$.
\end{thm}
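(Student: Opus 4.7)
The plan is to adapt the proof of Theorem~\ref{thm:bounddir} to the graph setting, replacing the Neumann-type initial conditions and $\cos(\rho_{nk} x)$ basis with the Dirichlet initial conditions \eqref{icsin} and the sine-type basis $\{\chi_{nk}\}$ from \eqref{defchi}. The eigenvalue asymptotics \eqref{asymptla2} and the weight-matrix asymptotics \eqref{asymptbe2} uniformly on $\mathcal{P}_R$ are already known (see \cite{MP15, Bond20-amp}); translating them via the relation \eqref{defvnk} between $v_{nk}$ and $\al_{nk}$ together with the singular-value decomposition of $V_n$ (as in Lemma~\ref{lem:asymptbe}) immediately yields the uniform bounds $\|\{\varkappa_{nk}\}\|_{l_2} \le \Omega(R)$ and $\|\{K_n^I\}\|_{l_2}, \|\{K_n\}\|_{l_2} \le \Omega(R)$. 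Thus the only nontrivial parts are: (a)~the uniform lower bound $\|v_{nk}\| \ge c(R) > 0$, and (b)~the uniform Riesz basis bound \eqref{RBbound}.

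For both (a) and (b), the strategy is proof by contradiction through the compact-embedding / weak-limit technique of Lemmas~\ref{lem:boundd} and~\ref{lem:RBbound}. Suppose a sequence $\{Q_p\} \subset \mathcal{P}_R$ violates the desired uniform estimate. Represent the problem $\mathscr{L}(Q_p)$ in the quasi-derivative form by introducing $\sigma_p(x) := \int_0^x Q_p(t)\,dt$, so that $\sigma_p \in W_2^1([0,\pi]; \mathbb{C}^{m\times m})$ with $\sigma_p(0) = 0$ and $\|\sigma_p\|_{W_2^1} \le C(R)$; note the boundary condition \eqref{bc2} stays intact since no $h$ or $H$ absorb into the Robin conditions as in Section~\ref{sec:bounddir}. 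By weak compactness of the ball in $W_2^1$ and the compact embedding $W_2^1 \hookrightarrow W_2^{\al}$ for $\frac{1}{2} < \al < 1$ (and $W_2^{\al} \hookrightarrow C$), extract a subsequence with $\sigma_p \to \sigma$ strongly in $C([0,\pi]; \mathbb{C}^{m\times m})$. The limit matrix $Q := \sigma' \in L_2((0,\pi); \mathbb{C}^{m\times m})$ inherits self-adjointness, and the constraint $\int_0^\pi Q\,dx = \sigma(\pi) = \lim \sigma_p(\pi)$ together with $\int_0^\pi Q_p = 0$ gives $Q \in \mathcal{P}_R$. Applying \cite[Lemma~6.7]{Bond21-amp} (or its direct analog for the graph), the spectral data converge in the appropriate weak sense: $\la_{nk}^{(p)} \to \la_{nk}$ and $\sum_{(l,s): \la_{ls} = \la_{nk}} \be_{ls}^{(p)} \to \al_{nk}$ for each fixed $(n,k)$.

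For case (a), the assumption $\|v_{n_0 k_0}^{(p)}\| \to 0$ for some fixed $(n_0, k_0)$ forces the rank of the partial sum $\sum_{(l,s) \ne (n_0, k_0), \la_{ls} = \la_{n_0 k_0}} \be_{ls}^{(p)}$ to still converge to $\al_{n_0 k_0}$, contradicting $\rank \al_{n_0 k_0} = r$ (multiplicity of $\la_{n_0 k_0}$) from condition~\eqref{structal} of Proposition~\ref{prop:charg}. For case (b), repeat the splitting argument of Lemma~\ref{lem:RBbound}: given arbitrary $a = \{a_{nk}\} \in l_2$, decompose $\sum_{n,k} a_{nk} \chi_{nk}^{(p)}$ into a low-frequency part ($n \le N$) and a high-frequency part ($n > N$). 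On the high-frequency side, use the uniform asymptotics \eqref{asymptla2}, \eqref{asymptV}, \eqref{asymptvn1} to replace $\sin(\rho_{nk}^{(p)} x)$ by $\sin(\rho_{nk}^{(0)} x)$ (with $\rho_{n1}^{(0)} = n - \tfrac{1}{2}$, $\rho_{nk}^{(0)} = n$ for $k \ge 2$) at cost $O(1/N)\|a\|_{l_2}$, introduce the transition vectors $b_n^{(p)} = V_n^{-1} V_n^{(p)} a_n$ with $\|b_n^{(p)}\| = \|a_n\|(1 + O(n^{-1}))$, and transfer back to the limit sequence $\{\chi_{nk}\}$; on the low-frequency side, use pointwise convergence $v_{nk}^{(p)} \to v_{nk}^{\diamond}$ (up to a further subsequence) and the fact that $\operatorname{span}\{v_{ls}^{\diamond} : \la_{ls} = \la_{nk}\} = \operatorname{span}\{v_{ls} : \la_{ls} = \la_{nk}\}$. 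This yields, for any $\eta > 0$ and suitable $N$ and $p$,
\begin{equation*}
\left\|\sum_{n,k} a_{nk} \chi_{nk}^{(p)}\right\|_{L_2} \ge (\eps(Q) \min\{1 - \eta, c\} - \eta) \|a\|_{l_2},
\end{equation*}
contradicting $\eps(Q_p) \to 0$.

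The main obstacle will be case (b): as in Section~\ref{sec:bounddir}, the algebra of transferring between the basis systems $\{\chi_{nk}^{(p)}\}$, $\{\chi_{nk}\}$, and the model system with $\rho$-values $n - \tfrac{1}{2}$ or $n$ is delicate, and here it is further complicated by the two-scale asymptotic structure of $v_{nk}$ (the vectors $v_{n1}$ are asymptotically aligned with $\operatorname{Ran} T$, while $\{v_{nk}\}_{k \ge 2}$ asymptotically span $\operatorname{Ran} T^{\perp}$, per \eqref{asymptvn1} and \eqref{asymptvn2}). A careful book-keeping of the splitting of the $b_n^{(p)}$ transition according to these two subspaces, combined with the uniform smallness estimate on the quadratic gap to the model basis, is needed to make the high-frequency replacement argument work uniformly.
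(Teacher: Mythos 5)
Your proposal is correct and follows essentially the same route as the paper, which states Theorem~\ref{thm:bounddir2} without a separate proof and relies precisely on transferring the arguments of Section~\ref{sec:bounddir} (the compact-embedding/weak-limit scheme of Lemmas~\ref{lem:boundd} and~\ref{lem:RBbound}, the rank argument via \eqref{structal}, and the two-block transition matrices reflecting the $T$/$T^{\perp}$ splitting) to the Dirichlet/sine setting. One small inaccuracy: in the quasi-derivative form the condition at $x=\pi$ does not stay intact, since $Y'(\pi)=Y^{[1]}(\pi)+\sigma(\pi)Y(\pi)$ turns \eqref{bc2} into $TY^{[1]}(\pi)+(T\sigma(\pi)-T^{\perp})Y(\pi)=0$; this is harmless because $\sigma_p(\pi)\to\sigma(\pi)$ under the embedding $W_2^{\al}\hookrightarrow C$, but it should be stated correctly.
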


Clearly, Theorem~\ref{thm:bounddir2} holds for any $Q \in \mathcal P_R^d$, that is, for the problem \eqref{eqvg}--\eqref{mc} on the star-shaped graph.

Analogously to Definition~\ref{def:B}, introduce the set $\mathcal S_{\Omega, \eps}$ of all the sequences $\{ \la_u, v_u\}_J$ in $\mathcal S$ satisfying all the conclusions of Theorem~\ref{thm:bounddir2} for fixed $\Omega$ and $\eps$.

\begin{thm} \label{thm:boundinv2}
Let $\Omega > 0$ and $\eps > 0$ be fixed. Then, any sequence $\{ \la_u, v_u \}_J$ of $\mathcal S_{\Omega, \eps}$ is the spectral data of the boundary value problem \eqref{eqv2}--\eqref{bc2} for a unique $Q \in \mathcal P$. Moreover, $Q \in \mathcal P_R$, where $R > 0$ depends only on $\Omega$ and $\eps$.
\end{thm}
 
Let $\mathscr P = \{ J_s \}_{s \ge 1}$ be a partition of the set \eqref{defJ} satisfying Definition~\ref{def:Js}. For $j \in \{ 1, 2, \dots, m \}$, denote by $\be_{jj}(J_s)$ and $\tilde \be_{jj}(J_s)$ the diagonal elements at the position $(j,j)$ of the matrices $\be(J_s)$ and $\tilde \be(J_s)$ defined by \eqref{sumbe}, respectively. 
Analogously to $\zeta(J_s)$ and $Z$ given by \eqref{defzeta} and \eqref{defZ}, respectively, introduce the following notations for $j = \overline{1,m}$: 
\begin{gather} \nonumber
\zeta_j(J_s) := \sum_{u \in J_s} |\rho_u - \tilde \rho_u| +  \sum_{u,v \in J_s} \bigl( |\rho_u - \rho_v| + |\tilde \rho_u - \tilde \rho_v| \bigr) + |\be_{jj}(J_s) - \tilde \be_{jj}(J_s)|, \\ \label{defZj}
Z_j(\mathscr P, \mathscr S, \tilde{\mathscr S}) := \| \{ s \zeta_j(J_s) \}_{s \ge 1} \|_{l_2}.
\end{gather}

Let us formulate the uniform stability theorem for the Sturm-Liouville inverse problem on the star-shaped graph.

\begin{thm} \label{thm:unistab2}
Let the spectral data $\mathscr S = \{ \la_u, v_u \}_J$ and $\tilde{\mathscr S} = \{ \tilde \la_u, \tilde v_u \}_J$ of two problems of form \eqref{eqvg}--\eqref{mc} with potentials $\{ q_j \}_{j = 1}^m$ and $\{ \tilde q_j \}_{j = 1}^m$ of $L_2(0,\pi)$, satisfying \eqref{zeroqj}, lie in $\mathcal S_{\Omega, \eps}$. Then
\begin{equation} \label{unistab2}
\| q_j - \tilde q_j \|_{L_2} \le C(\Omega, \eps) Z_j(\mathscr P, \mathscr S, \tilde{\mathscr S}), \quad j = \overline{1,m},
\end{equation}
for any partition $\mathscr P$ satisfying Definition~\ref{def:Js}.
\end{thm}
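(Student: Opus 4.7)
The plan is to mirror the strategy used to prove Theorem~\ref{thm:unistab} (via Lemma~\ref{lem:estE0}) with two modifications adapted to the graph setting: the reconstruction formula is the one for the Dirichlet problem \eqref{eqv2}--\eqref{bc2} established in \cite{Bond20-ipse, Bond20-amp}, and all estimates are carried out componentwise so that each diagonal entry $q_j - \tilde q_j$ is controlled by the partition quantity $Z_j$ rather than by the full $Z$.

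First, I would invoke Theorem~\ref{thm:boundinv2} to place both potentials in a common ball: $Q, \tilde Q \in \mathcal P_R^d$ with $R = R(\Omega,\eps)$. Since $Q$ and $\tilde Q$ are diagonal, the matrix solution $\vv(x,\la)$ defined by \eqref{icsin} decouples as $\vv(x,\la) = \mbox{diag}\{\vv_j(x,\la)\}_{j=1}^m$, where $\vv_j$ is the sine-type scalar solution for the $j$-th equation, and analogously for $\tilde \vv$. I then use the graph analog of Proposition~\ref{prop:relQhH}, $Q(x) - \tilde Q(x) = -2 E_0'(x)$, with
\begin{equation*}
E_0(x) := \sum_{n \ge 1, \, k = \overline{1,m}} \bigl(\vv(x, \la_{nk}) \be_{nk} \tilde \vv^*(x, \la_{nk}) - \vv(x, \tilde \la_{nk}) \tilde \be_{nk} \tilde \vv^*(x, \tilde \la_{nk})\bigr),
\end{equation*}
where $\be_{nk}$ is taken from \eqref{defvnk} and the series converges in $W_2^1([0,\pi]; \mathbb C^{m \times m})$. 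Because $\vv$ and $\tilde \vv$ are diagonal, only the diagonal entries of $\be_{nk}$ and $\tilde \be_{nk}$ enter $(E_0)_{jj}$, so
\begin{equation*}
(E_0)_{jj}(x) = \sum_{n,k} \bigl(\vv_j(x, \la_{nk})(\be_{nk})_{jj}\tilde \vv_j(x, \la_{nk}) - \vv_j(x, \tilde \la_{nk})(\tilde \be_{nk})_{jj}\tilde \vv_j(x, \tilde \la_{nk})\bigr),
\end{equation*}
and consequently $q_j(x) - \tilde q_j(x) = -2 \bigl((E_0)_{jj}(x)\bigr)'$.

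Second, I would prove the componentwise analog of Lemma~\ref{lem:estE0}: $\|(E_0)_{jj}\|_{W_2^1} \le C(R) Z_j$. Following the proof of that lemma, I regroup the series according to the partition $\{J_s\}_{s \ge 1}$ and telescope within each $J_s$, writing $\vv_j(x, \la_i(J_s)) = \vv_j(x, \la_1(J_s)) + \bigl(\vv_j(x, \la_i(J_s)) - \vv_j(x, \la_1(J_s))\bigr)$ and similarly for $\tilde \vv_j$, $\tilde \la_i$. Schwarz's Lemma together with the uniform bounds from Theorem~\ref{thm:bounddir2} estimates each telescoping difference by $C|\rho_i(J_s) - \rho_1(J_s)|$ or $C|\rho_1(J_s) - \tilde \rho_1(J_s)|$, while the leading term carries exactly the diagonal coefficient difference $(\be(J_s))_{jj} - (\tilde \be(J_s))_{jj}$, matching the definition \eqref{defZj} of $\zeta_{sj}$. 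Summing the purely trigonometric part as a sine series whose coefficients are controlled in $l_2$ by $Z_j$, and bounding the remainder uniformly absolutely, reproduces the final step of Lemma~\ref{lem:estE0} and yields $\|((E_0)_{jj})'\|_{L_2} + \|(E_0)_{jj}\|_{C[0,\pi]} \le C(R) Z_j$. Combining with the reconstruction formula gives \eqref{unistab2}.

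The main obstacle will be the bookkeeping for the two different normalizations in \eqref{defvnk}, namely $(n - \tfrac12)^2$ for $k=1$ versus $n^2$ for $k \ge 2$, together with the two branches of the asymptotics \eqref{asymptla2}. When a single group $J_s$ mixes indices $k=1$ and $k \ge 2$, one must verify that the Schwarz-type estimates on $\vv_j(x,\la) - \vv_j(x,\mu)$, combined with the growing prefactors inside $\be_{nk}$, produce precisely $|\rho_i(J_s) - \rho_1(J_s)|$ rather than a larger quantity, so that only the partition data $\zeta_{sj}$ enter the final bound. The diagonality of $\vv$, which crucially fails for the general matrix problem of Section~\ref{sec:main}, is exactly what turns the matrix stability estimate of Theorem~\ref{thm:unistab} into the per-component refinement with $Z_j$ in place of $Z$.
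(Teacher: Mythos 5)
Your proposal follows essentially the same route as the paper: the paper proves Theorem~\ref{thm:unistab2} by combining the uniform boundedness of Theorem~\ref{thm:boundinv2} with the reconstruction formula \eqref{difQ} from \cite{Bond20-ipse}, observing that diagonality of $Q$ and $\tilde Q$ makes $\vv(x,\la)$ diagonal so that \eqref{difqj} isolates $q_j - \tilde q_j$ in terms of the diagonal entries $\be_{nkj}$, and then repeating the grouping/telescoping argument of Lemma~\ref{lem:estE0} componentwise to obtain the bound by $Z_j$. Your treatment is in fact somewhat more explicit than the paper's sketch (which simply says the estimate is obtained ``similarly to Lemma~\ref{lem:estE0}''), and your flagged concern about the $(n-\tfrac12)^2$ versus $n^2$ normalizations in \eqref{defvnk} is the right technical point to check but does not change the method.
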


It is worth noting that the values $Z_j$ depend only on the diagonal elements of the matrices $\be_u$ and $\tilde \be_u$. 
However, the constant $C$ in the estimate \eqref{unistab2} depends on the constants $\Omega$ and $\eps$ that participate in the constraints \eqref{boundabove} and \eqref{RBbound} involving complete vectors $v_u$ and $\tilde v_u$ (i.e., also depending on non-diagonal elements of $\be_u$ and $\tilde \be_u$). Clearly, $Z_j < \infty$ ($j = \overline{1,m}$) for the partition $J_{2n-1} := \{ (n,1) \}$, $J_{2n} := \{ (n,k) \}_{k = 2}^m$ by virtue of the asymptotics \eqref{asymptla2} and \eqref{asymptbe2}. Analogously to Theorem~\ref{thm:unistab}, one can formulate the uniform stability of the recovery of arbitrary matrix potential $Q \in \mathcal P$ (not necessarily diagonal) from the spectral data.

Theorems~\ref{thm:boundinv2} and~\ref{thm:unistab2} are proved by using a linear main equation analogous to \eqref{main}. Let us outline the construction of the main equation for the boundary value problem $\mathscr L = \mathscr L(Q)$ of form \eqref{eqv2}--\eqref{bc2} with $Q \in \mathcal P$ by repeating the arguments of Section~\ref{sec:maineq}.

Introduce the model problem $\tilde{\mathscr L} := \mathscr L(0_m)$. It has the spectral data
\begin{gather*}
    \tilde \la_{nk} = \tilde \rho_{nk}^2, \quad \tilde \rho_{n1} = n-\tfrac{1}{2}, \quad \tilde \rho_{nk} = n, \: k = \overline{2,m}, \quad n \ge 1, \\
    \tilde \al_{n1} = \tilde \be_{n1} = \tfrac{2}{\pi} \left( n - \tfrac{1}{2}\right)^2 T, \quad \tilde \al_{nk} = \tilde \be_n = \tfrac{2}{\pi} n^2 T^{\perp}, \: k = \overline{2,m}, \quad n \ge 1.
\end{gather*}
For brevity, denote
$$
\tilde \rho_n := \tilde \rho_{nk}, \quad \tilde \la_n = \tilde \la_{nk}, \: k \ge 2, \quad \hat \rho_{nk} := \rho_{nk} - \tilde \rho_{nk}, \: k = \overline{1,m}, \quad n \ge 1.
$$

According to the initial conditions \eqref{icsin}, we have
\begin{equation} \label{vvtsin}
\tilde \vv(x, \rho^2) = \frac{\sin \rho x}{\rho} I_m.
\end{equation}
In addition, define the function $\tilde D(x, \mu, \la)$ by \eqref{defDt} using \eqref{vvtsin}. Then, the following relation holds (see \cite[Lemma~3.2]{Bond20-ipse}):
\begin{multline} \label{sumrelvv2}
\tilde \vv(x, \la) = \vv(x, \la) + \sum_{l = 1}^{\infty} \Biggl( \sum_{s = 1}^m \vv(x, \la_{ls}) \be_{ls} \tilde D(x, \la_{ls}, \la) \\ - \vv(x, \tilde\la_{l1}) \tilde \be_{l1} \tilde D(x, \tilde \la_{l1}, \la) - \vv(x, \tilde \la_l) \tilde \be_l \tilde D(x, \tilde \la_l, \la) \Biggr),
\end{multline}
where the series converges with brackets absolutely and uniformly by $x \in [0,\pi]$ and $\la$ on compact sets.

Introduce the matrix functions $w_{nk}(x, \rho)$, $\tilde w_{nk}(x, \rho)$, and $\tilde W_{nk}(x, \theta, \rho)$ analogously to $w_n(x, \rho)$, $\tilde w_n(x, \rho)$, and $\tilde W_n(x, \theta, \rho)$ given by \eqref{defwn}--\eqref{defWnt}, respectively, by replacing $\tilde \rho_n$ with $\tilde \rho_{nk}$ and using $\vv(x, \la)$ satisfying \eqref{icsin}. For $n \ge 1$, denote
\begin{gather*}
\psi_{nk}(x) := n w_{nk}(x, \rho_{nk}), \quad k = \overline{1,m}, \\
\psi_{n,m+1}(x) := n \vv(x, \tilde \la_{n1}), \quad \psi_{n,m+2}(x) := n \vv(x, \tilde \la_n).
\end{gather*}
Analogously, define $\tilde \psi_{nk}$ by using $\tilde w_{nk}$ and $\tilde \vv$. From \eqref{sumrelvv2}, we derive the system
\begin{equation} \label{sys2}
\tilde \psi_{nk}(x) = \psi_{nk}(x) + \sum_{l = 1}^{\infty} \sum_{s = 1}^{m+2} \psi_{ls}(x) \tilde{\mathcal R}_{ls,nk}(x), \quad n \ge 1, \, k = \overline{1,m+2},
\end{equation}
where
\begin{align*}
\tilde{\mathcal R}_{ls,nk}(x) & := n l^{-1}\hat \rho_{ls} \be_{ls} \tilde W_{nk}(x, \rho_{ls}, \rho_{nk}), \quad s,k = \overline{1,m}, \\
\tilde{\mathcal R}_{ls,n,m+1}(x) & := n l^{-1} \hat \rho_{ls} \tilde D(x, \la_{ls}, \tilde \la_{n1}), \quad s = \overline{1,m}, \\
\tilde{\mathcal R}_{ls,n,m+2}(x) & := n l^{-1} \hat \rho_{ls} \be_{ls} \tilde D(x, \la_{ls}, \tilde \la_n), \quad s = \overline{1,m}, \\
\tilde{\mathcal R}_{l,m+1,nk}(x) & := n l^{-1} \bigl( \be_{l1} \tilde W_{nk}(x, \rho_{l1}, \rho_{nk}) - \tilde \be_{l1} \tilde W_{nk}(x, \tilde \rho_{l1}, \rho_{nk})\bigr), \quad k = \overline{1,m}, \\
\tilde{\mathcal R}_{l,m+1,n,m+1}(x) & := n l^{-1} \bigl(\be_{l1} \tilde D(x, \la_{l1}, \tilde \la_{n1}) - \tilde \be_{l1} \tilde D(x, \tilde \la_{l1}, \tilde \la_{n1})\bigr), \\
\tilde{\mathcal R}_{l,m+1,n,m+2}(x) & := n l^{-1} \bigl(\be_{l1} \tilde D(x, \la_{l1}, \tilde \la_{n}) - \tilde \be_{l1} \tilde D(x, \tilde \la_{l1}, \tilde \la_{n})\bigr), \\
\tilde{\mathcal R}_{l,m+2,nk}(x) & := n l^{-1} \biggl( \sum_{s = 2}^m \be_{ls} \tilde W_{nk}(x, \rho_{ls}, \rho_{nk}) - \tilde \be_l \tilde W_{nk}(x, \tilde \rho_{ls}, \rho_{nk}) \biggr), \\
\tilde{\mathcal R}_{l,m+2,n,m+1}(x) & := n l^{-1} \biggl( \sum_{s = 2}^m \be_{ls} \tilde D(x, \la_{ls}, \tilde \la_{n1}) - \tilde \be_l \tilde D(x, \tilde \la_{ls}, \tilde \la_{n1}) \biggr), \\
\tilde{\mathcal R}_{l,m+2,n,m+2}(x) & := n l^{-1} \biggl( \sum_{s = 2}^m \be_{ls} \tilde D(x, \la_{ls}, \tilde \la_{n}) - \tilde \be_l \tilde D(x, \tilde \la_{ls}, \tilde \la_{n}) \biggr).
\end{align*}

The multipliers $n$ and $n l^{-1}$ in the definitions of $\psi_{nk}$, $\tilde \psi_{nk}$ and $\tilde{\mathcal R}_{ls,nk}$, respectively, are added for achieving the estimates \eqref{estpsi} and \eqref{estpsiRt} for these matrix functions with 
$$
\xi_n := |\rho_{n1} - \tilde \rho_{n1}| + \sum_{k = 2}^m |\rho_{nk} - \tilde \rho_n| + \| \be_{n1} - \tilde \be_{n1} \| + \| \be_n - \tilde \be_n \|, \quad \{ n \xi_n \}_{n \ge 1} \in l_2.
$$

Thus, the system \eqref{sys2} can be interpreted as a linear equation in the Banach space of bounded infinite matrix sequences $a = \{ a_{nk} \}_{n \ge 1, \, k = \overline{1,m+2}}$, $a_{nk} \in \mathbb C^{m \times m}$, with the norm $\| a \| = \sup_{n,k} \| a_{nk} \|$. We obtain the main equation of the same form as \eqref{main} and the result analogous to Theorem~\ref{thm:maineq} on its solvability, which allows us to prove Theorem~\ref{thm:boundinv2}.

For proving the uniform stability, consider two problems $\mathscr L$ and $\tilde{\mathscr L}$ with potentials $Q, \tilde Q \in \mathcal P$, where $\tilde Q$ is not necessarily zero. By virtue of \cite[Theorem~3.6]{Bond20-ipse}, the relation analogous to \eqref{relQhH}, \eqref{defE0} holds for the potentials:
\begin{equation} \label{difQ}
Q(x) - \tilde Q(x) = -2 \frac{d}{dx} \left( \sum_{n = 1}^{\infty} \sum_{k = 1}^m \bigl(\vv(x, \la_{nk}) \be_{nk} \tilde \vv^*(x, \la_{nk}) - \vv(x, \tilde \la_{nk}) \tilde \be_{nk} \tilde \vv^*(x, \tilde \la_{nk})\bigr)\right),
\end{equation}
where $\vv(x, \la)$ is specified by the initial conditions \eqref{icsin} and the numbers $\be_{nk}$ are defined by \eqref{defvnk}.

If the matrix potential $Q$ is diagonal, then so is the solution $\vv(x, \la)$. For $j = \overline{1,m}$, denote by $\vv_j(x, \la)$ and $\be_{nkj}$ the diagonal elements at the position $(j,j)$ of the matrices $\vv(x, \la)$ and $\be_{nk}$, respectively. If $Q, \tilde Q \in \mathcal P^d$, then the relation \eqref{difQ} implies
\begin{equation} \label{difqj}
q_j(x) - \tilde q_j(x) = -2 \frac{d}{dx} \left( \sum_{n = 1}^{\infty} \sum_{k = 1}^m \bigl(\be_{nkj} \vv_j(x, \la_{nk}) \tilde \vv_j(x, \la_{nk}) - \tilde \be_{nkj} \vv_j(x, \tilde \la_{nk}) \tilde \vv_j(x, \tilde \la_{nk})\bigr)\right)
\end{equation}
for $j = \overline{1,m}$. The relation \eqref{difqj} can be used to obtain the estimate \eqref{unistab2} similarly to Lemma~\ref{lem:estE0}.

\medskip

{\bf Acknowledgments.} This work was supported by Grant 24-71-10003 of the Russian Science Foundation, https://rscf.ru/en/project/24-71-10003/.

\medskip

\noindent Natalia Pavlovna Bondarenko \\

\noindent 1. Department of Mechanics and Mathematics, Saratov State University, 
Astrakhanskaya 83, Saratov 410012, Russia, \\

\noindent 2. S.M. Nikolskii Mathematical Institute, RUDN University, 6 Miklukho-Maklaya St, Moscow, 117198, Russia, \\

\noindent 3. Moscow Center of Fundamental and Applied Mathematics, Lomonosov Moscow State University, Moscow 119991, Russia.\\

\noindent e-mail: {\it bondarenkonp@sgu.ru}

\end{document}